\def\A{\overline{A}}
\def\q{\mathfrak q}
\def\p{\mathfrak p}
\def\Proj{\operatorname {Proj}}
\def\Q{\mathfrak Q}
\def\P{\mathfrak P}
\def\Quot{\operatorname{Quot}}
\def\m{\mathfrak m}
\def\R{\mathcal R}
\def\G{\mathcal G}
\def\rank{\operatorname{rank}}
\def\fakeht{\vphantom{E^{E_E}_{E_E}}}
\def\core{\operatorname{core}}
\def\End{\operatorname{End}}
\newtheorem{theorem}{Theorem}[section]
\newtheorem{corollary}[theorem]{Corollary}
\newtheorem{proposition}[theorem]{Proposition}
\newtheorem{claim}[theorem]{Claim}
\newtheorem{observation}[theorem]{Observation}
\newtheorem*{Theorem}{Theorem}
\theoremstyle{definition}
\newtheorem{definition}[theorem]{Definition}
\newtheorem{remark}[theorem]{Remark}
\newtheorem*{Remark}{Remark}
\newtheorem{remarks}[theorem]{Remarks}
\newtheorem{data}[theorem]{Data}
\newtheorem{notation}[theorem]{Notation}
\newtheorem{chunk}[theorem]{}
\newtheorem*{proof1}{Proof of Theorem \ref{Thm.2}}
\newtheorem*{proof2}{Proof of Example \ref{Ex8}}
\numberwithin{equation}{theorem}
\DeclareMathOperator*{\cart}{\times}
\begin{document}

\baselineskip=16pt

\title[Blowups and fibers of morphisms]{\bf Blowups and fibers of morphisms}
\date\today

\author[Andrew R. Kustin, Claudia Polini, and Bernd Ulrich]
{Andrew R. Kustin, Claudia Polini, and Bernd Ulrich}

\thanks{AMS 2010 {\em Mathematics Subject Classification}.
Primary 13A30, 13H15, 14A10; Secondary 13D02,  14E05, 14F18.}

\thanks{The first author was partially supported by the the Simons Foundation.
The second author was partially supported by the NSF.
The last author was partially supported by the NSF and as a Simons Fellow.}


\thanks{Keywords: Adjoint ideal, birational map, blowup algebra, canonical module, core, degree,  morphism, multiplier ideal, minimal multiplicity, multiplicity, Rees ring,  Serre condition, special fiber ring}

\address{Department of Mathematics, University of South Carolina,
Columbia, SC 29208} \email{kustin@math.sc.edu}

\address{Department of Mathematics, 
University of Notre Dame
Notre Dame, IN 46556} \email{cpolini@nd.edu}

\address{Department of Mathematics,
Purdue University,
West Lafayette, IN 47907}\email{ulrich@math.purdue.edu}

\begin{abstract} Our object of study is a rational map $\Psi:\xymatrix{\mathbb P_k^{s-1}\ar@{-->}[r]&\mathbb P_k^{n-1}}$ defined by homogeneous forms $g_1,\dots,g_n$, of the same degree $d$, in  the homogeneous coordinate ring $R=k[x_1,\dots,x_s]$ of $\mathbb P_k^{s-1}$.
Our goal is to relate
properties of $\Psi$,  of the homogeneous coordinate ring $A=k[g_1,\dots,g_n]$ of the variety parametrized by $\Psi$, and of the Rees algebra $\R(I)$, the bihomogeneous coordinate ring  of the graph of $\Psi$.
For a regular map $\Psi$, for instance, we prove that $\R(I)$ satisfies Serre's  condition $R_i$, for some $i>0$,  if and only if $A$ satisfies $R_{i-1}$ and $\Psi$ is birational onto its image.
Thus, in particular, $\Psi$ is birational onto its image if and only if
$\R(I)$ satisfies $R_1.$ Either condition has implications for the shape of the {\it core},  namely,
 ${\rm core}(I)$ is the multiplier ideal of $I^s$ and ${\rm core}(I)=(x_1,\ldots,x_s)^{sd-s+1}.$ Conversely, for $s=2$, either equality for the core implies birationality.
In addition, by means of the generalized rows of the syzygy matrix of $g_1,\dots,g_n$, we give an explicit method to reduce the
non-birational case to the birational one when $s=2$.
\end{abstract}

\maketitle

\section{Introduction.}

Let $k$ be a field. We investigate rational maps $\Psi:\xymatrix{\mathbb P_k^{s-1}\ar@{-->}[r]&\mathbb P_k^{n-1}.}$ Such a map $\Psi$ is defined by homogeneous forms $g_1,\dots,g_n$, of the same degree $d$, in  the homogeneous coordinate ring $R=k[x_1,\dots,x_s]$ of $\mathbb P_k^{s-1}$. 
One of our main goals is to determine necessary and sufficient conditions for $\Psi$ to be birational onto its image. 
This problem has been studied extensively; see, for example, Hacon \cite{H03} for a technique that uses the non-vanishing theorem of Koll\'ar \cite{K93}. 
The advantage of having convenient criteria for determining if a given parameterization is birational onto its image is quite obvious. Papers about parameterization of curves and surfaces are now ubiquitous (see, for example \cite{BD,BD13,BBC,bu,BB,BD'A,C06,CWL,CHW,CKPU,HSV,HSV12,BSSA,SCG}). It is much better to have ``the parameterization is birational'' as a conclusion, rather than as a hypothesis. 

In this paper we employ the syzygies of the forms 
$[g_1,\dots,g_n]$ to determine if the rational map $\Psi=[g_1:\dots:g_n]: \xymatrix{\mathbb P^{s-1}\ar@{-->}[r]& \mathbb P^{n-1}}$
is birational. For $s=n$ this approach appears  already in the work of Hulek, Katz, and Schreyer \cite{HKS}, 
and for $n\ge s$ it has been further developed in \cite{RS}.
In \cite{Si, DHS}  the method has been advanced 
by emphasizing
the role of the Rees algebra associated to the ideal $I=(g_1,\dots,g_n)$ of $R$. The Rees algebra $\R(I)$ gives the bi-homogeneous coordinate ring of the graph of $\Psi$, whereas, the subalgebra  $A=k[g_1,\dots,g_n]$ of $R$ is the homogeneous coordinate ring of the image of $\Psi$. In fact, $A$  is isomorphic to the special fiber ring $\mathcal F(I)=\R(I)/\m\R(I)$ for $\m$ equal to the maximal homogeneous ideal $\m=(x_1,\dots,x_s)$ of $R$. The rings $\R(I)$ and $\mathcal F(I)$ are known as {\it blow-up rings} associated to $I$. In this paper we relate geometric properties of $\Psi$, algebraic information about the homogeneous coordinate ring 
$A$ of the image, and the bi-homogeneous coordinate ring $\R(I)$  of the graph.

If $\Psi$ is a morphism (that is, if $I$ is primary to the maximal homogeneous ideal $\m$ of $R$), then the degree of $\Psi:\mathbb P_k^{s-1}\to \operatorname{Im}\Psi$ is equal to $d^{s-1}/e(A)$, where $e(A)$ is the multiplicity of the standard graded $k$-algebra $A$. We prove, for instance, that 
$\R(I)$ satisfies Serre's  condition $R_i$, for some $i>0$, if and only if $A$ satisfies $R_{i-1}$ and $\Psi$ is birational onto its image (that is, $\Psi:\mathbb P_k^{s-1}\to \operatorname{Im}\Psi$ has degree $1$). Thus, in particular, $\Psi$ is birational onto its image if and only if $\R(I)$ satisfies $R_1. $ Furthermore, $\Psi:\mathbb P_k^{s-1}\to \operatorname{Im}\Psi$ is a birational morphism with a  smooth  image if and only if the Rees ring $\R(I)$ has an isolated singularity. Even if the rational map $\Psi$ is not a morphism, if the dimension of $\operatorname{Im}\Psi$ is $s-1$ (that is, if the Krull dimension of $A$ is $s$), then the degree of $\Psi:\mathbb P_k^{s-1}\to \operatorname{Im}\Psi$  is equal to the multiplicity of the local ring $\R(I)_{\m\R(I)}$.
Moreover, we are able to relate the degree of the map $\Psi,$ the degree of the image, and the $j$-multiplicity of the ideal $I$. Results of this type were obtained before by Simis, Ulrich and Vasconcelos \cite{SUV}, Validashti \cite{Va}, Xie \cite{Xie} and by Jeffries, Montano, and Varbaro \cite{JMV}.  

The $j$-multiplicity is a generalization of the classical Hilbert-Samuel multiplicity that applies to ideals that are not necessarily zero-dimensional. The notion was introduced by
Achilles and Manaresi \cite{AM} and has found applications in intersection theory and equisingularity theory. It is interesting to find formulas for the $j$-multiplicity of classes 
of ideals \cite{NU, JM, JMV}. Our formula serves this purpose if the degree of the map and of its image are known. Conversely, we obtain the
degree of the image of  a rational map if the $j$-multiplicity can be computed, for instance by using residual intersection techniques (see, e.g. \cite{KPUGor}). We also express the degree of certain dual varieties 
in terms of the  $j$-multiplicity of Jacobian ideals.

The starting point for our investigation is the Eisenbud-Ulrich interpretation \cite{EU} of the fibers of the morphism $\Psi$ over the point $p$ in $\mathbb P^{n-1}$ in terms of the corresponding generalized row of the homogeneous syzygy matrix for $[g_1,\dots,g_n]$. This technique is explained and extended in Section~\ref{EU}; it also plays a significant role in \cite{CKPU}.

 In Section~\ref{Repar} we prove an algebraic analogue of a consequence of
Hurwitz' theorem. Let $r$ be the degree of the morphism 
$\Psi:\xymatrix{{\mathbb P}_k^{1}\ar@{->>}[r]&{\rm Im}(\Psi),}$
which at the level of coordinate
rings corresponds to the embedding $A
\hookrightarrow k[R_d] $. Thus $r$ is the degree of the field
extension ${\rm Quot}(A) \subset {\rm Quot}(
k[R_d] )$. 
 We 
show that there exist
homogeneous forms $f_1, f_2$ of degree $r$ in $R$ such that the entries
of the matrix $\varphi$ are homogeneous polynomials in the variables
$f_1$ and $f_2$. In particular, the ideal $I$ is extended from an ideal
in $k[f_1, f_2]$. Thus replacing $k[x_1,x_2]$ by $k[f_1, f_2]$ we can reduce the
non-birational case to the birational one.
Furthermore, we provide an explicit description of $f_1$ and $f_2$ in
terms of $\varphi$. 
If $q_1$ and $q_2$ are general points  in $\mathbb P_k^1$, then   $f_i=\operatorname{gcd}(I_1(p_i\varphi))$ with $p_i=\Psi(q_i)$.
This method gives an efficient algorithm for reparameterizing the rational map $\Psi$. 

In Section~\ref{Reesrings} we relate the birationality of $\Psi$ to the shape of  $\core(I)$, the core of $I$,  the
intersection of all reductions of $I$. Since reductions, even
minimal ones, are highly non-unique, one uses the core to encode
information about all of them.  The concept was introduced by Rees and Sally \cite{18}, and has
been studied further by Huneke and Swanson, by Corso, Polini, and Ulrich,  by Polini and Ulrich, and by Huneke and Trung  \cite{11,CPU1,CPU2,4,PU,HT}. The core appears naturally in the
contexts of Brian\c con-Skoda theorems that compare the integral
closure filtration with the adic filtration of an ideal \cite{BS, L}. Another aspect that makes the core very appealing is its connection 
 to adjoints and multiplier ideals, and, as
discovered by Hyry and Smith, to Kawamata's conjecture on the
non-vanishing of sections of certain line bundles \cite{14,14.5}.  We prove  that if $\Psi$ is birational
onto its image then ${\rm core}(I)$ is the adjoint ideal of $I^s$ and 
${\rm core}(I)=(x_1,\ldots,x_s)^{sd-s+1}.$ The converse of this statement holds for $s=2$. Indeed, 
if $\operatorname{Im}\Psi$ is a curve, $\Psi$ is a morphism, and $\varphi$ is a homogeneous Hilbert-Burch matrix for the row vector $[g_1,\dots,g_n]$, then in \ref{C3} we prove the following result.

\begin{Theorem} 
Statements {\rm(1)\,--\,(8)} are equivalent.
\begin{itemize}
\item[\rm(1)] The morphism  $\, \Psi$ is birational onto its image.
\item[\rm(2)] The Rees ring $\R(I)$ satisfies Serre's condition $(R_1)$. 
\item[\rm(3)] One has the equality of canonical modules $\omega_{\R(I)}= \omega_{\R(\m^d)}$.
\item[\rm(4)]  One has the equality of endomorphism rings  $\End_{\R(I)}(\omega_{\R(I)})=  \End_{\R(\m^d)}(\omega_{\R(\m^d)})$.
\item[\rm(5)] $e(A)=d$.
\item[\rm(6)] ${\rm core}(I)=\m^{2d-1}$.
\item[\rm(7)] One has the equality of the core and an adjoint ${\rm core}(I)={\rm
 adj}(I^{2})$.
 \item[\rm(8)] The ideal ${\rm core}(I)$ is integrally closed.
\end{itemize}
  Furthermore, statements {\rm(1)\,--\,(8)} are all  implied by
\begin{enumerate}
\item[{\rm (9)}]  $\gcd($column degrees of $\varphi)=1$.
\end{enumerate}
\end{Theorem}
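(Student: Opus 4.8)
The plan is to assemble the theorem from three results already recorded in the introduction---the degree formula $\deg\Psi=d^{s-1}/e(A)$ for a morphism, the Serre-condition criterion that $\R(I)$ satisfies $R_1$ if and only if $A$ satisfies $R_0$ and $\Psi$ is birational, and the birational core computation $\core(I)=\operatorname{adj}(I^s)=\m^{sd-s+1}$---together with the reparametrization of Section~\ref{Repar}. I would run the cycle $(1)\Rightarrow(6)\Rightarrow(7)\Rightarrow(1)$ alongside the cluster $(1)\Leftrightarrow(2)\Leftrightarrow(3)\Leftrightarrow(4)\Leftrightarrow(5)$, and treat $(8)\Rightarrow(1)$ separately. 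The elementary equivalences come first: with $s=2$ the degree formula reads $\deg\Psi=d/e(A)$, and $\Psi$ is birational exactly when this degree is $1$, which gives $(1)\Leftrightarrow(5)$; and since $A=k[g_1,\dots,g_n]\subseteq R$ is a domain it satisfies $R_0$ automatically, so the Serre-condition criterion with $i=1$ reduces to $(1)\Leftrightarrow(2)$.

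For the canonical-module equivalences I would first identify $\R(\m^d)$ as the normalization of $\R(I)$. Since $I$ is $\m$-primary and generated in degree $d$ one has $\overline I=\m^d$, hence $\overline{I^n}=\m^{dn}$ and $\overline{\R(I)}=\bigoplus_n\overline{I^n}t^n=\R(\m^d)$, a normal ring because $\m^d$ is a normal ideal of the regular ring $R$. Thus $\R(I)\hookrightarrow\R(\m^d)=\overline{\R(I)}$ is a finite birational extension of three-dimensional domains. As the canonical module is $S_2$ and hence determined in codimension one, $\omega_{\R(I)}=\omega_{\R(\m^d)}$ holds if and only if $\R(I)$ agrees with its normalization in codimension one, that is, if and only if $\R(I)$ satisfies $R_1$; this is $(2)\Leftrightarrow(3)$. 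Likewise $\End_{\R(\m^d)}(\omega_{\R(\m^d)})=\R(\m^d)$ by normality, while $\End_{\R(I)}(\omega_{\R(I)})$ is the $S_2$-ification of $\R(I)$, which lies between $\R(I)$ and $\R(\m^d)$ and modifies $\R(I)$ only in codimension at least two; it reaches $\R(\m^d)$ exactly when $\R(I)$ is already $R_1$, giving $(2)\Leftrightarrow(4)$.

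Next, $(1)\Rightarrow(6)\Rightarrow(7)$. Assuming $(1)$, the birational core computation gives $\core(I)=\operatorname{adj}(I^2)=\m^{2d-1}$; applying it to the birational Veronese parametrization $\m^d$ of the rational normal curve gives $\core(\m^d)=\m^{2d-1}$; and the equality $\operatorname{gradedcore}(I)=\core(I)$ for $\m$-primary ideals in this two-dimensional graded setting supplies the remaining clause of $(6)$. Then $(6)\Rightarrow(7)$ is immediate, $\m^{2d-1}$ being integrally closed. For $(8)\Rightarrow(1)$ I would argue by contraposition: the column degrees $d_1,\dots,d_{n-1}$ of $\varphi$ sum to $d$, and in the non-birational case the reparametrization writes every entry of $\varphi$ as a polynomial in the forms $f_1,f_2$ of degree $r=\deg\Psi$; a nonzero form of degree $d_j$ that is a polynomial in two forms of degree $r$ forces $r\mid d_j$, so $r\mid\gcd(d_1,\dots,d_{n-1})$, and hence $\gcd(d_1,\dots,d_{n-1})=1$ forces $r=1$.

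The heart of the theorem is the converse $(7)\Rightarrow(1)$, which I would prove by contraposition and where I expect the main difficulty to lie. Suppose $\Psi$ is not birational, so $r:=\deg\Psi>1$ and $e:=e(A)=d/r$. The reparametrization produces forms $f_1,f_2$ of degree $r$ with $\gcd(f_1,f_2)=1$ such that, writing $S=k[f_1,f_2]$ and $\n=(f_1,f_2)S$, one has $I=I_0R$ for an ideal $I_0\subseteq S$ generated in degree $e$ and defining a birational map; the birational core computation in $S$ then gives $\core(I_0)=\n^{2e-1}$. The crux is the base-change identity $\core(I)=\core(I_0)R=(f_1,f_2)^{2e-1}R$: establishing that the formation of the core commutes with the finite free (hence faithfully flat) extension $S\hookrightarrow R$, with the complete-intersection---hence Gorenstein---closed fiber $R/\n R$ as the essential geometric input, is the step I anticipate to be hardest. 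Granting it, the proof concludes quickly: from $(f_1,f_2)R\subseteq\m^r$ and $e((f_1,f_2)R)=r^2=e(\m^r)$, Rees' theorem yields $\overline{(f_1,f_2)R}=\m^r$, so $\overline{\core(I)}=\m^{r(2e-1)}=\m^{2d-r}$, and a colength comparison shows $(f_1,f_2)^{2e-1}R\subsetneq\m^{2d-r}$ whenever $r>1$; thus $\core(I)$ is not integrally closed, contradicting $(7)$.
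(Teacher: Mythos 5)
Your proposal is correct and follows essentially the same route as the paper: the cluster (1)--(5) via the multiplicity/degree formula and the identification of $\R(\m^d)$ as the normalization of $\R(I)$ (with the canonical module and its endomorphism ring detecting agreement in codimension one), the implication (1)$\Rightarrow$(6)$\Rightarrow$(7) from the birational core computation, the converse from integral closedness via the reparameterization formula $\core(I)=(f_1,f_2)^{2d/r-1}$ (whose key base-change step, flat descent with Gorenstein fibers, you correctly isolate as the crux, matching Theorem~\ref{core}), and (8)$\Rightarrow$(1) from $r$ dividing the column degrees. Your finish of the converse by comparing $\core(I)$ with its integral closure $\m^{2d-r}$ is the same count the paper performs with minimal numbers of generators ($2d/r$ versus $2d-r+1$).
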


We highlight the fact that the integral closedness of the core of $I$, which is a single graded component of the canonical module $\omega_{\R(I)}$, forces the shape of the entire canonical module. Also, we emphasize that these equivalent conditions may sometimes be read from numerical information about a homogeneous presentation matrix $\varphi$ for $I$ (that is, from information about the graded Betti numbers of the homogeneous ideal $I$ in the ring $R=k[x_1,x_2]$). In general, the sufficient condition (9) is far from necessary; however, if $I$ is generated by monomials, then condition (9) is equivalent to conditions (1)~--~(8).

The fact that the core can detect geometric properties was already apparent in the work of Hyry and Smith \cite{14,14.5} and in \cite{FPU}, where the Cayley-Bacharach property of $0$-dimensional schemes is  characterized  in
terms of the structure of the core of the maximal ideal of their homogeneous coordinate ring. The equality ${\rm core}(I)={\rm adj}(I^g)$ (where $g$ is the height of the ideal $I$)  has also been investigated by Hyry and Smith \cite{14,14.5} in their work on the conjecture of Kawamata. Adjoints of ideals in regular domains were introduced by Lipman \cite{L}, and in rings essentially of finite type over a field of characteristic zero they coincide with multiplier ideals, which play an important role in algebraic geometry due to their connection with vanishing theorems \cite{La}. For the core to be the adjoint of an ideal, it needs to be
integrally closed, which is
always the case if $\R(I)$ satisfies Serre's condition $R_1$ (see
\cite{PU}). Surprisingly, however,  in \ref{C3} the integral closeness of the core is sufficient to guarantee the equality between the core and the multiplier ideal.


\bigskip

\section{Notation, conventions, and preliminary results.}\label{Prelims}

\begin{chunk}\label{saturate}If $I$ and $J$ are ideals of a ring $R$, then the {\it saturation of $I$ with respect to $J$} is $I: J^{\infty}=\bigcup\limits_{i=1}^{\infty}(I: J^i)$. Recall that $I: J^{\infty}$ is obtained from $I$ by removing all primary components whose radical contain $J$. We write $\operatorname{gcd}$ to mean {\it greatest common divisor}. If $I$ is a homogeneous ideal in $k[x,y]$, then 
we denote the $\operatorname{gcd}$ of a generating set of $I$ by $\operatorname{gcd} (I)$; notice that this polynomial generates the saturation   $I: (x,y)^{\infty}$.
\end{chunk}

\begin{chunk}\label{Quot}If $R$ is a ring, then we write $\operatorname{Quot}(R)$ for the {\it total ring of quotients} of $R$; that is, $$\operatorname{Quot}(R)=U^{-1}R,$$ where $U$ is the set of non-zerodivisors on $R$. 
If $R$ is a domain, then the total  ring of quotients of $R$ is usually called the {\it quotient field} of $R$.
An extension of domains $A\subset B$ is called {\it birational} if $A$ and $B$ have the same quotient field. 

If $A\subset B$ is an extension of domains so that $\Quot(B)$ is algebraic over $\Quot(A)$, then 
we use the three notations $$[B:A], \quad\rank_AB, \quad \text{and}\quad[\Quot (B):\Quot (A)]$$ interchangeably. Indeed, in this situation,
\begin{equation}\label{quot}\Quot (A)\otimes_AB=\Quot(B).\end{equation}
The rank of $B$ as an $A$-module, denoted $\rank_AB$, is defined to be 
 the dimension of the left hand side  of (\ref{quot}) as a vector space over $\Quot(A)$. 
The dimension of the right side of (\ref{quot}), as a vector space over $\Quot(A)$,  is denoted $[\Quot (B):\Quot (A)]$.  
\end{chunk}

 \begin{chunk}\label{mult}
If $R$ is a Noetherian ring, $M$ is a finitely generated $R$-module of Krull dimension $s$, and $\mathfrak a$ is an ideal of $R$ with the Krull dimension of $R/\mathfrak a$ equal to zero, then {\it the multiplicity of the $R$-module $M$ with respect to the ideal $\mathfrak a$} is
$$e_{\mathfrak a}(M)=s!\lim_{n\to\infty}\frac{\lambda_R(M/\mathfrak a^n M)}{n^s},$$where $\lambda_R(\underline{\phantom{x}})$ represents the length of an $R$-module. If $R$ is local with maximal ideal $\mathfrak m$, then we often write $e(M)$ or $e_R(M)$ in place of $e_{\mathfrak m}(M)$. Similarly, if $R$ is a standard graded algebra over a field (see \ref{sga}) with maximal homogeneous ideal $\mathfrak m$, and $M$ is a graded $R$-module, then we often write $e(M)$ or $e_R(M)$ in place of  $e_{\mathfrak m}(M)$. 
In either case, we call the common value $e(M)=e_R(M)=e_{\mathfrak m}(M)$ the {\it multiplicity of the $R$-module $M$}; in particular, $e(R)$ means $e_R(R)$. 
\end{chunk}

 \begin{chunk}\label{sga} 
Let $R=\bigoplus_{0\le i}R_i$ be a  Noetherian graded algebra and $M$ be a finitely generated graded $R$-module. It follows that 
$$\lambda_{R_0}(M_n)= \lambda_R\left(\frac{\bigoplus_{n\le i}M_i}{\bigoplus_{n< i}M_i}\right).$$  This non-negative integer  is   the value of the  {\it Hilbert function}  of the $R$-module $M$ at $n$, denoted $\operatorname{HF}_M(n)$.

  Let $k$ be a field. The  graded algebra $R=\bigoplus_{0\le i}R_i$ is a {\it standard graded $k$-algebra} if $R_0$ is equal to $k$, $R_1$ is a finitely generated $R_0$-module, and $R$  is generated by $R_1$ as an algebra over $R_0$.  If $R$ is a standard graded $k$-algebra and $M$ is a finitely generated,  graded $R$-module with positive Krull dimension $s$, then  the multiplicity of $M$ may be expressed in terms of Hilbert functions: 
 $$e_R(M)=(s-1)!\lim\limits_{n\to \infty}\frac {\operatorname{HF}_M(n)}{n^{s-1}}.$$
\end{chunk}

\begin{chunk}\label{sg-lo}If $R$ is a standard graded $k$-algebra with maximal homogeneous ideal $\mathfrak m$, then
the multiplicity $e(R)$ of the standard graded $k$-algebra $R$ is equal to the multiplicity $e(R_{\mathfrak m})$ of the local ring $R_{\mathfrak m}$. 
\end{chunk}

\begin{chunk}\label{nagata} One important application of multiplicity is the following well-known theorem of Nagata \cite[40.6]{N}, see also \cite[Exer.~11.8, Exam.~11.1.11]{SH}. If $C$ is a Noetherian formally equidimensional local ring, then $e(C)=1$ if and only if $C$ is a regular local ring. When we apply this result, $C$ is a one-dimensional Noetherian local domain and the hypothesis that $C$ be formally equidimensional is auotmatically satisfied.
\end{chunk}
 
\begin{chunk}\label{assoc} We use the associativity formula for multiplicity, see for example, \cite[Cor.~4.6.8]{BH} or \cite[Thm.~11.2.4]{SH}. Let $R$  be a Noetherian  ring, $\mathfrak a$ be an ideal of $R$ with the Krull dimension of  $R/\mathfrak a$ equal to zero, and $M$ be a finitely generated $R$-module. Then
$$e_{\frak a}(M)=\sum_P \lambda_{R_P}(M_P)e_{\mathfrak a}(R/P),$$ where $P$ varies over the prime ideals in the support of $M$ with the property that the Krull dimension of  $R/P$ is equal to the Krull dimension of $M$. The proof makes use of a filtration of $M$ whose factors are cyclic modules defined by prime ideals of $R$.  \end{chunk}

The following result is elementary. See \cite[Prop.~6.1]{SUV} for a more sophisticated version and \cite[Cor.~4.6.9]{BH} for a local version. 
\begin{observation}\label{elem}  Assume $A\subseteq B$ is a module-finite extension of standard graded $k$-algebras which also are   domains. Then $e_B(B)=e_A(A)\operatorname{rank}_AB$.
\end{observation}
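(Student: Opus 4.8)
The plan is to compute the multiplicity $e_{\m_A}(B)$ of $B$ regarded as a module over $A$ in two different ways, where $\m_A$ and $\m_B$ denote the maximal homogeneous ideals of $A$ and $B$, respectively. Since $A\subseteq B$ is module-finite it is integral, so $\dim B=\dim A=:s$, and both rings are graded domains with $A_0=B_0=k$. I may assume $s>0$: if $s=0$, then $\m_A$ and $\m_B$ are nilpotent, which in a domain forces $A=B=k$, and the asserted formula reads $1=1$.

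The first step is to observe that $e_A(B)=e_B(B)$, purely by comparing Hilbert functions. Viewed as a finitely generated graded module over the standard graded $k$-algebra $A$, the module $B$ has Hilbert function $\operatorname{HF}_B(n)=\lambda_k(B_n)=\dim_k B_n$; this is the very same function that, viewing $B$ as a standard graded algebra over itself, computes $e_B(B)$. Since $\dim_A B=\dim B=s>0$, I apply the Hilbert-function formula of \ref{sga} twice — once with $(R,M)=(A,B)$ and once with $(R,M)=(B,B)$ — to get
$$e_A(B)=(s-1)!\lim_{n\to\infty}\frac{\dim_k B_n}{n^{s-1}}=e_B(B).$$

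The second step is to evaluate $e_A(B)=e_{\m_A}(B)$ via the associativity formula \ref{assoc}, taking $R=A$, $M=B$, and $\mathfrak a=\m_A$. The relevant primes $P$ are those in $\operatorname{Supp}_A(B)$ with $\dim A/P=\dim B=s$. Because $A$ is a domain of dimension $s$, the only prime with $\dim A/P=s$ is $P=(0)$; and $(0)\in\operatorname{Supp}_A(B)$ since $A\hookrightarrow B$ makes $B$ a faithful $A$-module. Hence the sum collapses to the single term
$$e_{\m_A}(B)=\lambda_{A_{(0)}}\!\big(B_{(0)}\big)\,e_{\m_A}(A/(0))=\rank_A B\cdot e_A(A),$$
where I use $A_{(0)}=\Quot(A)$, the identification $B_{(0)}=\Quot(A)\otimes_A B$ together with the definition of $\rank_A B$ from \ref{Quot} to recognize $\lambda_{\Quot(A)}(\Quot(A)\otimes_A B)=\rank_A B$, and $e_{\m_A}(A/(0))=e_{\m_A}(A)=e_A(A)$. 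Combining the two steps gives $e_B(B)=e_A(B)=e_A(A)\rank_A B$, as claimed.

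Since the result is billed as elementary, there is no serious obstacle; the two points that require care are conceptual rather than computational. First, one must recognize that the Hilbert function $\dim_k B_n$ is insensitive to whether $B$ is regarded as a graded module over $A$ or over itself, which is what makes \ref{sga} directly applicable in both instances and lets me avoid any reduction or integral-dependence argument relating $\m_A B$ to $\m_B$. Second, one must verify the dimension/support bookkeeping in the associativity formula, namely that $\dim A=\dim B$ and that $(0)$ is the unique contributing prime, so that the multiplicity of the $A$-module $B$ factors cleanly as a rank times $e_A(A)$.
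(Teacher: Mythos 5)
Your proof is correct and follows the same route as the paper's: the equality $e_B(B)=e_A(B)$ via the insensitivity of the Hilbert function to whether $B$ is viewed over $A$ or over itself, and then $e_A(B)=e_A(A)\operatorname{rank}_AB$ via the associativity formula. You simply make explicit the details the paper leaves implicit, namely that $(0)$ is the unique contributing prime in the associativity formula and that $\lambda_{\Quot(A)}(\Quot(A)\otimes_AB)=\rank_AB$.
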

\begin{proof}One has $e_B(B)=e_A(B)=e_A(A)\rank_AB$. The first equality holds because $B$ has the same Hilbert function independent of whether $B$ is viewed as an $A$-module or a $B$-module. The second equality is the associativity formula for multiplicity. 
\end{proof}

\begin{chunk}\label{((0))}A dominant rational map $\Psi:\xymatrix{X\ar@{-->}[r]&Y}$ of projective varieties is {\it birational} if the induced map of function fields $K(Y)\hookrightarrow K(X)$ is an isomorphism. In general, the {\it degree} of the rational map $\Psi$ is the dimension of the field extension $[K(X):K(\operatorname{Im}\Psi)]$.\end{chunk}

\begin{remark}\label{degree} Let $\psi: A\to B$ be a 
homomorphism of standard-graded $k$-algebras, where $k$ is a field and $A$ and $B$ are domains. Assume that  $\psi(A_i)\subset B_i$ for all $i$, and that $\psi(A_+)\neq 0$. Then the degree of the rational map $\Proj (\psi):\Proj (B)\to \Proj (A)$ is $[\Quot (B):\Quot (\psi(A))]$. \end{remark}

\begin{proof}According to \ref{((0))}, the degree of $\Proj (\psi)$ is defined to be $[B_{(0)}:(\psi (A))_{(0)}]$, where $B_{(0)}$ and $(\psi (A))_{(0)}$ are the sub-fields of $\Quot (B)$ and $\Quot (\psi(A))$, respectively,  which consist of the homogeneous elements of degree zero. Let $a_1$ be an element of $A_1$ with $\psi(a_1)\neq 0$. It is easy to see that $\psi(a_1)$ is transcendental over $B_{(0)}$, $\Quot (B)=B_{(0)}(\psi(a_1))$, and $\Quot (\psi(A))= (\psi (A))_{(0)}(\psi(a_1))$. It follows that 
${[B_{(0)}:(\psi (A))_{(0)}]}=[\Quot (B):\Quot (\psi(A))]$.
\end{proof}

\begin{chunk} If $R=\bigoplus_{0\le i}R_i$ is a graded ring and $s$ is a positive integer, then the $s^{\text{th}}$ Veronese ring of $R$ is equal to $R^{(s)}=\bigoplus_{0\le i}R_{is}$. One regrades the Veronese ring in order to have the component of $R^{(s)}$ in degree $i$ be $R_{is}$. The $s^{\text{th}}$ Veronese of a graded module $M=\bigoplus M_i$ is formed in a similar manner: $M^{(s)}=\bigoplus M_{is}$, with $M_{is}$ having degree $i$. \end{chunk} 

\begin{chunk}\label{typical} Recall that the {\it Rees
algebra \/} $\mathcal R(I)$ of an ideal $I$ in a commutative ring $R$
is the graded subalgebra $R[It]$ of the polynomial ring $R[t]$. If $R$ 
has a distinguished maximal ideal $\mathfrak m$ (that is, if $R$ is graded with maximal homogeneous ideal $\mathfrak m$ or if $R$ is local with maximal ideal $\mathfrak m$), then the {\it special fiber ring} of $I$ is $\mathcal R(I)\otimes_R R/\mathfrak m$.

In the typical situation in the present paper, the ring $R$ will be a standard graded polynomial ring over a field $k$, and $I$ will be a homogeneous ideal of $R$ generated by homogeneous forms of the same  degree $d$, and, after regrading, the $k$-subalgebra $k[I_d]$ of $R$ will be the coordinate ring of a projective variety. In this case, there is  a $k$-algebra isomorphism from the projective coordinate ring $k[I_d]$, to the special fiber ring $\mathcal F(I)$. Indeed,
%
%
%
$$\xymatrix{
     &\mathcal R(I)\ar@{->>}[dr]\\
k[I_d]\cong k[I_dt]\ar@{^{(}->}[ur]{\phantom{xx}}\ar[rr]^{\cong}&& \mathcal R(I)\otimes_RR/\mathfrak m=\mathcal F(I),}$$where $\mathfrak m$ is the maximal homogeneous ideal of $R$.
 \end{chunk}

\begin{chunk}\label{reduction}Let $J\subset I$ be ideals in a commutative  Noetherian ring $R$. The following conditions are equivalent:
\begin{enumerate}[\rm(a)]
\item there exists a non-negative  integer $m$ with $JI^m=I^{m+1}$, and
\item the  Rees algebra $\mathcal R(I)$ is finitely generated as a module over $\mathcal R(J)$.
\end{enumerate}
When these conditions occur, one says that $J$ is   a {\it reduction}
 of $I$ or $I$ is {\it integral} over $J$. (For details, see, for example, \cite[8.21,~1.25,~1.1.1,~1.2.1]{SH}.)\end{chunk}

\begin{chunk}\label{2.14}
If $R$ is a Noetherian domain, then the ring  $S$ is an {\it$S_2$-ification of $R$} if  
$R\subset S\subset \Quot({R})$, $S$ is module-finite over $R$,  $S$ satisfies Serre's condition $(S_2)$ as an $R$-module, and for each $s\in S$, the ideal $R:_Rs$ of $R$ has height at least $2$. 
It follows from  \cite[2.3]{HH} that the $S_2$-fication of $R$ is unique and  from \cite[2.7]{HH} that if $R$ has a canonical module $\omega_R$ then 
$\operatorname{End}_R(\omega_R)$ is the $S_2$-ification of $R$.

\end{chunk}

\begin{chunk}\label{gri} The concept of  ``generalized row ideals''  appears widely in the literature; see, for example, \cite{G,EHU,EU}.
  Let $M$ be a matrix with entries in a $k$-algebra and $p$  be a non-zero row vector with entries from $k$, where $k$ is a field. A {\it generalized row} of $M$ is the product $pM$.
If $pM$ is a generalized row of $M$, then the ideal $I_1(pM)$ is called a {\it generalized row ideal} of $R$.
\end{chunk}

\begin{chunk}
Let $X$ be a topological space. We say that a {\it general point of $X$} has a certain property if there exists a  dense, open subset $U$ of $X$ so that every point of $U$ has the property.
\end{chunk}

\section{Fibers, multiplicity, and row ideals}\label{EU}
\begin{data}\label{d1}
Let $k$ be an infinite field and $\xymatrix{\Psi: \mathbb P_k^{s-1} \ar@{-->}[r]&  \mathbb P_k^{n-1}}$ be a rational map defined by $k$-linearly independent homogeneous forms $g_1,\dots, g_n$ of degree $d$ in $R=k[x_1,\dots,x_s]$,  a standard graded polynomial ring  over $k$ in $s$ variables  with maximal homogeneous ideal $\m$, $I$ be the homogeneous  ideal $(g_1,\dots, g_n)$ in $R$, and  $\varphi$ be a homogeneous syzygy matrix of $[g_1,\dots, g_n]$. Let $S=k[T_1, \dots, T_n]$ be a standard graded polynomial ring  over $k$ in $n$ variables. The map $\Psi$ corresponds to the $k$-algebra homomorphism $\psi:S\to R$, which sends  $T_i$ to $g_i$.  Let $A$ be the image  $k[I_d]$ of this homomorphism and $B$ be the Veronese ring $k[R_d]$. After regrading, we view $A\subset B$ as standard graded $k$-algebras. Notice that $A$ is the homogeneous coordinate ring of the image of $\Psi$. Notice also that, according to Remark \ref{degree}, $[B:A]$ is the degree of the rational map $\Psi$.
\end{data}

\begin{observation}\label{der} Adopt the data of {\rm \ref{d1}}.  If $I$ is $\m$-primary then  $d^{s-1}=e(A)[B:A]$.\end{observation}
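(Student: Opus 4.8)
The plan is to relate the multiplicity $e(A)$ of the image to the degree $d^{s-1}$ of the $s$-uple (or rather the $d$-uple) Veronese embedding via the intermediate ring $B = k[R_d]$, using the rank formula of Observation \ref{elem}. First I would establish that $A \subseteq B$ is a module-finite extension of standard graded $k$-algebras which are domains; this is immediate from \ref{d1}, since $A = k[I_d] \subseteq k[R_d] = B$ and $B$ is integral over $A$ whenever $\dim A = \dim B = s$, which in turn follows from the hypothesis that $I$ is $\m$-primary. Indeed, the key point is that $\m$-primariness of $I$ forces $\operatorname{Krull}\dim A = s$, since $\sqrt{I} = \m$ means the fiber dimension argument (or directly, $\dim \R(I) = s+1$ and $\dim A = \dim \mathcal F(I) = s$) gives $A$ the full dimension $s$, matching $\dim B = s$.

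With the module-finite extension of graded domains in hand, Observation \ref{elem} yields directly
\begin{equation*}
e_B(B) = e_A(A)\cdot \operatorname{rank}_A B.
\end{equation*}
By Remark \ref{degree} together with \ref{d1}, the rank $\operatorname{rank}_A B = [B:A]$ is precisely the degree of the rational map $\Psi$. So the whole statement reduces to the single computation $e_B(B) = d^{s-1}$.

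Thus the heart of the argument is to compute the multiplicity of the Veronese ring $B = k[R_d]$, which is the homogeneous coordinate ring of the $d$-uple Veronese embedding of $\mathbb{P}^{s-1}_k$. After regrading so that $R_d$ sits in degree $1$, the Hilbert function satisfies $\operatorname{HF}_B(n) = \dim_k R_{nd} = \binom{nd + s - 1}{s - 1}$, which is a polynomial in $n$ of degree $s-1$ with leading coefficient $d^{s-1}/(s-1)!$. Applying the Hilbert-function formula for multiplicity from \ref{sga}, namely $e_B(B) = (s-1)!\lim_{n\to\infty} \operatorname{HF}_B(n)/n^{s-1}$, gives $e_B(B) = (s-1)! \cdot d^{s-1}/(s-1)! = d^{s-1}$. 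Substituting back yields $d^{s-1} = e(A)\,[B:A]$ as desired.

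The main obstacle, such as it is, lies in cleanly justifying that $A$ and $B$ have the same Krull dimension $s$ so that the extension is genuinely module-finite and Observation \ref{elem} applies; this is exactly where the $\m$-primary hypothesis is used, and I would take care to invoke it precisely (the rational map being a morphism forces the image to have dimension $s-1$, hence $\dim A = s = \dim B$). The remaining steps---the two multiplicity identities and the binomial-coefficient Hilbert-function computation---are routine, so I expect no real difficulty beyond bookkeeping the regrading of the Veronese.
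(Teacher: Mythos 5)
Your overall strategy coincides with the paper's: show that $A\subseteq B$ is module-finite, apply Observation~\ref{elem} to obtain $e(B)=e(A)\operatorname{rank}_AB$, and compute $e(B)=d^{s-1}$ from the Hilbert function of the Veronese ring. The multiplicity computation and the identification of $\operatorname{rank}_AB$ with $[B:A]$ are fine. The one step that fails as written is your justification of module-finiteness: you assert that $B$ is integral over $A$ whenever $\dim A=\dim B=s$, and you identify this as the point where $\m$-primariness enters (via $\dim A=s$). That implication is false. Take $I=(x_1^2,x_1x_2)$ in $k[x_1,x_2]$, so $A=k[x_1^2,x_1x_2]$ and $B=k[x_1^2,x_1x_2,x_2^2]$; both rings have Krull dimension $2$, yet $x_2^2$ is not integral over $A$ (every homogeneous element of $A_+$ is divisible by $x_1$, so setting $x_1=0$ in a purported equation of integral dependence gives $x_2^{2n}=0$). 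Equality of dimensions only says that the generic fiber of $\Proj(B)\dashrightarrow\Proj(A)$ is finite; integrality of the extension requires $B_+\subseteq\operatorname{rad}(A_+B)$, i.e.\ control of the special fiber, which is exactly what fails for non-$\m$-primary ideals of maximal analytic spread.

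The repair is the paper's one-line argument, which uses $\m$-primariness directly rather than through dimensions: since $\m^{dm}\subseteq I$ for some $m$, one has $R_{dn}\subseteq I_{dn}=I_d\cdot R_{d(n-1)}$ for all $n\ge m$, i.e.\ $B_n=A_1B_{n-1}$, so $B$ is generated as an $A$-module by its graded pieces of degree less than $m$. With that substitution the rest of your argument --- Observation~\ref{elem} and the computation $\operatorname{HF}_B(n)=\binom{nd+s-1}{s-1}$, whence $e(B)=d^{s-1}$ --- goes through and matches the paper's proof.
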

\begin{proof} The hypothesis that the ideal $I$ is $\m$-primary forces the ring extension $A \subset B$ to be module-finite; since $\mathfrak m^{dm}\subset I$ for some m, hence $B_m=A_1B_{m-1}$. Observation~\ref{elem} yields  $e(A)[B:A]=e(B)$. On the other hand from the  Hilbert function of $B$ one sees  $e(B)=d^{s-1}$. \end{proof}


As in \cite{EU} we define the fibers of the rational map $\Psi$ to be the following schemes.

\begin{definition}\label{defEU}
Adopt the data  of \ref{d1}. 
Let $p$ be a rational closed point in $\mathbb P_k^{n-1}$ and let $\P \in \Proj(S)$ be the homogeneous prime ideal corresponding to $p$. The {\it fiber of $\Psi$ over $p$}, (denoted $\Psi^{-1}(p)$), is the scheme $\ \Proj\, (R/(\P R:_RI^{\infty}))\, .$
\end{definition}

\begin{remarks}\label{remEU} 
(1) Definition \ref{defEU} gives the correct notion of fiber as a set. Indeed, since the ideal $I$ is the extension to $R$ of the homogenous maximal ideal of $S$, the prime ideals of $\ \Proj\, (R/(\P R:_RI^{\infty}))$ correspond to the primes of $\ \Proj\, (R)$ that contract to the prime $\P$ of $\ \Proj\, (S) .$ In particular,  the rational closed points of $\ \Proj\, (R/(\P R:_RI^{\infty}))$ correspond to the points in the domain of $\Psi$ that map to the point $p$ of $\mathbb P_k^{n-1}$.

\smallskip \noindent (2) 
If $p$ is the point $[\alpha_1:\dots:\alpha_n]$ of $\mathbb P_k^{n-1}$, then $\mathfrak P$ is the prime ideal 
\begin{equation}\label{P}\mathfrak P=I_2\left(\bmatrix \alpha_1&\dots&\alpha_n\\T_1&\dots&T_n\endbmatrix\right)\end{equation} of $S$, and $\mathfrak PR$ and $\mathfrak PB$ are the extensions of $\mathfrak P$ to the rings $R$ and $B$, respectively, under the ring homomorphisms:
$$\xymatrix{
&&R\\
S\ar^{\psi}[r]&A\ar^{\rm{incl}}[ur]\ar[dr]_{\rm{incl}}&\\
&&B,\ar[uu]_{\rm{incl}}}$$where $\rm incl$ is the inclusion map. So, in particular, the ideals $\mathfrak PR$ and $\mathfrak PB$ both are generated by the $2\times 2$ minors of
\begin{equation}\label{PR}I_2\left(\bmatrix\alpha_1&\dots&\alpha_n\\g_1&\dots&g_n\endbmatrix\right).\end{equation}
\end{remarks}

\bigskip

\bigskip

Remark~\ref{fiber} shows that, not surprisingly, the fiber of $\Psi$, as defined in Definition~\ref{defEU}, does not change when the rational map $\Psi$  is composed with a $t$-uple embedding. Furthermore, if the point is general, then the fiber of $\Psi$ does not change when $\Psi$ is composed with a birational map.

\begin{remark}\label{fiber} Adopt the data  of {\rm \ref{d1}} and let ${\mathfrak Q}$ be the homogeneous prime ideal in $R$ which corresponds to the rational point $q$ in $\mathbb P^{s-1}_k$.

\smallskip \noindent (1)   
If $q$ is any point  in the domain of the rational map $\Psi$, then the ideals $({\mathfrak Q} \cap A)R :_R \m^{\infty}$ and $({\mathfrak Q} \cap A^{(t)})R :_R \m^{\infty}$ of $R$ are equal for all positive integers $t$, where $A^{(t)}$ denotes the $t$-Veronese sub-ring of  $A$. 

\smallskip \noindent (2) Let $C$ be a standard graded $k$-algebra with $A \subset C \subset B$ and assume that $C$ is birational over $A$.   If $q$ is a general rational point  in $\mathbb P^{s-1}_k$, then the ideals $({\mathfrak Q} \cap A)R :_R \m^{\infty}$ and $({\mathfrak Q} \cap C)R :_R \m^{\infty}$ of $R$  are equal. 
\end{remark}

\begin{proof}
\smallskip \noindent (1) It suffices to show that  $({\mathfrak Q} \cap A)R$ and $({\mathfrak Q} \cap A^{(t)})R$ are equal locally at any homogeneous relevant prime ideal of $R$ that contains either ideal. Any such prime ideal contracts to  $\Q\cap A$ in $A$, and $\Q \cap A$ is a relevant prime ideal of $A$ because $q$ is in the domain of $\Psi$. Therefore, the two ideals ${\mathfrak Q} \cap A$ and 
$(\Q \cap A^{(t)})A$ of $A$ coincide locally at $\Q \cap A$. 

\smallskip \noindent (2) In a similar manner it suffices to show that the two ideals   $\Q \cap C$ and $(\Q \cap A)C$ are equal  locally at the prime ideal $\Q \cap A$ of $A$. To see this, notice that   $A_{\Q \cap A}=C_{\Q \cap A}$ because the extension $A \subset C$ is birational and the point $q$ is general.  
\end{proof}

In the next few results we impose the hypothesis that the Krull dimension of the ring $A$ of Data~{\rm\ref{d1}} is $s$. One could also say that the ideal $I$ has maximal analytic spread. This hypothesis  holds  when $I$ is $\mathfrak m$-primary. Indeed, in this case, $B$ is finitely generated as an $A$-module, as was observed in the proof of Observation~\ref{der}; hence, $A$ and $B$ have the same dimension. One advantage of the hypothesis ${\rm dim} \, A=s$  is that the rings $C \subset A \subset B$ all have the same Krull dimension for any Noether normalization $C$ of $A$ and this allows for  multiplicity calculation involving these rings. 

In the next proposition we show that for a general $k$-rational point $q$ in $\mathbb P_k^{s-1}$  the multiplicity of the fiber over $\Psi(q)$ coincides with the degree of the field extension $\Quot(A) \subset \Quot(B)$. Notice that the rational map $\Psi$ is defined at such a point $q$ and that  $\Psi(q)$ is general in the image of $\Psi$.

\begin{proposition}\label{lemmaEU} Adopt Data~{\rm\ref{d1}} and assume that the Krull dimension of $A$ is equal to $s$. Then 
the equation 
$$[B:A]=e(R/(\p R :_R I^{\infty}))$$
holds,  where $\p$ is the homogeneous prime ideal in $A$ of $\, \Psi(q)$ for  a general rational point $q$ in $\mathbb P_k^{s-1}$. 
\end{proposition}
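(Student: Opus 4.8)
The plan is to reduce the multiplicity on the right-hand side to the degree of a finite fiber scheme, and then to identify that degree with the generic rank of $B$ over $A$, which equals $[B:A]$.

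First I would record the numerics and the reduction. Write $p=\Psi(q)$, set $J=\p R:_RI^{\infty}$ and $r=[B:A]$. By (\ref{quot}) and the conventions of \ref{Quot} one has $\Quot(A)\otimes_AB=\Quot(B)$, hence $\rank_AB=[\Quot(B):\Quot(A)]=[B:A]=r$; this is finite since $\dim A=\dim B=s$ forces $\Quot(B)$ to be algebraic over $\Quot(A)$. As $\dim A=s$, the image $\operatorname{Im}\Psi=\Proj(A)$ has dimension $s-1=\dim\mathbb P^{s-1}_k$, so the dominant map $\Psi$ is generically finite and, for general $q$, the fiber $\Proj(R/J)$ is zero-dimensional, i.e. $\dim R/J=1$. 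For a one-dimensional standard graded $k$-algebra, \ref{sga} identifies the multiplicity with the eventual value of the Hilbert function, that is, with the degree of the zero-dimensional scheme $\Proj(R/J)$. Applying the associativity formula \ref{assoc} with $\mathfrak a=\m$ gives $e(R/J)=\sum_{\Q}\lambda_{R_{\Q}}\!\big((R/J)_{\Q}\big)\,e(R/\Q)$, the sum running over the homogeneous primes $\Q$ of $R$ minimal over $J$ with $\dim R/\Q=1$; these are exactly the points of the fiber. Because the saturation by $I^{\infty}$ deletes every component supported on $V(I)$, each such $\Q$ lies off $V(I)$, so $JR_{\Q}=\p R_{\Q}$ there.

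Next I would compute this degree as a rank. By generic freeness (a standard result of Grothendieck) there is a nonzero homogeneous $a\in A$ for which $B_a$ is a free $A_a$-module, necessarily of rank $r$ by the previous paragraph. I would then choose $q$ general enough that simultaneously $a\notin\p$ and $p\notin\overline{\Psi(V(I))}$, the latter being a proper closed subset of $\operatorname{Im}\Psi$; this is possible because $k$ is infinite, $\Psi$ is dominant, and a general $q$ produces a general $p$. With these choices $B\otimes_A\kappa(\p)$ is free of rank $r$ over $\kappa(\p)$. The crucial point is that the homogeneous primes of $B$ contracting to $\p$ are precisely the cones over the honest preimages $q'$ of $p$: the cone over any base point of $\Psi$ contracts to the irrelevant maximal ideal $A_+$, not to the proper prime $\p$, so the base locus contributes nothing over $\p$. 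Matching, prime by prime, the local lengths in the associativity formula with the local contributions to $\dim_{\kappa(\p)}\!\big(B\otimes_A\kappa(\p)\big)$—a reduced point $q'$ contributing its residue degree $[\kappa(q'):k]$ on both sides, and non-reduced points the corresponding lengths—yields $e(R/J)=\dim_{\kappa(\p)}\!\big(B\otimes_A\kappa(\p)\big)=r$.

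The main obstacle is precisely this last matching: reconciling the $R$-theoretic fiber, whose saturation by $I^{\infty}$ strips away the base locus, with the $A$-module structure of $B$ and the rank $r$ from generic freeness. One must check that $B\otimes_A\kappa(\p)$ records exactly the honest fiber points with their correct scheme-theoretic multiplicities—neither discarding embedded contributions nor acquiring spurious base-locus components—and that the degree of the projective fiber equals the $\kappa(\p)$-dimension of this Artinian algebra. All of this is forced once $q$, hence $p$, is chosen general, since only finitely many proper closed conditions (membership of $a$ in $\p$, of $p$ in $\overline{\Psi(V(I))}$, the non-flat locus of $B$ over $A$, and the singular locus of $\operatorname{Im}\Psi$) must be avoided, and an infinite field together with the dominance of $\Psi$ permits avoiding them all.
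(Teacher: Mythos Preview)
Your approach is essentially the paper's: generic freeness gives $[B:A]=\dim_{\kappa(\p)}\big(B\otimes_A\kappa(\p)\big)$, and one then decomposes this Artinian fiber over the primes $\q'$ of $B$ lying over $\p$ and matches term by term with the associativity formula for $e\big(R/(\p R:_RI^{\infty})\big)$, using $\Proj(B/\p B)\simeq\Proj(R/\p R)$ to pass between $B$ and $R$. The paper streamlines your ``matching prime by prime'' step by first reducing to $k$ algebraically closed, so that $A/\p$ and each $B/\q'$ are both polynomial rings in one variable over $k$ and the homogeneous inclusion $A/\p\subset B/\q'$ is forced to be an equality; this makes all residue-field extensions trivial and the length comparison $\lambda_{A_{\p}}(B_{\q'}/\p B_{\q'})=\lambda_{B_{\q'}}(B_{\q'}/\p B_{\q'})$ immediate, replacing the residue-degree bookkeeping you sketch.
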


\begin{proof}No harm is done if we assume that $k$ is algebraically closed because the hypotheses and conclusions remain unchanged under this change of base. 

The rational map $\xymatrix{\Proj(R) \ar@{-->}[r]& \Proj(A)}$ is defined at general points $q$ and their images $p$, which correspond to prime ideals ${\mathfrak p}$, are general in $\Proj(A)$. Since $A$ and $B$ have the same Krull dimension, the field extension ${\rm Quot}(A) \subset \Quot(B)$ is algebraic and therefore finite. 
Since $\p$ is general, the Generic Freeness Lemma implies that $B_{\p}$ is free as an $A_{\p}$-module. Therefore 
$B_{\p}$ is a finitely generated  $A_{\p}$-module
and
\[ [\Quot(B) : \Quot(A)] = \mu_{A_{\p}}(B_{\p})= \lambda_{A_{\p}}(B \otimes_A k(\p)).
\]  
The ring $B \otimes_A k(\p)$ is Artinian and therefore $B$ is equal to the direct product $\displaystyle\cart_{\q'} B_{\q'}/\p B_{\q'}$, where $\q'$ varies over all primes in $\Proj(B)$ with $\q'\cap A=\p$. Therefore
\[\lambda_{A_{\p}}(B \otimes_A k(\p))= \sum_{\q'} \lambda_{A_{\p}}(B_{\q'}/\p B_{\q'})\, .
\]

Since $B_{\p}$ is a finitely generated  $A_{\p}$-module,  the field extension ${\rm Quot}(A/\p) \subset \Quot(B/\q')$ is algebraic, and therefore $A/\p$ and $B/\q'$  have the same  the Krull dimension.  Thus the rings $A/\p$ and $B/\q'$ are standard graded one-dimensional domains over the algebraically closed field $k$, hence both are polynomial rings in one variable. Since the inclusion $A/\p \subset B/\q'$ is homogeneous, it then follows that it is actually an equality. Therefore  $k(\p) = k(\q')$  and we obtain 
\[ \lambda_{A_{\p}}(B_{\q'}/\p B_{\q'})=\lambda_{B_{\q'}}(B_{\q'}/\p B_{\q'})\, .
\]
Since $\Proj (B/\p B)\simeq \Proj(R/\p R)$, there exists a unique $\Q'\in \Proj(R)$ with $\Q' \cap A =\p$ corresponding to each $\q'$ and furthermore  \[\lambda_{B_{\q'}}(B_{\q'}/\p B_{\q'})=\lambda_{R_{\Q'}}(R_{\Q'}/\p R_{\Q'}) \, .\]

In summary we obtain
\[ [\Quot(B) : \Quot(A)]=\sum_{\q'} \lambda_{A_{\p}}(B_{\q'}/\p B_{\q'})=\sum_{\Q'} \lambda_{R_{\Q'}}(R_{\Q'}/\p R_{\Q'})\, .
\] 
Let $\m_A$ denote the homogeneous maximal ideal of $A$. Notice that $\m_AR=I$ and recall that ${\rm dim}\, A/\p$ is one. It follows that
\begin{align}\notag \{\Q' \mid \Q'\in \Proj(R), \, \Q' \cap A =\p \} &= \{\Q' \mid \Q'\in \Proj(R),\,  \p \subset \Q', \, \m_A \not\subset \Q'  \} \\
\notag &= \{\Q'  \mid  \Q'\in \Proj(R),\,  \p R \subset \Q', \, I \not\subset \Q'  \}\\
\notag &=\, \Proj(R) \cap V(\p R :_R I^{\infty})\, .
\end{align}

\noindent
The rings $R/\Q'$ are polynomial rings in one variable for every prime ideal $\Q'$ in the  above set. In particular, these prime ideals correspond to the minimal primes of maximal dimension of the ring $R/(\p R:_R I^{\infty})$. Now  the associativity formula for multiplicity implies that 
\[ \sum_{\Q'} \lambda_{R_{\Q'}}(R_{\Q'}/\p R_{\Q'}) =e(R/(\p R:_R I^{\infty}))\, ,\]
which completes the proof. 
\end{proof}

In the next corollary we make use of a Noether normalization $C$ of $A$ to compute the multiplicity of $A$ and express it in terms of the field degree $[A:B]$ and of the multiplicity of a ring defined by a colon ideal in $R$. 

\begin{corollary}\label{nov23} Let $k$ be an infinite field, $R=k[x_1,\dots,x_s]$ be a standard graded polynomial ring  over $k$ in $s$ variables, $I$ be a homogeneous  ideal  in $R$ generated by forms  of degree $d$, $A$ be the  subring $k[I_d]$, and $B$ be the Veronese subring $k[R_d]$. Assume that the Krull dimension of $A$ is equal to $s$ and let $f_1, \ldots, f_{s-1}$ be general $k$-linear combinations of homogenous minimal generators of $I$. Then
\[e(A)=\frac{1}{[B:A]} \cdot \, e\left(\frac{R}{(f_1, \ldots, f_{s-1}):_RI^{\infty}}\right)\, .
\]
\end{corollary}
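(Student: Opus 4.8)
The plan is to read the right-hand side as the number of points, counted with multiplicity, of the fibers of $\Psi$ lying over a general linear section of $\operatorname{Im}\Psi$, and then to evaluate that number by combining the degree of a single general fiber (Proposition~\ref{lemmaEU}) with the degree $e(A)=\deg(\operatorname{Im}\Psi)$ of the image.

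First I would reduce to the case where $k$ is algebraically closed, since $e(A)$, $[B:A]$, and $e(R/((f_1,\dots,f_{s-1}):_RI^\infty))$ are all preserved under the faithfully flat base change $k\to\bar k$; this is the same reduction made in the proof of Proposition~\ref{lemmaEU}. Write $J=(f_1,\dots,f_{s-1})R$ and $\bar J=(f_1,\dots,f_{s-1})A$, so $J=\bar JR$, and recall $\mathfrak m_AR=I$. After regrading, each $f_i$ is a general element of $A_1=I_d$; since $\dim A=s$ the forms $f_1,\dots,f_{s-1}$ are a partial homogeneous system of parameters for $A$, so $\dim A/\bar J=1$. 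Geometrically $\operatorname{Proj}(A/\bar J)=L\cap\operatorname{Im}\Psi$, where $L\subseteq\mathbb P_k^{n-1}$ is the general codimension-$(s-1)$ linear space cut out by the $f_i$. By the classical general-position principle this intersection is a set of exactly $e(A)$ distinct reduced points $p_1,\dots,p_m$ (so $m=e(A)$), each lying in the smooth locus of $\operatorname{Im}\Psi$ and in the dense open set on which Proposition~\ref{lemmaEU} applies; let $\mathfrak p_1,\dots,\mathfrak p_m$ be the corresponding homogeneous primes of $A$.

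Next I would determine $\dim R/(J:_RI^\infty)$ and its primes of top dimension. Any minimal prime $\mathfrak Q'$ of $J:_RI^\infty$ satisfies $\mathfrak Q'\supseteq J$ and $\mathfrak Q'\not\supseteq I$, so its contraction $\mathfrak p=\mathfrak Q'\cap A$ is a relevant prime (as $\mathfrak m_AR=I$) containing $\bar J$; since $\dim A/\bar J=1$ this forces $\dim A/\mathfrak p=1$, hence $\mathfrak p=\mathfrak p_j$ for some $j$, and finiteness of the general fiber over $p_j$ gives $\dim R/\mathfrak Q'=1$. Thus every top-dimensional prime of $R/(J:_RI^\infty)$ lies over some $\mathfrak p_j$, and since $k$ is algebraically closed and the $p_j$ are rational, $R/\mathfrak Q'\cong k[t]$ and $e(R/\mathfrak Q')=1$. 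The key local comparison is that at each such $\mathfrak Q'$ the ideals $J$ and $\mathfrak p_jR$ coincide: transversality of $L$ to $\operatorname{Im}\Psi$ at the smooth point $p_j$ means the images of $f_1,\dots,f_{s-1}$ generate $\mathfrak p_jA_{\mathfrak p_j}$, so $\bar JA_{\mathfrak p_j}=\mathfrak p_jA_{\mathfrak p_j}$, and extending along the induced map $A_{\mathfrak p_j}\to R_{\mathfrak Q'}$ gives $J_{\mathfrak Q'}=(\mathfrak p_jR)_{\mathfrak Q'}$; saturating by $I$ changes nothing locally at $\mathfrak Q'$ because $I\not\subseteq\mathfrak Q'$.

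Finally I would assemble the multiplicity through the associativity formula~\ref{assoc}. Grouping the top-dimensional primes $\mathfrak Q'$ of $R/(J:_RI^\infty)$ by the point $p_j$ below them and invoking the local comparison together with the associativity formula applied to each fiber ring $R/(\mathfrak p_jR:_RI^\infty)$, I obtain
\[
e\!\left(R/(J:_RI^\infty)\right)
=\sum_{j=1}^{m}\ \sum_{\mathfrak Q'\cap A=\mathfrak p_j}\lambda_{R_{\mathfrak Q'}}\!\big((R/(\mathfrak p_jR:_RI^\infty))_{\mathfrak Q'}\big)
=\sum_{j=1}^{m}e\!\left(R/(\mathfrak p_jR:_RI^\infty)\right).
\]
Proposition~\ref{lemmaEU} identifies each summand with $[B:A]$, because $p_j$ is a general point of the image, so the sum equals $m\,[B:A]=e(A)\,[B:A]$; dividing by $[B:A]$ yields the claim. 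The main obstacle is the local comparison of the preceding paragraph: it hinges on the general linear section meeting $\operatorname{Im}\Psi$ transversally, in reduced points of the smooth locus, so that the $s-1$ forms $f_i$ generate the primes $\mathfrak p_j$ after localization. Making this precise in arbitrary characteristic---using that a general point of $\operatorname{Im}\Psi$ is smooth (the field being perfect after passing to $\bar k$) and that a general linear space of complementary codimension is transversal there---is the technical heart of the argument.
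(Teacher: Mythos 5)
Your argument is correct in outline, but it takes a genuinely different---and substantially heavier---route than the paper's. The paper adjoins one more general form $f_s$ and works with the Noether normalization $C=k[f_1,\dots,f_s]\subset A\subset B$: multiplicativity of field degrees gives $[B:C]=[B:A]\cdot e(A)$, since $[A:C]=e(A)$ by Observation~\ref{elem}, while a single application of Proposition~\ref{lemmaEU} to the finite map defined by $f_1,\dots,f_s$---whose fiber over the one general point $[0:\cdots:0:1]$ of $\Proj(C)$ is exactly $\Proj\bigl(R/((f_1,\dots,f_{s-1}):_RI^{\infty})\bigr)$---identifies $[B:C]$ with the multiplicity on the right-hand side. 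You instead keep the original map $\Psi$ and decompose $V((f_1,\dots,f_{s-1}):_RI^{\infty})$ into the $e(A)$ fibers of $\Psi$ over the points of a general codimension-$(s-1)$ linear section of $\operatorname{Im}\Psi$, applying Proposition~\ref{lemmaEU} once per point and summing via the associativity formula. The two computations agree numerically ($e(A)$ fibers of degree $[B:A]$ versus one fiber of degree $e(A)[B:A]$), but your route genuinely needs the Bertini-type input you flag at the end---reducedness and transversality of the general complementary-dimensional linear section, which do hold in arbitrary characteristic because you are sectioning by the full hyperplane system of the embedding $\Proj(A)\subset\mathbb P^{n-1}$---in order to get the local identification $J_{\mathfrak Q'}=(\mathfrak p_jR)_{\mathfrak Q'}$ that feeds the associativity formula. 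The paper's tower argument sidesteps all of that: the only general-position fact it uses is already packaged inside Proposition~\ref{lemmaEU} (via generic freeness). So your proof buys a transparent geometric picture (degree of the image times degree of a general fiber, read off a linear section) at the cost of extra general-position lemmas that would need to be proved or cited carefully; the paper buys brevity by introducing the auxiliary finite projection to $\Proj(C)\cong\mathbb P_k^{s-1}$ and treating the whole scheme $\Proj\bigl(R/((f_1,\dots,f_{s-1}):_RI^{\infty})\bigr)$ as a single general fiber.
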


\begin{proof} Let $f_1, \ldots, f_{s}$ be general $k$-linear combinations of homogenous minimal generators of $I$. Let $C=k[f_1, \ldots, f_s]$. Since $A$ has dimension $s$, the homogenous ring extension $C\subset A$ is module-finite  and $C$ is a polynomial ring. Hence $[A:C]=e(A)$ by Observation \ref{elem}, for instance.

We have the inclusion of domains
\[ C \subset A \subset B\, .\]
We compute the field degree $[B:C]$ in two different ways. First, 
\[ [B:C]=[B:A]\cdot e(A)\, .\]
\noindent
Next, we wish to apply Proposition \ref{lemmaEU} to express $[B:C]$. By the general choice of $f_1, \ldots, f_{s}\, $, we may assume that $[0:\ldots:0:1]$ is a general point of $\Proj (C) \simeq \Bbb P_k^{s-1}$.  The homogenous prime ideal of $C$ corresponding to this point is $\p=(f_1, \ldots, f_{s-1})C$. Therefore Proposition \ref{lemmaEU} shows that
\[[B:C]=e\left(\frac{R}{(f_1, \ldots, f_{s-1}):_RI^{\infty}}\right)\, ,\]
which yields the desired conclusion. 
\end{proof}

We promised in the introduction to relate the degree of the rational map $\xymatrix{\Psi: \mathbb P_k^{s-1} \ar@{-->}[r]&  \operatorname{im} \Psi}$ of Data~\ref{d1}, the degree of the image, and the $j$-multiplicity of the ideal $I$. The relation is 
$$j(I)=d[B:A]e(A).$$ A very quick proof of this formula is available in the present situation. Another proof of this formula, in a more general setting, is given in Theorem~\ref{rees}.

\begin{corollary}\label{nov24} Adopt the data of {\rm \ref{d1}}.  Then
$$j(I)=d[B:A]e(A).$$
\end{corollary}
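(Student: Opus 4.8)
The plan is to compute the $j$-multiplicity $j(I)$ by reducing to a multiplicity of the form that already appeared in Corollary~\ref{nov23}. Recall that the $j$-multiplicity of an ideal $I$ in a Noetherian local (or standard graded) ring $R$ of dimension $s$ is defined via the associated graded ring $\operatorname{gr}_I(R)$ as
\[
j(I)=e_{\m}\bigl(H^0_{\m}(\operatorname{gr}_I(R))\bigr),
\]
the multiplicity of the degree-zero part supported at $\m$, suitably normalized. The key classical tool here is the theorem of Achilles--Manaresi \cite{AM}, which says that if $f_1,\dots,f_{s-1}$ are general $k$-linear combinations of the generators of $I$ (a general ``minimal reduction up to the last step''), then $j(I)$ equals the length-multiplicity of $R$ modulo the colon ideal $(f_1,\dots,f_{s-1}):_R I^\infty$, multiplied by the degree $d$ that records the grading shift coming from the fact that the $f_i$ have degree $d$.

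\medskip

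First I would recall the general-element characterization of the $j$-multiplicity: choosing $f_1,\dots,f_{s-1}$ to be general $k$-linear combinations of homogeneous minimal generators of $I$, one has
\[
j(I)=d\cdot e\!\left(\frac{R}{(f_1,\dots,f_{s-1}):_R I^\infty}\right).
\]
Here the factor $d$ is exactly the common degree of the generators $g_1,\dots,g_n$ of $I$; it enters because the length computation that defines the $j$-multiplicity is carried out with respect to the $\m$-adic filtration on $R$, whereas the $f_i$ live in degree $d$, so converting the Hilbert-function count into a multiplicity of $R/((f_1,\dots,f_{s-1}):_R I^\infty)$ as a standard graded ring introduces the normalizing factor $d$. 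Once this formula is in hand, I would immediately invoke Corollary~\ref{nov23}, which (under the standing hypothesis that $\dim A=s$, automatic here since $I$ is $\m$-primary in Data~\ref{d1}) gives
\[
e\!\left(\frac{R}{(f_1,\dots,f_{s-1}):_R I^\infty}\right)=[B:A]\,e(A).
\]
Combining the two displays yields $j(I)=d\,[B:A]\,e(A)$, which is the assertion.

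\medskip

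The main obstacle, and the only genuinely nontrivial input, is establishing the general-element formula for $j(I)$ with the correct factor of $d$. The underlying statement is the Achilles--Manaresi description of $j$-multiplicity via general hyperplane-type sections \cite{AM}: the $j$-multiplicity is computed by the ``length of the fiber'' over a general point, which is precisely what the colon ideal $(f_1,\dots,f_{s-1}):_R I^\infty$ measures. Tracking the grading carefully to verify that the degree-$d$ generators contribute exactly one factor of $d$ (and not $d^{s-1}$ or some other power) is the delicate bookkeeping step; geometrically it reflects that only the single ``extra'' dimension collapsed by passing to the colon — the fiber dimension — carries a $d$-twist. Since the excerpt announces a second, more general proof in Theorem~\ref{rees}, I expect the proof here to be deliberately short, quoting \cite{AM} for the general-element formula and then citing Corollary~\ref{nov23}; the real content has already been packaged into that corollary, so the remaining work is just to identify its right-hand side with the $j$-multiplicity.
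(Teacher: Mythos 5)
Your proposal is correct and follows essentially the same route as the paper: quote the general-element formula for $j(I)$ and then invoke Corollary~\ref{nov23}. The only difference is in the bookkeeping for the factor $d$: the paper cites the formula in the form $j(I)=\lambda_R\bigl(R/((f_1,\dots,f_{s-1}):_RI^{\infty},f_s)\bigr)$ with $s$ general elements and then extracts the $d$ by noting that $f_s$ is a degree-$d$ nonzerodivisor on the one-dimensional graded ring $R/((f_1,\dots,f_{s-1}):_RI^{\infty})$, which is a cleaner justification than the grading-shift heuristic you give (and note that Data~\ref{d1} does not actually assume $I$ is $\m$-primary, so the hypothesis $\dim A=s$ of Corollary~\ref{nov23} is being used implicitly rather than following from primality).
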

\begin{proof}
According to \cite[2.5]{Xie}  (see also  \cite[3.8]{AM} and \cite[3.6]{NU})
$$j(I)=\lambda_R\left(\frac{R}{((f_1, \ldots, f_{s-1}):_RI^{\infty},f_s)}\right),$$ in the language of Corollary~\ref{nov23}. On the other hand, 
$f_s$ is regular on $\frac{R}{(f_1, \ldots, f_{s-1}):_RI^{\infty}}$, and $f_s$ is a homogeneous element of $R$ of degree $d$; so, 
$$\lambda_R\left(\frac{R}{((f_1, \ldots, f_{s-1}):_RI^{\infty},f_s)}\right)=d\cdot e\left(\frac{R}{(f_1, \ldots, f_{s-1}):_RI^{\infty}}\right),$$ and the assertion follows from Corollary~\ref{nov23}.
\end{proof}

 We now relate the fibers of the morphism $\Psi$ of Data~\ref{d1} to the generalized row ideals of the presentation matrix $\varphi$ of $I$. This material is taken from \cite{EU}. For a point $p \in \mathbb P_k^{n-1}$ the ideal $I_1(p\varphi)$ is well-defined: one may use any representative of  $p$ when computing the matrix product $p\varphi$.

\begin{observation}\label{obsEU} Adopt Data~{\rm\ref{d1}}. If $\P$ is the homogeneous prime ideal in $S$ which corresponds to the rational point $p$ in $\mathbb P^{n-1}_k$, then 
 the ideals $\P R:I^\infty$
and $I_1(p\varphi):I^\infty$
of $R$
are equal.
\end{observation}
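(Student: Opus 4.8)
The plan is to show that the two ideals become equal after inverting $I$, i.e. that they have the same localization at every prime not containing $I$; by an elementary saturation principle this forces the saturated ideals $\P R:I^\infty$ and $I_1(p\varphi):I^\infty$ to coincide. Concretely, I would first record the following fact, valid for any ideals $J_1,J_2,I$ in a Noetherian ring: if $(J_1)_{\mathfrak p}=(J_2)_{\mathfrak p}$ for every prime $\mathfrak p$ with $I\not\subseteq\mathfrak p$, then $J_1:I^\infty=J_2:I^\infty$. For the proof one takes $r$ with $rI^m\subseteq J_1$; at each $\mathfrak p\not\supseteq I$ one has $I_{\mathfrak p}=R_{\mathfrak p}$, so $r\in(J_1)_{\mathfrak p}=(J_2)_{\mathfrak p}$, whence $J_2:_Rr$ is contained in no such $\mathfrak p$; thus $V(J_2:_Rr)\subseteq V(I)$ and, $R$ being Noetherian, $rI^{m'}\subseteq J_2$ for some $m'$, giving $J_1:I^\infty\subseteq J_2:I^\infty$, and symmetrically the reverse inclusion. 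Applying this with $J_1=\P R$ and $J_2=I_1(p\varphi)$ reduces the statement to the equality $(\P R)_{\mathfrak p}=(I_1(p\varphi))_{\mathfrak p}$ for all $\mathfrak p$ with $I\not\subseteq\mathfrak p$.

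Next I would fix such a $\mathfrak p$, set $T=R_{\mathfrak p}$, and exploit that some $g_\ell$ is a unit in $T$ (because $I\not\subseteq\mathfrak p$) and some $\alpha_i$ is a unit in $T$ (because $p\in\mathbb P_k^{n-1}$ makes some $\alpha_i$ a nonzero scalar). Viewing $g=[g_1,\dots,g_n]$ and $\alpha=[\alpha_1,\dots,\alpha_n]$ as $T$-linear maps $T^n\to T$, Remark~\ref{remEU}(2) identifies $(\P R)_T$ with the ideal generated by the $2\times2$ minors $\Delta_{ij}:=\alpha_ig_j-\alpha_jg_i$. For the other ideal I would note that the entries of $p\varphi=\alpha\varphi$ are the values of $\alpha$ on the columns of $\varphi$, so that globally $I_1(p\varphi)$ is the image under $\alpha$ of the image of $\varphi$; since $\varphi$ presents $I$, localizing gives $(I_1(p\varphi))_T=\alpha\bigl(\ker(g\colon T^n\to T)\bigr)$.

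The computation is then finished by using again that $g_\ell$ is a unit: the syzygy module $\ker g$ is free and generated by the Koszul relations $u_{ij}=g_ie_j-g_je_i$, where $e_1,\dots,e_n$ is the standard basis of $T^n$, and $\alpha(u_{ij})=-\Delta_{ij}$, so $(I_1(p\varphi))_T=(\Delta_{i\ell}\mid i\neq\ell)$. Finally the Plücker relation $g_\ell\Delta_{ij}=g_j\Delta_{i\ell}+g_i\Delta_{\ell j}$, combined with $g_\ell\in T^{\times}$, shows that these $\Delta_{i\ell}$ generate all the minors $\Delta_{ij}$, whence $(I_1(p\varphi))_T=(\Delta_{ij}\mid i,j)=(\P R)_T$, as required. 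I expect the one genuinely substantive step to be the passage from the arbitrary presentation matrix $\varphi$ to the explicit Koszul syzygies $u_{ij}$: this is exactly what forces the localization at primes $\mathfrak p\not\supseteq I$, since only there is $\ker g$ free with these transparent generators, allowing the two descriptions of the minor ideal to be matched; the remaining manipulations are routine bookkeeping with $2\times2$ minors.
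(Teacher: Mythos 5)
Your proof is correct, but it takes a genuinely different route from the paper's. The paper argues globally: it chooses an invertible scalar matrix $\chi$ with $(\alpha_1,\dots,\alpha_n)\chi=(0,\dots,0,1)$, replaces $[g_1,\dots,g_n]$ by $\pmb g'=[g_1,\dots,g_n]\chi$ and $\varphi$ by $\varphi'=\chi^{-1}\varphi$, and then reads off that $I_1(p\varphi)$ is the ideal of the bottom row of $\varphi'$, which equals the colon ideal $(g_1',\dots,g_{n-1}'):g_n'=\P R:I$ because $\varphi'$ is a full syzygy matrix; saturating both sides finishes the argument. You instead localize at each prime $\mathfrak p\not\supseteq I$, where some $g_\ell$ is a unit, identify $(I_1(p\varphi))_{\mathfrak p}$ with $\alpha(\ker g)$ via the Koszul generators $u_{\ell i}$ of the localized syzygy module, and match this against the $2\times 2$ minors generating $(\P R)_{\mathfrak p}$ using the Pl\"ucker relation; a general saturation lemma then transfers the local equalities to an equality of the two saturations. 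Both arguments hinge on the same essential input, namely that the columns of $\varphi$ generate the \emph{entire} syzygy module of $[g_1,\dots,g_n]$. The paper's version is shorter and yields the sharper unsaturated identity $I_1(p\varphi)=\P R:I$ (equation (\ref{BBB})), which is worth having; your version avoids the choice of $\chi$ and makes completely explicit why only the behavior away from $V(I)$ matters, at the cost of an extra reduction lemma. One cosmetic remark: after asserting that $\ker(g\colon T^n\to T)$ is generated by all the Koszul relations $u_{ij}$, you could conclude $(I_1(p\varphi))_T=(\Delta_{ij}\mid i,j)$ immediately and skip the Pl\"ucker step; as written you restrict to the generators $u_{i\ell}$ and then recover the remaining minors, which is also fine but slightly roundabout.
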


\begin{proof}Let $p=[\alpha_1:\dots:\alpha_n]$ with $\alpha_i$ in $k$. Recall the generating sets given in (\ref{P}) and (\ref{PR}) for  the ideals $\P$ and $\P R$ of $S$ and $R$, respectively. Let $\chi$ be an invertible matrix with entries in $k$ and $(0,\dots,0,1)=(\alpha_1,\dots,\alpha_{n})\chi$. 
Define $[g_1',\dots,g_{n}']=\pmb g'$ by
\begin{equation}\label{.gi0} \pmb g' =[g_1,\dots,g_{n}]\chi.\end{equation}The entries of $\pmb g'$ generate $I$ and $\varphi'=\chi^{-1}\varphi$
is a homogeneous syzygy matrix   for $\pmb g'$. One consequence of this last statement is the fact that the bottom row of $\varphi'$ generates the ideal 
$(g_1',\dots,g_{n-1}'): g_{n}'$.
  On the other hand, 
$$\bmatrix \alpha_1&\dots&\alpha_{n}\\g_1&\dots&g_{n}\endbmatrix \chi= \bmatrix 0&\dots&0&1\\g_1'&\dots&g_{n-1}'&g_{n}'\endbmatrix;$$ and  the ideal of $2\times 2$ minors of a matrix is unchanged by row and column operations; therefore, $\mathfrak P R$ is generated by $(g_1',\dots,g_{n-1}')$. We now see that
\begin{equation}\label{BBB}I_1(p\varphi)=I_1([0,\dots,0,1]\varphi')=(g_1',\dots,g_{n-1}'): g_{n}'= \mathfrak P R: I;\end{equation} 
hence, $I_1(p\varphi):I^\infty=(\mathfrak P R: I):I^{\infty}=\mathfrak P R:I^{\infty}$. \end{proof}


Recall that the  degree of a subscheme of projective space  is the multiplicity of its homogenous coordinate ring. 

\begin{corollary}\label{corEU}
Adopt the data of {\rm \ref{d1}}. Let $p$ be a rational point in $\mathbb P^{n-1}_k$.
\begin{enumerate}[\quad\rm(1)]
\item If $k$ is algebraically closed, then $p$  is in the image of $\, \Psi$ if and only if  $I_1(p\varphi):I^\infty\neq R$.
\item If $p$ is in the image of $\, \Psi$, then the degree of the fiber of $\, \Psi$ over $p$ is equal to
the multiplicity of $R/(I_1(p\varphi):I^\infty)$.
\item If the Krull dimension of $A$ is equal to $s$ and $p=\Psi(q)$ for $q$  a general point in $\mathbb P_k^{s-1}$, then  $e(R/(I_1(p\varphi):I^\infty))=[\Quot(B):\Quot(A)]$.
\end{enumerate}
\end{corollary}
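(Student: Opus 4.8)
The plan is to derive all three parts from the dictionary furnished by Observation~\ref{obsEU}, which identifies $I_1(p\varphi):I^\infty$ with $\P R:I^\infty$, where $\P$ is the homogeneous prime of $S$ corresponding to $p$. This lets me replace $I_1(p\varphi):I^\infty$ by $\P R:I^\infty$ throughout, and then read off the fiber via Definition~\ref{defEU}, the degree via the convention recalled before the corollary, and the field degree via Proposition~\ref{lemmaEU}.

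For part~(1) I would first record the set-theoretic content of $\P R$: a point $q\in\Proj(R)$ lies in $V(\P R)$ exactly when the two rows of the matrix in~(\ref{PR}) are proportional at $q$, that is, when either $q\in V(I)$ or $\Psi(q)=p$. Hence $V(\P R)\setminus V(I)$ is, as a set, the fiber $\Psi^{-1}(p)$ described in Remark~\ref{remEU}(1), and its closure is cut out by the saturation $\P R:I^\infty$. Since $k$ is algebraically closed, $p$ lies in $\operatorname{Im}\Psi$ precisely when this fiber is nonempty, i.e.\ when $\Proj(R/(\P R:I^\infty))\neq\emptyset$. The one delicate point—and the main obstacle—is to pass from ``nonempty $\Proj$'' to ``proper ideal'': in general $\Proj(R/J)$ can be empty with $J\neq R$, namely when $J$ is $\m$-primary. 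Here, however, $J=\P R:I^\infty$ is saturated with respect to $I$, and because $I\subseteq\m$ any $\m$-primary component of $\P R$ has radical containing $I$ and is therefore stripped off (see~\ref{saturate}). Thus $J$ has no $\m$-primary component, so $\Proj(R/J)=\emptyset$ forces $J=R$. This gives the chain $p\in\operatorname{Im}\Psi\iff\P R:I^\infty\neq R\iff I_1(p\varphi):I^\infty\neq R$.

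Part~(2) is then immediate from the definitions: by Definition~\ref{defEU} the fiber $\Psi^{-1}(p)$ is the scheme $\Proj(R/(\P R:I^\infty))$, and by the convention recalled just before the corollary its degree is the multiplicity of its homogeneous coordinate ring $R/(\P R:I^\infty)$. Rewriting this ideal as $I_1(p\varphi):I^\infty$ via Observation~\ref{obsEU} yields $\deg\Psi^{-1}(p)=e\bigl(R/(I_1(p\varphi):I^\infty)\bigr)$ with no further work.

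For part~(3) the task is to match the saturated ideal appearing here with the one in Proposition~\ref{lemmaEU}. Since $p=\Psi(q)$ lies in the image, $\P$ contains $\ker\psi$, and its image $\p=\psi(\P)A$ is exactly the homogeneous prime of $A$ attached to $\Psi(q)$; moreover $\P R$ and $\p R$ coincide, as both are generated by the $2\times 2$ minors in~(\ref{PR}), whose entries already lie in $A$. Hence $\P R:I^\infty=\p R:I^\infty$. As $q$ is general and $\dim A=s$, Proposition~\ref{lemmaEU} applies to this very $\p$ and gives $e\bigl(R/(\p R:I^\infty)\bigr)=[B:A]=[\Quot(B):\Quot(A)]$. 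Chaining these identifications through Observation~\ref{obsEU} produces $e\bigl(R/(I_1(p\varphi):I^\infty)\bigr)=[\Quot(B):\Quot(A)]$, completing the proof.
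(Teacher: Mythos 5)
Your proposal is correct and follows essentially the same route as the paper: the paper's proof is a three-line reduction to Observation~\ref{obsEU}, Remark~\ref{remEU}(1), the fact that an $I$-saturated proper ideal has no $\m$-primary component (hence nonempty $\Proj$), and Proposition~\ref{lemmaEU}, which is exactly the chain of identifications you spell out. Your elaborations (why ``nonempty $\Proj$'' is equivalent to ``proper ideal,'' and why $\P R=\p R$ via the minors of~(\ref{PR})) are precisely the details the paper leaves implicit, the latter being recorded in Remark~\ref{remEU}(2).
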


\begin{proof} 
For part (1) we use Observation~\ref{obsEU}, Remark~\ref{remEU}(1) and the fact  that the ideal  $I_1(p\varphi):I^\infty$  is not the unit ideal if and only it has dimension at least one. Item (2) follows from  Observation~\ref{obsEU}. To prove item (3) we apply Observation~\ref{obsEU} and  Proposition~\ref{lemmaEU}.
\end{proof}

More information  about the rational map $\Psi$ of  Data~\ref{d1} may be read from $\varphi$ when $I$ is an $\m$- primary ideal. In this case, $\Psi$ is actually a morphism and the image of $\Psi$ is a closed subscheme of $\mathbb P^{n-1}_k$.  Even more information can be read when $s=2$. In this case the saturation of the row ideal $I_1(p\varphi)$ is the ideal generated by  the $\operatorname{gcd}$ of the entries of the product matrix $p\varphi$  and the image of $\Psi$ is a curve $\mathcal C$.

\begin{corollary}\label{3.7} Adopt the data of {\rm \ref{d1}}. In addition assume that $I$ is $\m$-primary and $s=2$. Let $\mathcal C$ be the image of $\, \Psi$. The following statements hold.
\begin{enumerate}[\quad\rm(1)]
\item The point $p$ is on the curve $\mathcal C$ if and only if  $\operatorname{gcd}(I_1(p\varphi))$ is not in $k$.
\item If $p$ is on the curve $\mathcal C$, then the multiplicity of the fiber of $\, \Psi$ over $p$ is equal to
$$\deg (\operatorname{gcd}(I_1(p\varphi))).$$
\item If $p=\Psi(q)$ for $q$  a general point in $\mathbb P_k^1$, then  $\deg\operatorname(\operatorname{gcd}(I_1(p\varphi)))=[\Quot(B):\Quot(A)]$.
\item All entries of the matrix $\varphi$ have degree at least $[\Quot(B):\Quot(A)]$. 
\end{enumerate}\end{corollary}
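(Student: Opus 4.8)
The plan is to deduce parts~(1)--(3) from the corresponding parts of Corollary~\ref{corEU} by identifying the saturated row ideal $I_1(p\varphi):I^\infty$ with a principal ideal generated by a greatest common divisor, and then to read off part~(4) from the general-fiber equality in part~(3).

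First I would set up the dictionary available when $s=2$. Since $R=k[x_1,x_2]$, the saturation of a nonzero homogeneous ideal $J$ with respect to $\m$ is the principal ideal $(\gcd(J))$ (see~\ref{saturate}), and $\gcd(J)\in k$ exactly when $J:\m^\infty=R$. As $I$ is $\m$-primary we have $\sqrt I=\m$, whence $J:I^\infty=J:\m^\infty$ for every $J$; taking $J=I_1(p\varphi)$ and using Observation~\ref{obsEU} yields
$$\P R:I^\infty \;=\; I_1(p\varphi):I^\infty \;=\; \bigl(\gcd(I_1(p\varphi))\bigr).$$
Part~(1) now follows from Corollary~\ref{corEU}(1): after harmlessly passing to the algebraic closure of $k$ (neither the formation of the $\gcd$ nor the containment $p\in\mathcal C$ is affected), $p\in\mathcal C$ holds iff $I_1(p\varphi):I^\infty\ne R$, which by the displayed equation means precisely $\gcd(I_1(p\varphi))\notin k$. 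For part~(2), Corollary~\ref{corEU}(2) equates the multiplicity (equivalently, the degree) of the fiber with $e(R/(I_1(p\varphi):I^\infty))=e(R/(f))$ for $f=\gcd(I_1(p\varphi))$; as $f$ is a form in $k[x_1,x_2]$, the ring $R/(f)$ is one-dimensional and standard graded with Hilbert function eventually equal to $\deg f$, so $e(R/(f))=\deg f$. Part~(3) then follows by combining this computation with Corollary~\ref{corEU}(3), noting $\dim A=s=2$ because $I$ is $\m$-primary.

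The new content is part~(4). Fix a general point $q\in\mathbb P^1_k$ and put $p=\Psi(q)$, so part~(3) gives $\deg\gcd(I_1(p\varphi))=[\Quot(B):\Quot(A)]$. Let $c_j$ be the degree of the $j$-th column of the homogeneous Hilbert--Burch matrix $\varphi$, so that the $j$-th entry $h_j$ of $p\varphi$ is a form of degree $c_j$. Since $\gcd(I_1(p\varphi))=\gcd(h_1,\dots,h_{n-1})$ divides each nonzero $h_j$, one gets $[\Quot(B):\Quot(A)]\le c_j$ for every $j$ with $h_j\ne0$; thus it suffices to show that a general $q$ makes every $h_j$ nonzero. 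This is the step I expect to be the main obstacle, and it is where the hypothesis from Data~\ref{d1} that $g_1,\dots,g_n$ are $k$-linearly independent enters. If $h_j=\sum_i g_i(q)\,\varphi_{ij}$ vanished identically in $x_1,x_2$ for general $q$, then collecting coefficients would force $\sum_i a^{(\mu)}_i g_i=0$ in $R$ for each monomial $\mu$, where $a^{(\mu)}_i$ is the coefficient of $\mu$ in $\varphi_{ij}$; linear independence of the $g_i$ then forces all $a^{(\mu)}_i=0$, i.e. column $j$ of $\varphi$ is zero, contradicting that $\varphi$ has rank $n-1$ and hence no zero column. As there are only finitely many columns, a general $q$ makes all $h_j$ nonzero at once, giving $c_j\ge[\Quot(B):\Quot(A)]$ for every $j$; since each nonzero entry of $\varphi$ has degree equal to its column degree, part~(4) follows.

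In summary, parts~(1)--(3) are a formal assembly of Corollary~\ref{corEU} with the $s=2$ translation between $\m$-saturations and greatest common divisors, while the crux of part~(4) is the non-vanishing of all entries of $p\varphi$ for a general $q$, which rests squarely on the linear independence of $g_1,\dots,g_n$ together with the fact that a Hilbert--Burch matrix has no zero column.
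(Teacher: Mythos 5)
Your proposal is correct and follows essentially the same route as the paper: items (1)--(3) are read off from Corollary~\ref{corEU} via the $s=2$ dictionary between $\m$-saturations and greatest common divisors, and item (4) comes down to showing that for general $q$ every entry of $\Psi(q)\varphi$ is nonzero, which you and the paper both deduce from the $k$-linear independence of $g_1,\dots,g_n$ together with the fact that $\varphi$ has no zero column (the paper packages this as the non-degeneracy of $\operatorname{Im}\Psi$ in the proper linear subspace $V=\{\beta \mid \sum_i\beta_i\varphi_{ij}=0\}$, which is exactly your coefficient-extraction argument). No gaps.
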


\begin{proof} Items (1)-(3) follow from Corollary~\ref{corEU}. As for item (4),  notice that each column of $\varphi$ contains a non-zero element, hence the linear space 
\[V=\{ [\beta_1, \ldots, \beta_n] \in k^n \mid \sum \beta_i\varphi_{ij}=0\}
\] is a proper linear subspace of $\mathbb P_k^{n-1}$. Because $g_1,\dots,g_{n}$ are $k$-linearly independent  the image of $\Psi$ cannot be contained in $V$.  Thus  if $q$ is a general point in $\mathbb P_k^1$ every entry in the product $p\varphi=\Psi(q) \varphi$ is non-zero. Now item (4) follows from (3).
\end{proof}

\section{Reparameterization.}\label{Repar}

\begin{data}\label{data0} 
(1) Let $R=k[x_1, x_2]$ be a standard graded polynomial ring  over an  infinite perfect field $k$ with maximal homogeneous ideal $\m$,  
  $I$ be a height two  ideal of $R$ which is generated by homogeneous forms of the same degree $d$, $A$ and $B$ be the $k$-subalgebras $A=k[I_d]$ and $B=k[R_d]$ of $R$, and   $r$ be the degree of the field extension $\operatorname{Quot}(A)\subset \operatorname{Quot}(B)$. View $A$ and $B$ as standard graded $k$-algebras with maximal homogeneous ideals $\mathfrak m_A$ and $\mathfrak m_B$, respectively. Let $e=e(A)$.

\smallskip\noindent (2) Let $g_1,\dots,g_{n}$ be homogeneous forms in $R$ of degree $d$ which minimally generate the ideal $I$ and consider the morphism 
$\xymatrix{\Psi:\mathbb P^1_k\ar@{->}[r]& \mathbb P_k^{n-1}}$ which is defined by these forms; that is, $\Psi$ sends the  point $q$ of $\mathbb P_k^1$ to the point $[g_1(q):\cdots:g_{n}(q)]\in \mathbb P_k^{n-1}$. 
The corresponding homomorphism of $k$-algebras is
$\psi:S=k[T_1,\dots,T_{n}]\to R$, which sends $T_i$ to  $g_i$, for each $i$.

\smallskip\noindent (3) Let $\varphi$ be a minimal homogeneous Hilbert-Burch matrix for the row vector $[g_1,\dots,g_{n}]$. In other words, there exist shifts $D_j$ and a unit $u$ in $k$ so that 
\begin{equation}\label{HB}\xymatrix{0\to \bigoplus\limits_{j=1}^{n-1}R(d-D_j)\ar[r]^{\hskip15pt\varphi}&R(-d)^n\ar[rr]^{\ \ \ [g_1,\dots,g_{n}]}&&R}\end{equation} is 
a minimal homogeneous resolution of $R/I$ and 
$g_i$ is equal to $u(-1)^i$ times the determinant of $\varphi$ with row $i$ deleted, for $1\le i\le n$.  
\end{data}

In this section, we prove an algebraic analogue of a consequence of
Hurwitz' theorem. Let $r$ be the degree of the morphism 
$\Psi:\xymatrix{{\mathbb P}_k^{1}\ar@{->>}[r]&{\rm Im}(\Psi),}$
which at the level of coordinate
rings corresponds to the embedding $A
\hookrightarrow k[R_d] $. Thus $r$ is the degree of the field
extension ${\rm Quot}(A) \subset {\rm Quot}(
k[R_d] )$. 
 We 
show that there exist
homogeneous forms $f_1, f_2$ of degree $r$ in $R$ such that the entries
of the matrix $\varphi$ are homogeneous polynomials in the variables
$f_1$ and $f_2$. In particular, the ideal $I$ is extended from an ideal
in $k[f_1, f_2]$. Thus, replacing $k[x_1,x_2]$ by $k[f_1, f_2]$, we can reduce the
non-birational case to the birational one.
Furthermore, we provide an explicit description of $f_1$ and $f_2$ in
terms of $\varphi$. 
If $q_1$ and $q_2$ are general points  in $\mathbb P_k^1$, then   $f_i=\operatorname{gcd}(I_1(p_i\varphi))$ with $p_i=\Psi(q_i)$.

\medskip
The first half of the present section is devoted to the proof of the main result, Theorem~\ref{repar}. In the second half of the section, beginning with Corollary~\ref{H}, we use Theorem 4.2 to reduce the general (that is, non-birational) case to the birational one.

\begin{theorem}\label{repar} Adopt the data of {\rm\ref{data0}}. If $q_1$ and $q_2$ are general points  in $\mathbb P_k^1$, then $A\subset k[f_1, f_2]$, where  $f_i=\operatorname{gcd}(I_1(p_i\varphi))$ with $p_i=\Psi(q_i)$.
\end{theorem}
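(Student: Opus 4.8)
The plan is to realize $f_1$ and $f_2$ as two members of a \emph{pencil} of fibers and to factor $\Psi$ through the degree-$r$ map that this pencil determines. As in the proof of Proposition~\ref{lemmaEU}, no harm is done in assuming $k$ algebraically closed, since forming $\gcd(I_1(p_i\varphi))$ commutes with the (faithfully flat) base change to $\overline{k}$ and the containment $A\subseteq k[f_1,f_2]$ may be checked after this base change. First I would record what the forms $f_i$ are: by Corollary~\ref{3.7}(3) each $f_i=\gcd(I_1(p_i\varphi))$ is a form of degree $r$, and by Definition~\ref{defEU} together with Observation~\ref{obsEU} the principal ideal $(f_i)=\mathfrak{P}_iR:_RI^\infty$ is the saturated ideal of the scheme-theoretic fiber $\Psi^{-1}(p_i)$. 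Since $I$ has height two, $\Psi$ is a morphism onto a rational curve $\mathcal{C}=\operatorname{Im}\Psi$ with $\deg\mathcal{C}=e(A)=e$, and Observation~\ref{der} gives $d=e\,r$; in particular $r\mid d$.

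Next I would produce the factorization. The curve $\mathcal{C}$ is rational, so its normalization is $\mathbb{P}^1_k$, and $\Psi$ factors as $\mathbb{P}^1_k\xrightarrow{\ \nu\ }\mathbb{P}^1_k\xrightarrow{\ \overline{\mu}\ }\mathcal{C}$, where $\overline{\mu}$ is the normalization map (hence birational onto $\mathcal{C}$) and $\nu$ is a morphism of degree $\deg\Psi/\deg\overline{\mu}=r$. Write $\nu=[\nu_0:\nu_1]$ with $\nu_0,\nu_1$ coprime forms of degree $r$ in $R$, and write $\overline{\mu}=[G_1:\cdots:G_n]$ with the $G_j$ coprime forms of degree $e$. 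Comparing the two base-point-free representations of $\Psi=\overline{\mu}\circ\nu$ (both have $\gcd=1$, as $I$ has height two) shows that $g_i=c\,G_i(\nu_0,\nu_1)$ for a single nonzero constant $c$; in particular every $g_i$ lies in $k[\nu_0,\nu_1]$, so $A\subseteq k[\nu_0,\nu_1]$.

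It then remains to show $k[\nu_0,\nu_1]=k[f_1,f_2]$, for which it suffices to prove the equality of $k$-subspaces $\langle f_1,f_2\rangle=\langle \nu_0,\nu_1\rangle$ of $R_r$. Because $q_1,q_2$ are general, the points $p_i=\Psi(q_i)$ lie in the open locus over which $\overline{\mu}$ is an isomorphism, so $\overline{\mu}^{-1}(p_i)$ is a single reduced point $y_i=[a_i:b_i]\in\mathbb{P}^1_k$ and $\Psi^{-1}(p_i)=\nu^{-1}(y_i)=V(b_i\nu_0-a_i\nu_1)$ scheme-theoretically. Matching this against the description $(f_i)=\mathfrak{P}_iR:_RI^\infty$ above forces $f_i$ to be a nonzero scalar multiple of $b_i\nu_0-a_i\nu_1$, so $f_1,f_2\in\langle\nu_0,\nu_1\rangle$; and since $q_1,q_2$ are general we may take $y_1\neq y_2$, whence $f_1,f_2$ are linearly independent and span $\langle\nu_0,\nu_1\rangle$. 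Combining the two halves, $A\subseteq k[\nu_0,\nu_1]=k[f_1,f_2]$.

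I expect the main obstacle to be the rigorous setup of the factorization rather than the final linear-algebra step. Concretely, one must (i) invoke rationality of $\mathcal{C}$ and its normalization to produce $\nu$ of the correct degree, which is essentially the L\"uroth phenomenon underlying the whole section, and (ii) identify the \emph{scheme}-theoretic fiber $\Psi^{-1}(p_i)$ of Definition~\ref{defEU} with $\nu^{-1}(y_i)=V(b_i\nu_0-a_i\nu_1)$, taking care that the generality of $q_i$ places $p_i$ in the locus where $\overline{\mu}$ is an isomorphism and $y_i$ is reduced (this is where one uses that $k$ is infinite and perfect). A purely algebraic alternative, avoiding the normalization, would instead show directly that $f_1/f_2\in\Quot(A)$: one checks that this degree-zero rational function is constant on general fibers of $\Psi$, deduces the equality of the two index-$r$ subfields $k(f_1/f_2)$ and $\Quot(A)_{(0)}$ of $\Quot(B)_{(0)}$, and then upgrades this equality of fields to the containment $A\subseteq k[f_1,f_2]$ using that each $g_i$ is a form whose zero divisor is a sum of fibers.
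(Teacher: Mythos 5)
Your argument is correct, but it follows a genuinely different route from the paper's. You factor $\Psi$ through the normalization of its image, $\Psi=\overline{\mu}\circ\nu$ with $\nu=[\nu_0:\nu_1]$ of degree $r$ (this is where L\"uroth enters), deduce $A\subseteq k[\nu_0,\nu_1]$ by comparing the two coprime representations of $\Psi$, and then identify $\langle f_1,f_2\rangle=\langle\nu_0,\nu_1\rangle$ by recognizing each $(f_i)=\mathfrak P_iR:_RI^\infty$ as the ideal of the pencil member $\nu^{-1}(y_i)=V(b_i\nu_0-a_i\nu_1)$; the two points you flag as delicate (that $\overline{\mu}$ is an isomorphism over the general point $p_i$, and that $\gcd$ commutes with substitution of the base-point-free pair $(\nu_0,\nu_1)$, so that the saturation of $\mathfrak P_iR$ is really generated by $b_i\nu_0-a_i\nu_1$) are exactly the right ones and both go through, using Corollary~\ref{3.7}(3) applied to the birational map $\overline{\mu}$. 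The paper instead argues entirely within the syzygy framework: it first treats the case where $A$ is normal, using the descent of minimal multiplicity (Proposition~\ref{6.25}) to force $e=n-1$ and hence every entry of the Hilbert--Burch matrix $\varphi$ to have degree exactly $r$, and then runs a rank computation on $e+1$ generalized rows $\Psi(q_i)\varphi=f_i\lambda_i$ to show the forms $f_1,\dots,f_{e+1}$ span a space of dimension at most two; the general case is reduced to the normal one by passing to high Veronese subrings of $\overline{A}$ and comparing fibers via Remark~\ref{fiber}. Your approach is shorter and more conceptual, making the underlying L\"uroth phenomenon explicit and avoiding both the minimal-multiplicity input and the Veronese reduction; the paper's approach buys the extra structural information that the entries of $\varphi$ themselves can be taken in $k[f_1,f_2]$ with controlled degrees (used in Corollary~\ref{H}(6) and in \cite{CKPU}), and it stays purely algebraic, which matters for the way the rest of the section is phrased in terms of $\varphi$.
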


The proof of Theorem~\ref{repar} involves a number of steps. Before getting started we record some numerical information.
 In preparation for the first step in the proof of Theorem~\ref{repar}, we record a result from \cite[Cor.~5.2]{IEaI}.
Recall that a standard graded Cohen-Macaulay $k$-algebra $C$ over a
field $k$ with homogeneous maximal ideal $\m_C$  is said to have {\it minimal multiplicity} if its multiplicity
has the smallest possible value, namely, $e({C})=\mu(\m_C)-{\rm dim}({C})+1$. 
\begin{proposition}\label{6.25} Let $k$ be a perfect field and $A\subset B$ a homogeneous integral extension of standard graded $k$-domains. Further assume that $A$ is normal and Cohen-Macaulay. If $B$ has minimal multiplicity, then so does $A$. \end{proposition}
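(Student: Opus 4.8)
The plan is to translate minimal multiplicity into a statement about a linear system of parameters and then descend it from $B$ to $A$ by an ideal–contraction argument. Since the very definition of minimal multiplicity presupposes that $B$ is Cohen--Macaulay, both $A$ and $B$ are Cohen--Macaulay standard graded domains, and as $A\subseteq B$ is integral they share a Krull dimension, say $t$. First I would reduce to the case of an infinite field by the faithfully flat base change $-\otimes_k k(u)$ along a transcendental $u$: this is a regular (separable) field extension, so it preserves domain, Cohen--Macaulayness, normality of $A$, and—since $e$, $\mu(\mathfrak m)$ and $\dim$ are unchanged—minimal multiplicity, and the desired conclusion descends. With $k$ infinite I can choose $\ell_1,\dots,\ell_t\in A_1\subseteq B_1$ forming a linear system of parameters for $A$; because $A\subseteq B$ is module-finite of the same dimension, $\underline\ell=\ell_1,\dots,\ell_t$ is also a system of parameters for $B$. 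Since $\mathfrak m_C$ is generated in degree one, the special fiber ring of $\mathfrak m_C$ is $C$ itself, so $t$ linear forms generate a reduction of $\mathfrak m_C$ exactly when they are a system of parameters for $C$; hence $J=(\underline\ell)$ is a minimal reduction of both $\mathfrak m_A$ and $\mathfrak m_B$, and, both rings being Cohen--Macaulay, $\underline\ell$ is a regular sequence on each.

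Next I would record the standard reformulation. For a Cohen--Macaulay standard graded $k$-algebra $C$ of dimension $t$ with linear minimal reduction $\underline\ell$, one has $e(C)=\dim_k C/\underline\ell C$, while the Artinian quotient $\bar C=C/\underline\ell C$ satisfies $\dim_k\bar C_0=1$ and $\dim_k\bar C_1=\mu(\mathfrak m_C)-t$. Thus $C$ has minimal multiplicity precisely when $\bar C_i=0$ for all $i\ge 2$, that is, when $\mathfrak m_C^2\subseteq \underline\ell C$. Applying this to $B$ yields $\mathfrak m_B^2\subseteq JB$; since the inclusion is degree-preserving, $\mathfrak m_A\subseteq\mathfrak m_B$, whence $\mathfrak m_A^2\subseteq\mathfrak m_B^2\subseteq JB$ and therefore $\mathfrak m_A^2\subseteq JB\cap A$. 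Consequently the entire proposition reduces to the single contraction statement
\[
JB\cap A=JA,
\]
for then $\mathfrak m_A^2\subseteq JA$, which is exactly minimal multiplicity of $A$.

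To prove the contraction I would exhibit $A$ as a direct summand of $B$ as an $A$-module: if $\rho\colon B\to A$ is $A$-linear with $\rho|_A=\mathrm{id}$, then any $x=\sum_i\ell_i b_i\in JB\cap A$ satisfies $x=\rho(x)=\sum_i\ell_i\rho(b_i)\in JA$. Write $r=[\operatorname{Quot}(B):\operatorname{Quot}(A)]=\rank_A B$ (Observation~\ref{elem} gives $e(B)=r\,e(A)$). If $r=1$ the extension is birational and normality of $A$ forces $A=B$, so there is nothing to prove; thus assume $r\ge 2$. When $r$ is a unit in $k$—in particular whenever $\operatorname{char}k=0$—the splitting is furnished by the normalized trace $\tfrac1r\operatorname{tr}_{\operatorname{Quot}(B)/\operatorname{Quot}(A)}$: normality of $A$ guarantees that the trace of an element of $B$, being integral over $A$ and lying in $\operatorname{Quot}(A)$, actually lies in $A$, and $\tfrac1r\operatorname{tr}$ restricts to the identity on $A$ because $\operatorname{tr}(1)=r$.

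The main obstacle is the remaining case $\operatorname{char}k=p>0$ with $p\mid r$, where the normalized trace is unavailable. Here the perfectness of $k$ must enter in an essential way. I would recast the target homologically: the short exact sequence $0\to A\to B\to B/A\to 0$, together with the fact that $\underline\ell$ is a regular sequence on $B$, identifies the kernel of $A/JA\to B/JB$ with the Koszul homology $H_1(\underline\ell;B/A)$; as $B$ is a maximal Cohen--Macaulay $A$-module (being Cohen--Macaulay, finite over $A$, of dimension $t$), one has $\operatorname{depth}_A(B/A)\ge t-1$ a priori, and $H_1(\underline\ell;B/A)=0$ is equivalent to $B/A$ being maximal Cohen--Macaulay, i.e.\ by graded local duality to the injectivity of $H^t_{\mathfrak m}(A)\to H^t_{\mathfrak m}(B)$. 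This injectivity, for $A$ normal over a perfect field, is precisely what \cite[Cor.~5.2]{IEaI} supplies; granting it, the chain $\mathfrak m_A^2\subseteq JB\cap A=JA$ finishes the argument.
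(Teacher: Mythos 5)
The paper itself offers no proof of this proposition: it is recorded verbatim as a quotation of \cite[Cor.~5.2]{IEaI}, so there is no internal argument to compare yours against. Most of your reduction is correct and is, in substance, how such a statement is derived from the real content of \cite{IEaI}: the passage to an infinite field, the characterization of minimal multiplicity of a Cohen--Macaulay standard graded algebra $C$ as $\mathfrak m_C^2\subseteq \underline\ell\, C$ for a linear system of parameters $\underline\ell$, the observation that everything then hinges on the contraction $JB\cap A=JA$, the trace splitting when $r=[\Quot(B):\Quot(A)]$ is invertible in $k$, and the identification of $\ker(A/JA\to B/JB)$ with $H_1(\underline\ell;B/A)$ and hence with $\ker\bigl(H^t_{\mathfrak m}(A)\to H^t_{\mathfrak m}(B)\bigr)$ are all sound.

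The gap sits at the only place where the hypotheses ``$k$ perfect'' and ``$A$ normal'' must actually do work, namely $\operatorname{char}k=p>0$ with $p\mid r$. There you invoke ``\cite[Cor.~5.2]{IEaI}'' for the injectivity of $H^t_{\mathfrak m}(A)\to H^t_{\mathfrak m}(B)$ --- but Corollary~5.2 of that paper \emph{is} the proposition you are proving, so as written the argument is circular. The injectivity (equivalently, the surjectivity of the natural map $\omega_B\to\omega_A$, or the inequality $a(A)\le a(B)$) is the main theorem of \cite{IEaI}, and it is the genuinely hard content: establishing it for an inseparable extension, where no normalized trace exists, is exactly what that paper is about, and your proposal neither proves it nor cites it correctly. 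A secondary wrinkle: your base change to $k(u)$ destroys perfectness of $k$ in characteristic $p$, so if the key input requires $k$ perfect you must arrange the reduction to an infinite field differently (e.g.\ pass to $\overline{k}$, which is perfect and separable over $k$, and deal with possible reducibility), or verify that the quoted injectivity survives the extension $k\subset k(u)$. Until the local cohomology injectivity is either proved or attributed to the correct prior theorem, the positive-characteristic case is not established.
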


\begin{claim}\label{4.5} Adopt the data of {\rm \ref{data0}}. Theorem {\rm\ref{repar}} holds if the ring $A$ is normal.\end{claim}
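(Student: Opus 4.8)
The plan is to use normality of $A$ to recognize $A$ explicitly as a Veronese subring of a polynomial ring in two variables, and then to identify the forms $f_1,f_2$ as the pullbacks of the two coordinate functions under the resulting degree-$r$ cover. I would organize the argument in three steps. First I would fix the numerology and pass to a clean geometric picture. Since $I$ has height two in the two-dimensional ring $R$, it is $\m$-primary, so $\Psi$ is a morphism and $A\subset B$ is a module-finite (hence integral) extension; Observation~\ref{der} then gives $d=er$, where $e=e(A)$ and $r=[B:A]$. Because $A$ is a normal domain of dimension two it is Cohen--Macaulay, and $\mathcal C:=\Proj(A)=\operatorname{Im}\Psi$ is a normal, hence smooth, rational curve, so $\mathcal C\cong\mathbb P^1_k$.

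Second I would pin down the ring structure of $A$. Being normal and standard graded, $A$ coincides with its section ring $\bigoplus_{m\ge 0}H^0(\mathcal C,\mathcal O_{\mathcal C}(m))$, where $\mathcal O_{\mathcal C}(1)$ has degree $\deg\mathcal C=e(A)=e$; fixing coordinates $s,t$ on $\mathcal C\cong\mathbb P^1_k$ therefore identifies $A$ with the $e$-th Veronese $k[s,t]^{(e)}$. (As a cross-check, and to tie in the tool placed just before the claim: $B=k[R_d]$ has minimal multiplicity $e(B)=d=\mu(\m_B)-1$, and since $A\subset B$ is integral with $A$ normal Cohen--Macaulay, Proposition~\ref{6.25} shows $A$ has minimal multiplicity, i.e. $e=\mu(\m_A)-1=n-1$. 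Thus $A$ is a nondegenerate normal curve of minimal degree $e$ in $\mathbb P^{n-1}=\mathbb P^{e}$, namely the rational normal curve, giving the same conclusion $A\cong k[s,t]^{(e)}$.)

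Third -- and this is the crux -- I would match the intrinsic coordinates with the explicit greatest common divisors. Regarding $\Psi$ as a degree-$r$ morphism $\mathbb P^1=\Proj(R)\to\mathcal C\cong\mathbb P^1$, its pullback carries the coordinate functions $s,t$ to forms $\Psi^{*}s,\Psi^{*}t\in R_r$ of degree $r$, and the inclusion $A=k[s,t]^{(e)}\hookrightarrow R$ is exactly $G(s,t)\mapsto G(\Psi^{*}s,\Psi^{*}t)$; hence $A\subseteq k[\Psi^{*}s,\Psi^{*}t]$. It remains to see that $\Psi^{*}s$ and $\Psi^{*}t$ are scalar multiples of $f_1$ and $f_2$. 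I would choose $s,t$ to vanish at the two distinct points $p_1,p_2$ (distinct because $\Psi$ is finite and $q_1,q_2$ are general). Then $\Psi^{*}s$ is a degree-$r$ form vanishing precisely on the fiber $\Psi^{-1}(p_1)$; on the other hand, by~\ref{saturate} together with Observation~\ref{obsEU} and Corollary~\ref{3.7}, the form $f_1=\gcd(I_1(p_1\varphi))$ generates the saturated ideal of that same fiber and, by Corollary~\ref{3.7}(3), also has degree $r$. Two degree-$r$ forms on $\mathbb P^1$ cutting out the same fiber are proportional, so $\Psi^{*}s$ is a nonzero scalar multiple of $f_1$, and likewise for $\Psi^{*}t$ and $f_2$. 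Therefore $k[\Psi^{*}s,\Psi^{*}t]=k[f_1,f_2]$ and $A\subseteq k[f_1,f_2]$, as claimed.

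The main obstacle is precisely this last identification: translating the abstract description ``$\Psi$ is the degree-$r$ cover attached to $s,t$'' into the concrete forms produced by the row-ideal construction of Section~\ref{EU}. The delicate points are (i) ensuring that for general $q_i$ the fiber scheme of Definition~\ref{defEU} is the honest, degree-$r$ fiber cut out by $\Psi^{*}s$ -- which is where I use that $p_i$ is a general, hence smooth, point of $\mathcal C$ and that $\deg f_i=r$ by Corollary~\ref{3.7}(3) -- and (ii) controlling inseparability, for which the hypothesis that $k$ is perfect enters. I would expect the remaining verifications (that $A$ normal forces equality with its section ring, and the bookkeeping $er=d$) to be routine by comparison.
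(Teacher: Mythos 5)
Your argument is correct, but it takes a genuinely different route from the paper's. The paper stays entirely inside the syzygy matrix: Proposition~\ref{6.25} gives that $A$ has minimal multiplicity, so $\varphi$ is $(e+1)\times e$; the degree count $d=\sum_j D_j\ge er=d$ (via Corollary~\ref{3.7}(4) and Observation~\ref{der}) forces every entry of $\varphi$ to have degree exactly $r$; and a linear-algebra argument with $e+1$ general points --- writing $\Gamma\varphi=D\Lambda$ and cutting with a rank-$(e-1)$ scalar matrix annihilating one further generalized row --- shows that the $e+1$ forms $f_i$ span a $k$-vector space of dimension at most two containing all entries of $\varphi$, whence $A=k[I_d]\subset k[f_i,f_j]$. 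You instead identify $A$ intrinsically: normality plus $\dim A=2$ gives depth $2$, so $A$ equals its section ring, $\mathcal C=\Proj(A)$ is a smooth rational curve, hence $A\cong k[s,t]^{(e)}$ and $A\subset k[\Psi^*s,\Psi^*t]$, and the $f_i$ are recognized as pullbacks of coordinates vanishing at the $p_i$. Your route is more conceptual (it explains what $f_1,f_2$ \emph{are}) and needs Proposition~\ref{6.25} only as a cross-check, but it hinges on a point the paper never asserts: that the fiber of Definition~\ref{defEU} agrees with the honest scheme-theoretic fiber, i.e.\ with the divisor $\operatorname{div}(\Psi^*s)=\Psi^*(p_1)$, and not merely with its support (Remark~\ref{remEU}(1) gives only set-theoretic agreement). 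This is exactly where your proportionality step is decided, since equal support and equal degree $r$ do not force proportionality (compare $x^2y$ and $xy^2$). The needed verification is available from the proof of Observation~\ref{obsEU}: there $\mathfrak P R=(g_1',\dots,g_{n-1}')$ is the extension to $R$ of the maximal ideal of the point $p_1$ on $\mathcal C$, and since $\mathcal C$ is regular that maximal ideal is locally principal with generator $s$ (in a trivialization), so the two subschemes coincide as divisors; your worry about inseparability is immaterial, as this identity holds whether or not the fiber is reduced. What the paper's computation buys in exchange is the extra conclusion that every entry of the minimal Hilbert--Burch matrix is a $k$-linear combination of $f_1$ and $f_2$.
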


\begin{proof} Apply Proposition \ref{6.25}. In the present situation, $A$ and $B$ are normal of dimension $2$  and therefore Cohen-Macaulay, and $B$ has minimal multiplicity since $d=e(B)=\mu(\m_B)-1$. It follows that $A$ has minimal multiplicity as well, namely,
\[e=\mu(\m_A)-1=\mu(I)-1=n-1\, .\]
We conclude that the syzygy matrix $\varphi$ has size $e+1 \times e$. Each entry in  column $j$ of $\varphi$ is a form of degree $D_j$, in the language of \ref{HB}, and, according to Corollary~\ref{3.7}(4), $D_j \ge r$. The $g_i$ have degree $d$ and each one of them is the determinant of an $e \times e$ minor of $\varphi$;  thus, $d=\sum_{j=1}^eD_j \ge er=d$, where the last equality follows from Observation~\ref{der}. We conclude that each entry of $\varphi$ is a homogeneous form of degree $r$.

Let $q_1, \ldots, q_{e+1}$ be general points in $\mathbb P^1_k$. Hence  the points $\Psi(q_1), \ldots,\Psi( q_{e+1})$ are general on the curve $\mathcal C =\operatorname{Im}\Psi$, and since $\mathcal C$ is non-degenerate in $\mathbb P^e$, it follows that  $\Psi(q_1), \ldots,\Psi( q_{e+1})$ span $\mathbb P^e$. In other words, the $(e+1) \times (e+1)$ scalar matrix $\Gamma$, whose rows are $\Psi(q_i)$, is invertible. 
We apply Corollary \ref{3.7}(3) to see that each $\operatorname{gcd}(I_1(\Psi(q_i)\varphi))$ is a form in $R$  of degree $r$. Write $f_i=\operatorname{gcd}(I_1(\Psi(q_i)\varphi))$. Degree considerations show that  
 \begin{equation}\label{lambda-i}\Psi(q_i)\varphi=f_i \cdot \lambda_i,\end{equation}
 where $\lambda_i$ is a vector of scalars. It follows that \begin{equation}\label{Gamma}\Gamma \varphi= D \Lambda,\end{equation} where $D$ is the diagonal matrix whose diagonal entries are the forms $f_1, \cdots, f_{e+1}$, and  $\Lambda$ is the ${(e+1)\times e}$ scalar matrix whose rows are the $\lambda_i$.  We notice that $\Lambda$ has maximal rank $e$ because $\varphi$ does. 

Let $q$ be another general point  in $\mathbb P^1$. Write \begin{equation}\label{again}\Psi(q)\varphi=f\lambda,\end{equation} as in \ref{lambda-i}, where $f$ is a form of degree $r$ in $R$ and $\lambda$ is a row vector of scalars from $k$. 
There is an $e\times (e-1)$ matrix of scalars $\Delta$ so that $\lambda \Delta=0$  and $\Delta$ has full rank $e-1$. Combine
\ref{again}, the definition of $\Psi$ as given in \ref{data0}(2), and \ref {Gamma} to see that
$$0=f \lambda \Delta=\Psi(q) \varphi \Delta =[g_1(q),\dots,g_{e+1}(q)]\Gamma^{-1}D\Lambda \Delta.$$ The point $q$ in $\mathbb P^1$ is general; so the product 
$[g_1(q),\dots,g_{e+1}(q)]\Gamma^{-1}$ is a row vector of non-zero scalars  and the product 
$[g_1(q),\dots,g_{e+1}(q)]\Gamma^{-1}D$ is equal to $[f_1,\dots,f_{e+1}]D'$, where $D'$ is an invertible diagonal matrix of scalars. Thus,
$$0=[f_1,\dots,f_{e+1}]D'\Lambda \Delta,$$ where $D'\Lambda \Delta$ is an $(e+1)\times (e-1)$ matrix of scalars of rank $e-1$. We conclude that the vector space $V$ spanned by the homogeneous forms $f_1,\dots,f_{e+1}$ has dimension at most two. Select a generating set $f_i,f_j$ for $V$. Every entry of the Hilbert Burch matrix $\varphi$ is an element of the ring $k[f_i,f_j]$ (as can be seen from \ref{Gamma}); hence $A=k[I_d]\subset k[f_i,f_j]$ and the proof of Claim ~\ref{4.5} is complete.
 \end{proof}

Next we are ready to prove Theorem ~\ref{repar}

\begin{proof} We prove the theorem by reducing to the normal case. The integral closure $\overline{A}$ of $A$ in $\Quot(A)$ is a graded $k$-subalgebra of $B$, though not necessarily standard graded. However, $\A$ is a finitely generated graded module over the standard graded $k$-algebra $A$ and  hence  there exists a positive integer $s$ such that  the $t$-Veronese sub-ring of the integral closure of $A$, denoted $\overline{A}^{(t)}$, is standard graded for all $t\ge s$.
 We observe that $\overline{A}^{(t)}=\overline{A^{(t)}}$.

 We apply Claim \ref{4.5} to $\overline{A^{(s)}}$. For  general points $q_1$ and $q_2$ in $\mathbb P^1_k$ we obtain  forms $\widetilde{f_1}$ and $\widetilde{f_2}$ such that  $\overline{A^{(s)}}\subset k[\widetilde{f_1}, \widetilde{f_2]}$. Let 
 $\mathfrak{Q_i}$ be the prime ideals in $R$ corresponding to $q_i$. We have 
\[
 \widetilde{f_i}R=(({\mathfrak{Q_i}} \cap \overline{A^{(s)}})R) : \m^{\infty}= (({\mathfrak{Q_i}} \cap A^{(s)})R):\m^{\infty} =((\mathfrak{Q_i} \cap A)R):\m^{\infty}=f_iR\, ,
\]
where the first and last equality follow from Observation~\ref{obsEU} (and \ref{saturate}) and the second and third equality hold due to  Remark ~\ref{fiber}. We conclude that $ \widetilde{f_i}k=f_ik$ and therefore $\overline{A^{(s)}}\subset k[f_1, f_2]$. In the same manner we obtain  $\overline{A^{(s+1)}}\subset k[f_1, f_2]$. Hence for every non-zero homogeneous element $\alpha$ in $A$ we have \[\alpha=\frac{\alpha^{s+1}}{\alpha^s}\in  {\rm Quot}(k[f_1, f_2])\, .\]
On the other hand, $\alpha$ is integral over $A^{(s)}$, hence integral over $k[f_1, f_2]$. We  conclude that $\alpha \in k[f_1,f_2]$  since $k[f_1,f_2]$ is normal. 
\end{proof}

\begin{corollary}\label{H} 
Adopt the data of {\rm\ref{data0}}. Let $f_1$ and $f_2$ as in Theorem~{\rm\ref{repar}}.
\begin{enumerate}[\quad\rm(1)]
\item The elements $f_1$, $f_2$ are a regular sequence of forms of degree $r$ in $R$.
\item The ring $R'=k[f_1,f_2]$ is a polynomial ring in the variables $f_1$ and $f_2$.  After regrading, we view $R'$ as a standard graded $k$-algebra.
\item  The vector space $I_d$ is a subset of  $ R'$ and the ideal $I'=I_dR'$ is  generated by forms of  degree $d/r$ in $R'$.
\item The extension $A=k[I'_{d/r}]\subset k[R'_{d/r}]$ is birational.
\item  The ring  $R$ is a  free $R'$-module of  rank $r^2$ and 
$I=I'R$.
\item There exists a minimal homogeneous Hilbert-Burch matrix for the row vector $[g_1,\dots,g_{n}]$  whose entries are in the ring $k[f_1,f_2]$.  \end{enumerate}
\end{corollary}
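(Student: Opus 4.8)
The plan is to establish the six assertions in sequence, extracting from the proof of Theorem~\ref{repar} the key fact that $f_iR$ is the saturated ideal defining the fiber $\Psi^{-1}(p_i)$, and then feeding this into Noether normalization, miracle flatness, and the Hilbert--Burch theorem. First, for (1), the proof of Theorem~\ref{repar} (via Observation~\ref{obsEU} and \ref{saturate}) gives $f_iR = ((\mathfrak{Q}_i \cap A)R):\m^{\infty}$, so $f_i$ is a defining form of the fiber of $\Psi$ over $p_i=\Psi(q_i)$, of degree $r$ by Corollary~\ref{3.7}(3). For general distinct $q_1,q_2$ the images $p_1\neq p_2$ are distinct points of $\mathcal C=\operatorname{Im}\Psi$, so the fibers $\Psi^{-1}(p_1)$ and $\Psi^{-1}(p_2)$ have disjoint supports. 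Passing to $\overline{k}$, where binary forms split into linear factors, $f_1$ and $f_2$ share no common zero and hence no common factor; thus $(f_1,f_2)$ has height $2$ and $f_1,f_2$ is a regular sequence in the Cohen--Macaulay ring $R$. For (2), since $f_1,f_2$ is then a homogeneous system of parameters of positive degree, $R$ is module-finite over $k[f_1,f_2]$ by graded Noether normalization, whence $\dim k[f_1,f_2]=\dim R=2$, the forms $f_1,f_2$ are algebraically independent, and $R'=k[f_1,f_2]$ is a polynomial ring; I regrade it so that $f_1,f_2$ have degree $1$.

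For (3), Theorem~\ref{repar} yields $A=k[I_d]\subset R'$, so $I_d\subset R'$; because $\deg_R f_i=r$ and $r\mid d$ with $d/r=e$ by Observation~\ref{der}, the forms of $R$-degree $d$ lying in $R'$ are exactly $R'_{d/r}$, so $I'=I_dR'=(g_1,\dots,g_n)R'$ is generated by the forms $g_i$ of $R'$-degree $d/r$, and a degree count shows $(I')_{d/r}=I_d$ so that $k[I'_{d/r}]=A$. For (4), set $B'=k[R'_{d/r}]$; since the $g_i$ lie in $R'$ as forms of $R'$-degree $d/r$, the map $\Psi$ factors as $\mathbb P^1_{[x]}\xrightarrow{\pi}\mathbb P^1_{[f]}\xrightarrow{\Psi'}\mathbb P^{n-1}$, where $\pi=[f_1:f_2]$ is a finite self-map of $\mathbb P^1$ of degree $r$ and $\Psi'$ is defined by the forms $G_i$ with $g_i=G_i(f_1,f_2)$. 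Multiplicativity of degrees in the tower $K(\mathcal C)\subset K(\mathbb P^1_{[f]})\subset K(\mathbb P^1_{[x]})$ together with $\deg\Psi=[B:A]=r$ (Remark~\ref{degree}) and $\deg\pi=r$ forces $\deg\Psi'=1$, i.e. $A\subset B'$ is birational.

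For (5), $R'$ is regular and $R$ is Cohen--Macaulay and module-finite over $R'$, so miracle flatness makes $R$ a free graded $R'$-module; comparing Hilbert series via $\tfrac{1}{(1-t)^2}=\tfrac{\sum_j t^{a_j}}{(1-t^r)^2}$ gives $\sum_j t^{a_j}=(1+t+\cdots+t^{r-1})^2$, so the rank is $r^2$, while $I=I'R=(g_1,\dots,g_n)R$ is immediate. For (6), the ideal $I'$ is $\mathfrak m_{R'}$-primary of grade $2$ (faithful flatness gives $\dim R'/I'=\dim R/I'R=\dim R/I=0$), hence perfect, and $g_1,\dots,g_n$ remain minimal generators of $I'$ since a $k$-linear dependence modulo $\mathfrak m_{R'}I'$ would survive modulo $\m_R I$ in $R$. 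The Hilbert--Burch theorem then produces a minimal homogeneous $n\times(n-1)$ matrix $\varphi'$ over $R'$ whose signed maximal minors are the $g_i$ up to a unit of $k$; applying the flat base change $-\otimes_{R'}R$ to the resolution of $R'/I'$ yields a minimal homogeneous Hilbert--Burch resolution of $R/I=R/I'R$ with the same matrix $\varphi'$, whose entries lie in $k[f_1,f_2]$.

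The crux of the argument is (1) together with the degree bookkeeping underlying (4) and (5): one must verify that $f_1,f_2$ genuinely form a regular sequence, which rests on the disjointness of general fibers and the stability of the no-common-factor property under passage to $\overline{k}$, and one must then reconcile the two a priori different degrees, namely $\deg\pi=r$ for the self-map of $\mathbb P^1$ and $\operatorname{rank}_{R'}R=r^2$ for the associated map of affine cones. Once the regular-sequence property is secured, (2), (5), and (6) follow formally from Noether normalization, miracle flatness, and Hilbert--Burch, while (3) and (4) are grading bookkeeping anchored by the identity $d=er$.
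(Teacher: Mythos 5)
Your proof is correct and follows essentially the same route as the paper's: Theorem~\ref{repar} gives $I_d\subset k[f_1,f_2]$, from which the regular sequence, the freeness of $R$ over $R'$ of rank $r^2$, and the birationality in (4) all follow by the same degree bookkeeping (your factorization $\Psi=\Psi'\circ\pi$ and the tower of function fields is exactly the paper's field-degree diagram in disguise, and your miracle-flatness/Hilbert--Burch details for (5) and (6) are what the paper leaves as ``clear''). The only local difference is in (1), where the paper obtains the regular sequence in one line from $I\subseteq(f_1,f_2)R$ together with $\operatorname{ht} I=2$, rather than from the disjointness of fibers over distinct general points; both arguments are valid.
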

\begin{proof}
The forms $f_1$ and $f_2$ have degree $r$ according to Corollary~\ref{3.7}(3). From Theorem~\ref{repar} we obtain $I_d\subset R'=k[f_1,f_2]$. In particular,   $I\subset (f_1, f_2)R$ because $I=I_dR$. It follows that $f_1$, $f_2$ form a regular sequence  in $R$ and hence $R$ is a free module over $R'=k[f_1,f_2]$, necessarily  of rank $r^2$. Parts (1) and (5) are established. At this point, items (2),  (3), and (6) are clear. 

To prove (4) we use the following diagram, where   the relevant field degrees
are displayed:
\[\xymatrix{ &R\ar@{-}[dr]^{r^2}\\ 
&&R'\ar@{-}[d]_{d/r}\\
k[R_d]  \ar@{-}[uur]^{d}&&k[R'_{d/r}]\\
&A\ar@{-}[ul]_{r}\ar@{-}[ur] }\]

The diagram shows $\left[k[R'_{d/r}]:A\right]=1$.
\end{proof}

In the following corollary we gather important numerical consequences of Theorem~\ref{repar}.
\begin{corollary}\label{corn} Adopt the data of {\rm \ref{data0}}.   
\begin{enumerate}[\quad\rm(1)]
\item  $[B:A] \mid
{\rm deg} [\varphi]_{ij}$.
\item $[B:A] \mid {\rm gcd}(\mbox{column degrees of \,} \varphi)$.
\item   If {\rm gcd}$($column degrees of
$\varphi)=1$, then $A \subset B$ is a birational extension; in other words,
$\Psi$ is birational onto its image. 
\item  If  every entry of $\varphi$ has the same degree and that degree is a prime integer, then the morphism $\Psi$ is birational onto its image if and only if $\mu(I_1(\varphi))\ge 3$.
\end{enumerate}
\end{corollary}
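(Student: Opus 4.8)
The plan is to deduce everything from Theorem~\ref{repar} and Corollary~\ref{H}, which produce forms $f_1,f_2$ of degree $r=[B:A]$ with $A\subset R'=k[f_1,f_2]$ and, by Corollary~\ref{H}(6), a minimal homogeneous Hilbert--Burch matrix $\varphi'$ all of whose entries lie in $R'$. Since the target of a minimal presentation of $R/I$ is $R(-d)^n$, every nonzero entry in a given column $j$ is homogeneous of a single degree $D_j$, and the numbers $D_1,\dots,D_{n-1}$ are graded Betti numbers of $R/I$, hence the same for $\varphi$ and for $\varphi'$.

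For part~(1) I would observe that each nonzero column-$j$ entry of $\varphi'$ is a homogeneous element of $k[f_1,f_2]$, where $\deg f_1=\deg f_2=r$; its degree $D_j$ is therefore a multiple of $r$. As $\deg[\varphi]_{ij}=D_j$ for every nonzero entry, this is exactly $r\mid\deg[\varphi]_{ij}$. Part~(2) follows at once, since $r\mid D_j$ for all $j$ gives $r\mid\gcd(D_1,\dots,D_{n-1})$; and part~(3) is the specialization $\gcd(D_j)=1\Rightarrow r\mid 1\Rightarrow r=1$, which by Remark~\ref{degree} is precisely birationality of $A\subset B$.

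For part~(4), since all $D_j$ equal the prime value $D$ and $r\mid D$, I would first note $r\in\{1,D\}$, so that (birationality being $r=1$) it suffices to show $r=D\iff\mu(I_1(\varphi))\le 2$. Because all column degrees coincide, the minimal matrices differ by scalar row and column operations, so $\mu(I_1(\varphi))=\dim_k I_1(\varphi)_D$ is unchanged and may be computed from $\varphi'$. If $r=D$, each entry is a degree-$r$ homogeneous element of $k[f_1,f_2]$, hence a $k$-combination of $f_1,f_2$, whence $I_1(\varphi)_D\subset\langle f_1,f_2\rangle_k$ and $\mu(I_1(\varphi))\le 2$. Conversely, assume $r=1$ and, toward a contradiction, $\mu(I_1(\varphi))\le 2$; then every entry lies in a two-dimensional space $\langle h_1,h_2\rangle_k$ of degree-$D$ forms, so each maximal minor $g_i$ is a homogeneous polynomial of degree $n-1$ in $h_1,h_2$ and $A=k[g_1,\dots,g_n]\subset k[h_1,h_2]$. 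The factor $c=\gcd(h_1,h_2)$ divides every entry of $\varphi$, hence $c^{n-1}$ divides each $g_i$ and so divides $\gcd(g_1,\dots,g_n)=1$ (the $g_i$ are coprime because $\operatorname{ht} I=2$); thus $c\in k$ and $h_1,h_2$ are coprime of degree $D\ge 2$. Coprime degree-$D$ forms $h_1,h_2$ define a rational map of degree $D$, and since $A\subset k[h_1,h_2]$ the map $\Psi$ factors through it, whence $[B:A]\ge D\ge 2$, contradicting $r=1$.

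The main obstacle is this converse half of part~(4): the forward implications and the implication $r=D\Rightarrow\mu(I_1(\varphi))\le 2$ are formal once Theorem~\ref{repar} is in hand, but excluding $\mu(I_1(\varphi))\le 2$ in the birational case needs the genuinely geometric observation that packing all entries into two forms $h_1,h_2$ routes the whole parametrization through the degree-$D$ map $[h_1:h_2]$. The points requiring care are the coprimality $\gcd(h_1,h_2)=1$ (obtained from $\operatorname{ht} I=2$, via $c^{n-1}\mid\gcd(g_i)$) and the fact that coprime degree-$D$ forms yield a field extension of degree exactly $D$; the primality of $D$ is what reduces $r$ to the two values $\{1,D\}$ and makes the dichotomy clean.
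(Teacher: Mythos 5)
Your parts (1)--(3) follow exactly the route the paper takes: the paper's proof reads ``Items (1) and (2) follow from items (1) and (6) of Corollary~\ref{H}; item (3) is now clear,'' which is precisely your observation that a minimal Hilbert--Burch matrix can be chosen with entries in $k[f_1,f_2]$, $\deg f_i=r$, so every (nonzero, necessarily homogeneous of degree $D_j$) entry has degree divisible by $r=[B:A]$. Where you genuinely diverge is part (4): the paper gives no argument at all and simply cites \cite[0.11]{CKPU}, whereas you supply a self-contained proof, and your proof is correct. The forward half ($r=D\Rightarrow\mu(I_1(\varphi))\le 2$) is formal from Theorem~\ref{repar}, and your reduction of $\mu(I_1(\varphi))$ to $\varphi'$ is legitimate because, all column degrees being equal, the degree-zero base changes relating two minimal Hilbert--Burch matrices are scalar, so $I_1(\varphi)=I_1(\varphi')$. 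The converse is the substantive point, and your chain is sound: $\mu(I_1(\varphi))\le 2$ puts all entries in $\langle h_1,h_2\rangle_k$, the maximal minors become degree-$(n-1)$ forms in $h_1,h_2$, the coprimality $\gcd(h_1,h_2)=1$ follows from ${\rm ht}\, I=2$ via $c^{n-1}\mid\gcd(g_i)$, and then $K({\rm Im}\,\Psi)\subset k(h_1/h_2)\subset k(x_1/x_2)$ with $[k(x_1/x_2):k(h_1/h_2)]=D\ge 2$ forces $[B:A]\ge D$, contradicting birationality. (The case $\mu(I_1(\varphi))\le 1$ is dispatched even faster, since then $h_1^{n-1}$ would divide every $g_i$.) What your approach buys is independence from the external reference; what the paper's citation buys is brevity and consistency with the source where this numerical criterion was first recorded.
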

\begin{proof} Items (1) and (2) follow from items (1) and (6) of Corollary~\ref{H}.  Item  (3) is now clear.  Item (4) is \cite[0.11]{CKPU}. 
\end{proof}

\begin{observation}\label{min-ideal-col-deg} Adopt the data of {\rm \ref{data0}} and assume that $I$ is a monomial ideal.  Then \[[B:A]={\rm gcd}(\mbox{column degrees of \,} \varphi)\, .\]
\end{observation}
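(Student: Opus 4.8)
The plan is to exploit the explicit combinatorics of monomial ideals in two variables. First I would normalize notation: since $I$ is a height-two monomial ideal minimally generated in degree $d$, its minimal generators are monomials $g_i = x_1^{d-c_i} x_2^{c_i}$, which I order so that $0 = c_1 < c_2 < \cdots < c_n = d$. The boundary equalities $c_1 = 0$ and $c_n = d$ are forced by $\operatorname{ht} I = 2$: if, say, no generator were a power of $x_2$, then $x_1$ would divide every $g_i$, so $\gcd(g_1,\dots,g_n)\neq 1$ and $I$ would have height one.

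Second, I would read the column degrees of $\varphi$ off the monomial resolution. For a monomial ideal in $k[x_1,x_2]$ the minimal first syzygies are precisely the consecutive ones, so (up to signs and reordering) the $j$-th column of $\varphi$ has $x_2^{c_{j+1}-c_j}$ in row $j$, $-x_1^{c_{j+1}-c_j}$ in row $j+1$, and zeros elsewhere. Thus column $j$ is homogeneous of degree $D_j = c_{j+1}-c_j$, whence
$$\gcd(\text{column degrees of } \varphi) = \gcd_{1\le j\le n-1}(c_{j+1}-c_j) = \gcd_{i,j}(c_i - c_j),$$
the final equality holding because the consecutive differences $c_{j+1}-c_j$ and the full set of differences $c_i - c_j$ generate the same subgroup of $\mathbb{Z}$.

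Third --- and this is the crux --- I would compute $[B:A]$ on the nose. Setting $t = x_2/x_1$, every form of degree $d$ is $x_1^d t^j$, so $\Quot(B) = k(x_1^d, t)$; and since $c_1 = 0$ gives $x_1^d \in A$, the ratios of the generators $x_1^d t^{c_i}$ produce exactly the field $k(t^{e})$ with $e := \gcd_{i,j}(c_i-c_j)$, so that $\Quot(A) = k(x_1^d, t^{e})$. As $x_1^d$ and $t$ are algebraically independent over $k$, one gets $[B:A] = [k(x_1^d,t):k(x_1^d,t^{e})] = [k(t):k(t^{e})] = e$. Comparing with the second step yields $[B:A] = e = \gcd(\text{column degrees of }\varphi)$.

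It is worth noting that the divisibility $[B:A] \mid \gcd(\text{column degrees})$ is already furnished by Corollary~\ref{corn}(2), so the only genuinely new content is the reverse, i.e.\ computing the field degree exactly rather than up to a divisor; this is precisely what the torus description in the third step provides, and it is the main (if mild) obstacle. As an alternative that stays within the reparameterization framework, one may instead take $p = \Psi(q)$ for a general $q = [q_1:q_2]$: the $j$-th entry of $p\varphi$ equals, up to the nonzero scalar $q_1^{d-c_{j+1}}q_2^{c_j}$, the binary form $(q_1 x_2)^{D_j} - (q_2 x_1)^{D_j}$, and the elementary identity $\gcd(u^a - v^a, u^b - v^b) = u^{\gcd(a,b)} - v^{\gcd(a,b)}$ gives $f = \gcd(I_1(p\varphi)) = (q_1 x_2)^{e} - (q_2 x_1)^{e}$; since $\deg f = [B:A]$ by Corollary~\ref{3.7}(3), this again identifies $[B:A]$ with $e$.
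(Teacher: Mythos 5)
Your proposal is correct. Its main line is, at heart, the same computation the paper performs, but with the key step made self-contained: the paper quotes as ``well known'' the formula $[R:A]\,\mathbb Z = I_2$ of the $2\times n$ exponent matrix and then reduces that determinantal ideal to $d\cdot\gcd(D_1,\dots,D_{n-1})\mathbb Z$, dividing by $[R:B]=d$ at the end; you instead substitute $t=x_2/x_1$ and compute $\Quot(B)=k(x_1^d,t)$ and $\Quot(A)=k(x_1^d,t^{e})$ directly, which amounts to proving that lattice-index fact in this case rather than citing it. Both routes reduce to the observation that the consecutive differences $c_{j+1}-c_j$ and the full set of differences generate the same subgroup of $\mathbb Z$, and both use that height two forces $c_1=0$ and $c_n=d$ so that the column degrees of the (consecutive-syzygy) Hilbert--Burch matrix are exactly these differences. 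Your second argument --- evaluating the generalized row $\Psi(q)\varphi$ at a general $q$, factoring each entry as a scalar times $(q_1x_2)^{D_j}-(q_2x_1)^{D_j}$, and invoking the identity $\gcd(u^a-v^a,u^b-v^b)=u^{\gcd(a,b)}-v^{\gcd(a,b)}$ together with Corollary~\ref{3.7}(3) --- is genuinely different from the paper's proof and has the virtue of staying entirely inside the row-ideal machinery of Sections~\ref{EU} and \ref{Repar}; it is a nice illustration of that machinery in the monomial case. Either version is complete.
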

\begin{proof}
We may assume that  $g_i=x^{a_i}y^{d-a_i}$ with $0=a_1 < a_2 <  \cdots < a_n=d$. Hence in the language of ~\ref{HB}, 
\[D_j=a_{j+1}-a_j \qquad \mbox{for} \qquad 1 \le j \le n-1\, .
\]
On the other hand, as  is well known, 
\[ [R:A] \, \mathbb Z= I_2
\left( \begin{matrix}
a_1 & a_2 & \ldots & a_n \\
d-a_1 & d-a_2 & \ldots  & d-a_n
\end{matrix}
\right)\, .
\]
After row and column operations on the matrix, this determinantal ideal becomes 
\[d\cdot I_2\left( \begin{matrix}
0 & a_2-a_1 & \ldots & a_n-a_{n-1} \\
1 & 0 & \ldots  & 0
\end{matrix}
\right)=d \cdot {\rm gcd}(D_1, \ldots, D_{n-1}) \,  \mathbb Z\, .
\]
It follows that \[[B:A]=\frac{[R:A]}{d}={\rm gcd}(D_1, \ldots, D_{n-1})= {\rm gcd}(\mbox{column degrees of \,} \varphi)\, .\]
\end{proof}

\begin{remark} Adopt the data of {\rm \ref{data0}}.  If $I$ is a monomial ideal \[\operatorname {gcd}(\text{column degrees of
$\varphi$})=1\iff A \subset B \text{ is a birational extension}.\]
However, the direction $(\Leftarrow)$ is far from true if
 the ideal $I$ is not monomial; see, for example, Cor.~\ref{corn}(4) or all of \cite{CKPU}. 
\end{remark}


The following theorem summarizes some of the results of  the present section that were used in \cite{CKPU}. It was  recorded  as \cite[Thm.~0.10]{CKPU}. (The present statement is the correct one.)

\begin{theorem}\label{2}  Let  $R$ be the standard graded polynomial ring $k[x,y]$, with $k$ an infinite perfect field,  
 $I$ be a height two  ideal of  $R$ generated by forms $g_1,\dots,g_n$
     of degree $d$, and $\varphi$   be a homogeneous Hilbert-Burch matrix for the row vector  $[g_1,\dots,g_n]$.
       If $A$ and $B$  are  the standard graded $ k$-algebras $A= k[I_d]$ and  
$B=k[R_d]$, $r$ is the degree of the field extension $[\operatorname{Quot}(B):\operatorname{Quot}(A)]$, and $e$ is the multiplicity of  $A$,  then the following statements hold.
\begin{enumerate}[\quad\rm(1)]
\item $re=d$.
\item The morphism $\Bbb P^1_k\to \Bbb P_k^{n-1}$, which is given by $q\mapsto  [g_1(q):\dots:g_n(q)]$, is birational onto its image if and only if $e=d$.
\item There exist 
forms $f_1$ and $f_2$  of degree $r$ in $R$ such that $A\subset k[f_1,f_2]$. 
 In particular, $I$ is extended from an ideal in
$ k[f_1,f_2]$ in the sense that ${I=(I\cap  k[f_1,f_2])R}$.    Furthermore,   we can choose $f_i=\gcd(I_1(p_i\varphi))$ where $p_i=[g_1(q_i):\dots:g_n(q_i)]$
  for general points ${q_1}$ and  ${q_2}$  in
$\Bbb P^1_k$.
\end{enumerate}\end{theorem}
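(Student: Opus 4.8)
The plan is to prove the three assertions of Theorem~\ref{2} by translating each into the language already developed and then invoking the corresponding result. For part (1), the equation $re=d$ is essentially Observation~\ref{der} specialized to $s=2$: since $I$ is a height two ideal in the two-dimensional ring $R=k[x,y]$, the ideal $I$ is $\m$-primary, so Observation~\ref{der} gives $d^{s-1}=e(A)[B:A]$, which for $s=2$ reads $d=e\cdot r$ once I identify $r=[B:A]$ (this identification is exactly Data~\ref{d1} together with Remark~\ref{degree}, noting $r=[\Quot(B):\Quot(A)]$). So part (1) requires only that I verify the $\m$-primary hypothesis and unwind the notation.

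For part (2), I would argue that birationality of the morphism $\Psi$ onto its image is, by definition~\ref{((0))}, the statement that $[\Quot(B):\Quot(A)]=r=1$. Combining this with part (1), $r=1$ is equivalent to $e=d$. Thus part (2) is an immediate corollary of part (1): birational $\iff r=1 \iff d=re=e$. The only care needed here is to confirm that $\Psi$ is indeed a morphism (not merely a rational map) and that $A=k[I_d]$ is genuinely the homogeneous coordinate ring of the image, so that Remark~\ref{degree} applies to identify the degree of $\Psi$ with the field degree $r$; both are guaranteed by the $\m$-primary condition coming from $\operatorname{ht} I=2$ in a two-dimensional ring.

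For part (3), the existence of forms $f_1,f_2$ of degree $r$ with $A\subset k[f_1,f_2]$ is precisely Theorem~\ref{repar}, and the explicit description $f_i=\gcd(I_1(p_i\varphi))$ with $p_i=\Psi(q_i)$ for general $q_1,q_2$ is built into the statement of that theorem via Data~\ref{data0}. The supplementary claim that $I=(I\cap k[f_1,f_2])R$ follows from Corollary~\ref{H}: part~(3) of that corollary places $I_d$ inside $R'=k[f_1,f_2]$ and part~(5) gives $I=I'R$ where $I'=I_dR'=I\cap R'$ after identifying $I_d R'$ with $I\cap k[f_1,f_2]$ at the relevant degree. So part (3) is a direct citation of Theorem~\ref{repar} and Corollary~\ref{H}, with a short verification that the ideal-theoretic extension formula matches the stated one.

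The main obstacle is not in any single deduction—each part reduces to an already-proved statement—but rather in confirming that the hypotheses of Theorem~\ref{2} exactly match the data conventions of \ref{d1} and \ref{data0}, in particular that $I$ being height two in $k[x,y]$ forces $I$ to be $\m$-primary (so that $A\subset B$ is module-finite and the field extension is finite), and that $k$ being \emph{perfect} is the hypothesis consumed by Proposition~\ref{6.25} inside the proof of Theorem~\ref{repar}. Since this theorem is explicitly described as a summary of results already established in the section, I expect the proof to be a short matter of assembling parts (1)--(3) from Observation~\ref{der}, Definition~\ref{((0))}, Theorem~\ref{repar}, and Corollary~\ref{H}, with no new computation required.
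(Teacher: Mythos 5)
Your proposal is correct and follows exactly the route the paper takes: the authors' proof reads, in its entirety, ``Item (1) is Observation~\ref{der}; (2) follows from (1) and the definition of birationality; and (3) is Theorem~\ref{repar} and Corollary~\ref{H}.'' Your additional checks (that $\operatorname{ht} I=2$ in $k[x,y]$ makes $I$ $\m$-primary, and that perfectness of $k$ is consumed via Proposition~\ref{6.25} inside Theorem~\ref{repar}) are accurate glosses on the same argument rather than a departure from it.
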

\begin{proof} Item (1) is Observation~\ref{der}; (2) follows from (1) and the definition of birationality; and (3) is Theorem~\ref{repar} and Corollary~\ref{H}.\end{proof}

\section{Rees rings}\label{Reesrings}

\begin{data}\label{d2} Let  $R$ be a standard graded domain over a field $k$, of dimension $s$, with maximal homogeneous ideal $\m$. Let $I$ be a homogeneous  ideal  in $R$ generated by  forms  of degree $d$, $A$ be the ring $k[I_d]$, and $B$ be the Veronese ring $k[R_d]$. After regrading, we view $A\subset B$ as standard graded $k$-algebras. 
\end{data}

In this section, we relate properties of the extension $A\subset B$ to properties  of the corresponding inclusions of Rees algebras $\R(I)\subset \R(\m^d)$. The extension of Rees algebras has the advantage that it is automatically birational.

\begin{remark}\label{R1} Adopt the data of {\rm \ref{d2}}. The injective map of graded $k$-algebras \[A \cong k[I_dt] \hookrightarrow R[It]=\R(I)\] induces an isomorphism between the ring $A$ and the special fiber ring $ \R(I)/\m \R(I)$; see, for example, (\ref{typical}). The ring $A$ is a domain; hence, $\m \R(I)$ is a prime ideal. Since  ${\rm dim } \, \R(I)$ is equal to $s+1$ (see, for example, \cite[5.1.4]{SH}), we obtain 
\[{\rm dim}\, A =  s+1 - {\rm dim} \,  \R(I)_{\m \R(I)}\, .\]
\end{remark}

In the next result we show that the degree of the morphism $\Psi: \Bbb P_k^{s-1} \longrightarrow {\rm Im}\,  \Psi$ is equal to the multiplicity of the local ring $\R(I)_{\m \R(I)}$. In addition, we relate the degree of the morphism $\Psi$, the degree of the image, and the $j$-multiplicity of the ideal $I$. The notion of $j$-multiplicity and the importance of this result is discussed in the introduction and in Corollary \ref{nov24}.

\begin{theorem}\label{rees}
Adopt the data of {\rm \ref{d2}}.  
 If the Krull dimension of $A$ is equal to $s$, then 
\[e(\R(I)_{\m \R(I)})=[B: A] \qquad \mbox{and} \qquad j(I)=d\cdot [B: A] \cdot e(A)\, .\]
\end{theorem}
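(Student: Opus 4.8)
The plan is to treat the first equality $e(\R(I)_{\m\R(I)})=[B:A]$ as the main point and then to deduce the $j$-multiplicity formula by repeating the argument of Corollary~\ref{nov24} in the present generality. First I would record that $\dim A=s$ forces, via Remark~\ref{R1}, the prime $P:=\m\R(I)$ to have height one, so that $\R(I)_P$ is a one-dimensional local domain with residue field $\Quot(A)$ and a well-defined multiplicity. Since $P\R(I)_P=\m\R(I)_P$, the associated graded ring $\operatorname{gr}_{P\R(I)_P}(\R(I)_P)$ is a standard graded one-dimensional algebra over $\Quot(A)$, whose Hilbert function is eventually constant and equal to the multiplicity. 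Thus $e(\R(I)_P)=\lim_{m\to\infty}\rho_m$, where $\rho_m=\rank_A\bigl(\m^m\R(I)/\m^{m+1}\R(I)\bigr)$; here I use that $\m^m\R(I)/\m^{m+1}\R(I)$ is a finitely generated $A$-module (it is annihilated by $\m\R(I)$) and that localizing it at $P$ is the same as tensoring with $\Quot(A)$, so its length over $\R(I)_P$ equals its $A$-rank.

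The key computation identifies these ranks. Because $I$ is generated in degree $d$ and $R$ is standard graded, one checks that $\m^m\R(I)/\m^{m+1}\R(I)\cong\bigoplus_j (I^j)_{jd+m}$, concentrated in internal degree $jd+m$; in particular, for $m=dc$ one has $(I^j)_{(j+c)d}=R_{dc}I_d^{\,j}=B_cA_j$, so that the degree-$dc$ graded piece of the associated graded ring is the $A$-submodule $AB_c\subseteq B$ generated by $B_c=R_{dc}$. Hence $\rho_{dc}=\rank_A(AB_c)=\dim_{\Quot(A)}W_c$, where $W_c$ is the $\Quot(A)$-span of $B_c$ inside $\Quot(B)$. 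The subspaces $W_c$ increase with $c$ (multiplying by a nonzero element of $A_1=I_d\subseteq R_d=B_1$ gives $W_c\subseteq W_{c+1}$), and their union is a $\Quot(A)$-subalgebra of $\Quot(B)$ containing every $B_c$, hence equal to $B\otimes_A\Quot(A)=\Quot(B)$. As $\Quot(B)$ is finite over $\Quot(A)$ of degree $[B:A]$, this increasing chain of subspaces stabilizes, so $W_c=\Quot(B)$ and $\rho_{dc}=[B:A]$ for $c\gg0$. Since $\rho_m$ is eventually constant, its eventual value is $\rho_{dc}$ for $c\gg0$, and therefore $e(\R(I)_P)=[B:A]$.

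For the second equality I would argue exactly as in Corollary~\ref{nov24}, noting that Proposition~\ref{lemmaEU}, and hence Corollary~\ref{nov23}, were in fact established for an arbitrary standard graded domain $R$. Taking general $k$-linear combinations $f_1,\dots,f_s$ of a generating set of $I$ and setting $C=k[f_1,\dots,f_s]$, a Noether normalization of $A$, Corollary~\ref{nov23} gives $e\bigl(R/((f_1,\dots,f_{s-1}):_RI^\infty)\bigr)=[B:A]\,e(A)$; the general-element formula for the $j$-multiplicity \cite[2.5]{Xie} gives $j(I)=\lambda_R\bigl(R/((f_1,\dots,f_{s-1}):_RI^\infty,f_s)\bigr)$, and since $f_s$ is a degree-$d$ nonzerodivisor on the one-dimensional ring $R/((f_1,\dots,f_{s-1}):_RI^\infty)$, this length equals $d\cdot e\bigl(R/((f_1,\dots,f_{s-1}):_RI^\infty)\bigr)=d\,[B:A]\,e(A)$. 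Because these tools require an infinite field, I would first reduce to that case by the faithfully flat base change $k\to k(u)$ (or $k\to\overline{k}$), under which $A$, $B$, $\R(I)$, the rank $[B:A]$, and both multiplicities are unchanged.

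The step I expect to be the main obstacle is the identification $\lim_m\rho_m=[B:A]$. The multiplicity of $\R(I)_P$ is an intrinsically ``vertical'' (degree-$m$, associated-graded) limit, whereas $[B:A]$ is a ``horizontal'' (degree-$j$, generic-rank) quantity, and the content of the formula is precisely that these two directions agree. Making this interchange rigorous—equivalently, proving that the ranks $\rho_{dc}=\rank_A(AB_c)$ actually attain the full rank $[B:A]$ rather than merely being bounded by it—is where standard-gradedness of $R$ and the finiteness of $[\Quot(B):\Quot(A)]$ must be used with care; note that the tempting shortcut of identifying the normalization of $\R(I)_P$ with $\R(\m^d)_{\m\R(\m^d)}$ fails, since $\R(\m^d)$ need not be integral over $\R(I)$.
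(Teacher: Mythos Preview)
Your argument for the first equality is correct and close in spirit to the paper's: both recognize that $e(\R(I)_{\m\R(I)})$ is the multiplicity of the one-dimensional standard graded $\Quot(A)$-algebra $\R(I)\otimes_{A'}\Quot(A')$ (with $A'=k[I_dt]$), and both end up computing $[\Quot(B):\Quot(A)]$. The paper does this by comparing with the polynomial subring $K[g]$ for a nonzero $g\in I_d$ and reading off field degrees; you instead compute the Hilbert function of the associated graded ring directly, identifying the degree-$dc$ piece with the $A$-module $AB_c$ and showing $\rank_A(AB_c)\to[B:A]$. Your stabilization argument via the increasing chain $W_c\subseteq W_{c+1}\subseteq\Quot(B)$ is a nice explicit substitute for the paper's Veronese/field-degree bookkeeping.

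For the second equality you take a genuinely different route, and here there is a gap in justification. The paper does \emph{not} revisit Corollary~\ref{nov24}; instead it works intrinsically with the associated graded ring $\G=\operatorname{gr}_I(R)$, proving $j(I)=\lambda(\G_{\m\G})\cdot e(A)$ by associativity and then observing that $I\R_{\m\R}=g\R_{\m\R}$ is principal (since $gt\notin\m\R$ forces $I/g\subset\R_{\m\R}$), whence $\G_{\m\G}\cong\R_{\m\R}/(g)$ and $\lambda(\G_{\m\G})=d\cdot e(\R_{\m\R})$. This is self-contained and uses only the first equality.

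Your alternative---reproving Corollary~\ref{nov24} in the generality of Data~\ref{d2}---can be made to work, but your assertion that Proposition~\ref{lemmaEU} and Corollary~\ref{nov23} ``were in fact established for an arbitrary standard graded domain $R$'' is not accurate as written: those results are stated under Data~\ref{d1}, where $R=k[x_1,\dots,x_s]$ is a polynomial ring. The proofs do extend (the one place that looks suspicious---that the $R/\Q'$ are polynomial rings in one variable---still holds, since any one-dimensional standard graded domain over an algebraically closed field is $k[t]$), but you must say so and check it. You also need to verify that the general-element formula \cite[2.5]{Xie} for $j(I)$ applies when $R$ is merely a standard graded domain (it does, since such $R$ are equidimensional and universally catenary), and that $R/((f_1,\dots,f_{s-1}):_RI^\infty)$ is one-dimensional with $f_s$ a nonzerodivisor on it. None of this is hard, but none of it is written; the paper's $\G_{\m\G}$ argument sidesteps all of these verifications.
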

\begin{proof} Inside the Rees ring $\R:=\R(I)=R[It]\subset R[t]$ we consider the $k$-subalgebra $A'=k[I_dt]$, which is  isomorphic to $A$. Write $K$ for the quotient field of $A'$. We notice that $\R \otimes_{A'} K$ is a standard graded $K$-algebra with maximal homogeneous ideal $\m (\R \otimes_{A'} K)$ and $(\R \otimes_{A'} K)_{\m (\R \otimes_{A'} K)}$ is equal to $ \R_{\ \m \R}$. Therefore the local ring  $\R_{\ \m \R}$ and the standard graded $K$-algebra $\R \otimes_{A'} K$  have the same multiplicity and the same Krull dimension. 
This dimension is one by Remark ~\ref{R1}.

Let $g$ be a non-zero homogeneous element of  $I_d$.  Notice that $g$ is a homogeneous non-zerodivisor  of degree $d$ in the one-dimensional standard graded $K$-algebra $\R\otimes_{A'} K$. We obtain
\begin{eqnarray*} e(\R \otimes_{A'} K)
=e((\R \otimes_{A'} K)^{(d)})&=&\left[\fakeht (\R \otimes_{A'} K)^{(d)}: K[g]\fakeht\right]\\
&=&
{\textstyle\frac{1}{d}}\, \left[(\R \otimes_{A'} K):K[g]\vphantom{E^{E_E}_{E_E}}\right]\\
&=&
{\textstyle\frac{1}{d}}\,  \left[R[t]:A[t]\fakeht\right]\, .
\end{eqnarray*}
The last equality holds because \[{\rm Quot} (K[g])={\rm Quot}(k[I_dt,g])={\rm Quot}(k[I_d,t])={\rm Quot}(A[t])\] and  ${\rm Quot}(\R \otimes_{A'} K)={\rm Quot}(R[t])$. 
Now we conclude that 
\[e(\R_{\ \m \R})=e(\R \otimes_{A'} K)={\textstyle\frac{1}{d}}\, [R:A]=[B:A]\, .
\] 
This concludes the proof of the first equality. 

The second equality follows from \cite[6.1(a) and 3.4]{SUV}. Here we give a self-contained proof. We write $\G:={\rm gr}_I(R)=\R/I\R$ for the associated graded ring of $I$. Notice that $\G/\m\G\cong \R/\m \R \cong A$. In particular, $\m \G$ is a prime ideal in $\G$ and $\G_{\m \G}$ is Artinian, because $A$ is a domain and ${\rm dim} \, A=s= {\rm dim}\, \G$. 

The $j$-multiplicity of $I$ is 
\[j(I)=e(0:_{\G} \m^{\infty})\, .\]
Since ${\rm Supp}_{\G}(0:_{\G} \m^{\infty})=V(\m \G)$, the associativity formula for multiplicity \ref{assoc} gives
\[e(0:_{\G} \m^{\infty})=\lambda_{\G_{\m \G}}((0:_{\G} \m^{\infty})_{\m \G})\cdot e(\G/ \m \G)=\lambda(\G_{\m \G})\cdot e(A)\, .\]
We conclude that 
\[j(I)=\lambda(\G_{\m \G})\cdot e(A)\, .\]
(The same equality was proved in \cite[the proof of 3.1]{JMV}.)

If $g\not= 0$ is an element of $I_d$ as above, then $gt \in \R \setminus \m \R$, hence $\frac{I}{g}=\frac{It}{gt} \subset \R$. It follows that  $I \R_{\ \m\R} = g \R_{\ \m\R}$, which gives
\[\G_{\m\G}\cong \R_{\ \m\G}/(g)\, .\]
Now
\[\lambda(G_{\m\G})=\lambda(\R_{\ \m\G}/(g))=e((\R\otimes_{A'} K)/(g))=d \cdot e(\R\otimes_{A'} K)=d\cdot e(\R_{\ \m \R})= d \cdot [B:A]\, .
\]
We conclude that $j(I)=d\cdot [B: A] \cdot e(A)$. 
\end{proof}
 
\smallskip

In the next corollary we express the degree of certain dual varieties 
in terms of the  $j$-multiplicity of Jacobian ideals.

 \begin{corollary} Let $k$ be an algebraically closed field of characteristic zero, let $X=V(f)\subset \Bbb P_k^{n-1}$ be a reduced and irreducible hypersurface of degree $d>1, $ and assume that the dual variety $X' \subset \Bbb P_k^{{n-1}'}$ is non-deficient, i.e., is a hypersurface as well. Let $R$ denote the homogenous coordinate ring of $X\subset \Bbb P_k^{n-1}$, and $I=(\partial f/\partial x_1, \ldots, \partial f/\partial x_n)R$ its Jacobian dual. Then 
 \[{\rm deg} \, X'=j(I)/(d-1)\]
 \end{corollary}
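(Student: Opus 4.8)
The plan is to recognize the polar (Gauss) map attached to $f$ as a special case of the map $\Psi$ studied in this section, and then to read off the formula from Theorem~\ref{rees}. First I would note that $R=k[x_1,\dots,x_n]/(f)$ is a standard graded domain of dimension $s=n-1$, and that the partial derivatives $\partial f/\partial x_1,\dots,\partial f/\partial x_n$ are forms of degree $d-1$ in $R$ generating $I$. Thus the data of~\ref{d2} applies, with $d-1$ in the role of the common generator degree. The rational map $\Psi=[\partial f/\partial x_1:\dots:\partial f/\partial x_n]$ carrying $X$ into the dual space $\mathbb P_k^{{n-1}'}$ is precisely the Gauss map, and by definition its image is the dual variety $X'$. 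Hence $A=k[I_{d-1}]$ is the homogeneous coordinate ring of $X'$, so that $e(A)=\deg X'$.

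Next I would invoke the non-deficiency hypothesis. Because $X'$ is a hypersurface in $\mathbb P_k^{{n-1}'}$, it has dimension $n-2$, and therefore $\dim A=n-1=s$. This is exactly the hypothesis needed to apply Theorem~\ref{rees}; moreover it is indispensable, since if $X'$ were deficient then $I$ would fail to have maximal analytic spread and $j(I)$ would vanish, whereas $\deg X'\neq 0$. Applying Theorem~\ref{rees} with generator degree $d-1$ yields
\[
j(I)=(d-1)\cdot[B:A]\cdot e(A)=(d-1)\cdot[B:A]\cdot\deg X'.
\]

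It then remains to prove that $[B:A]=1$, that is, that $\Psi$ is birational onto its image. This is the only genuinely geometric input and the step where characteristic zero is essential. I would argue via the reflexivity (biduality) theorem: the conormal variety of $X$ coincides, after interchanging the two factors of $\mathbb P_k^{n-1}\times\mathbb P_k^{{n-1}'}$, with the conormal variety of $X'$. The projection of the conormal variety of $X$ onto $X$ is birational, since a general smooth point of the hypersurface $X$ lies on a unique tangent hyperplane; thus $\Psi$ is the composite of this birational projection with the projection onto $X'$. Under biduality that second projection is identified with the projection of the conormal variety of $X'$ onto $X'$, which is again birational precisely because $X'$ is a hypersurface. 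Hence $\deg\Psi=[B:A]=1$, the displayed equation collapses to $j(I)=(d-1)\deg X'$, and the asserted formula $\deg X'=j(I)/(d-1)$ follows.

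The hard part is this last step: the algebraic content is an immediate translation into Theorem~\ref{rees}, but the equality $[B:A]=1$ rests on reflexivity and genuinely fails in positive characteristic, where the Gauss map can be inseparable of higher degree. A minor point to verify along the way is that the partial derivatives indeed generate a nonzero ideal defining $\Psi$ as required by~\ref{d2}; this holds because $f$ is reduced and irreducible of degree $d>1$ in characteristic zero (by Euler's relation the partials cannot all vanish on $X$).
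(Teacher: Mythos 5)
Your proposal is correct and follows essentially the same route as the paper: identify $A$ as the homogeneous coordinate ring of $X'$, establish that the Gauss map is birational onto its image (the paper simply cites Kleiman's \emph{Tangency and duality} or Piene for this, where you sketch the reflexivity argument via the conormal variety), and then apply Theorem~\ref{rees} with generator degree $d-1$. Your additional remarks on why $\dim A = s$ and why the partials do not all vanish on $X$ are correct fillings-in of details the paper leaves implicit.
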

\begin{proof} We apply the notation of Data \ref{d2} to the present setting. The ideal $I$ is generated by forms of degree $d-1$, and the algebra $A$ is the homogenous coordinate ring of $X'  \subset \Bbb P_k^{{n-1}'}$. According to \cite[Theorem 4]{K} or \cite[Proposition 3.3]{P}, the extension $A\subset B$ is birational, i.e., $[B:A]=1$. Now Theorem \ref{rees} shows that  $j(I)=(d-1){\rm deg} \ X'$. 
 \end{proof}

\smallskip

\begin{corollary}\label{C0} Adopt the data of {\rm \ref{d2}}.   
Let $J$ be a reduction of $I$ generated by $s$ forms of degree $d$. If the Krull dimension of $A$ is equal to $s$, then 
\[e(A)=\frac{e(\R(J)_{\m \R(J)})}{e(\R(I)_{\m \R(I)})}\  .
\]
\end{corollary}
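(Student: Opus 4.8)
The plan is to apply Theorem~\ref{rees} twice---once to $I$ and once to its reduction $J$---and then exploit the fact that $I$ and $J$ generate the same subalgebra of Veronese type, so that the factor $d\cdot e(A)$ cancels. First I would record the standing hypotheses: since $J\subset I$ is a reduction, the two ideals have the same integral closure, hence the same special fiber, and in particular $k[J_d]$ and $k[I_d]$ have the same quotient field inside $\operatorname{Quot}(B)$, giving $[B:k[J_d]]=[B:A]$ with $A=k[I_d]$. I also need to check that the hypotheses of Theorem~\ref{rees} hold for $J$: because $J$ is a reduction of $I$, one has $\dim k[J_d]=\dim k[I_d]=s$ (reductions preserve analytic spread), so the theorem applies to $J$ as well.

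Next I would invoke Theorem~\ref{rees} for both ideals. Applied to $I$ it gives $j(I)=d\cdot[B:A]\cdot e(A)$; applied to $J$ it gives $j(J)=d\cdot[B:k[J_d]]\cdot e(k[J_d])$ together with the multiplicity formulas $e(\R(I)_{\m\R(I)})=[B:A]$ and $e(\R(J)_{\m\R(J)})=[B:k[J_d]]$. Since $J$ is generated by exactly $s$ forms of degree $d$ and is a reduction of $I$, the subalgebra $k[J_d]$ is a Noether normalization of $A$; equivalently, $J$ is a minimal reduction and $k[J_d]$ is a polynomial ring, so $e(k[J_d])=1$. This is the key numerical input that distinguishes $J$ from $I$. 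Combining this with the first multiplicity formula of Theorem~\ref{rees} then yields
\[
\frac{e(\R(J)_{\m\R(J)})}{e(\R(I)_{\m\R(I)})}=\frac{[B:k[J_d]]}{[B:A]}=[A:k[J_d]]=e(A),
\]
where the last equality is Observation~\ref{elem} (or the transitivity of field degrees in the tower $k[J_d]\subset A\subset B$, since $[B:k[J_d]]=[B:A]\cdot[A:k[J_d]]$ and $[A:k[J_d]]=e(A)$ because $A$ is module-finite over the polynomial ring $k[J_d]$).

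The step I expect to be the main obstacle---or at least the one requiring the most care---is verifying that $k[J_d]$ is genuinely a polynomial ring of dimension $s$ so that $e(k[J_d])=1$ and $[A:k[J_d]]=e(A)$. This rests on $J$ being a \emph{minimal} reduction (generated by exactly $s=\dim A$ forms) and on the general-position argument underlying Corollary~\ref{nov23}, where $s$ general linear combinations of generators cut out a Noether normalization. One must ensure the $s$ given generators of $J$ are algebraically independent after passing to degree $d$; this is automatic when $k$ is infinite and the forms are chosen to be a minimal reduction, but it is the place where the hypothesis ``$J$ generated by $s$ forms'' is actually used. Everything else is bookkeeping with the multiplicativity of field degrees and the two formulas already proved in Theorem~\ref{rees}.
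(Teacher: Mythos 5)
Your core computation is the paper's proof: apply Theorem~\ref{rees} to both $I$ and $J$ to get $e(\R(I)_{\m \R(I)})=[B:A]$ and $e(\R(J)_{\m \R(J)})=[B:k[J_d]]$, observe that $k[J_d]$ is a homogeneous Noether normalization of $A$ so that $[A:k[J_d]]=e(A)$, and conclude by multiplicativity of field degrees in the tower $k[J_d]\subset A\subset B$. The $j$-multiplicity formulas play no role and can be dropped.

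However, your opening paragraph contains a false assertion that must be deleted: a reduction $J$ of $I$ does \emph{not} have the same special fiber as $I$, and $k[J_d]$ and $k[I_d]$ do \emph{not} have the same quotient field inside $\Quot(B)$ in general. Rather, $k[J_d]$ is a polynomial ring in $s$ variables over which $A=k[I_d]$ is module-finite of rank $e(A)$, so the two quotient fields coincide precisely when $e(A)=1$. If your claimed equality $[B:k[J_d]]=[B:A]$ were correct, your final display would force $e(A)=[B:k[J_d]]/[B:A]=1$, which is absurd (take $I=\m^d$ with $s\ge 2$, where $A=B$ and $e(A)=d^{s-1}$, while $[B:k[J_d]]=d^{s-1}$). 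Fortunately this claim is never used: your concluding chain of equalities contradicts it and is the correct argument. As written, though, the proof asserts two incompatible values for $[B:k[J_d]]$, so strike the sentence about reductions having the same special fiber and keep only the Noether normalization argument.
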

\begin{Remark}If $k$ is infinite, then $I$ has a reduction generated by $s$ forms of degree $d$.
\end{Remark}
\begin{proof} Notice that $k[J_d]$ is a  homogeneous Noether normalization of $A=k[I_d]$. Therefore 
$e(A)=[k[I_d]:k[J_d]]$. On the other hand, \[[k[I_d]:k[J_d]]=\frac{[B:k[J_d]]}{[B:k[I_d]]}=\frac{e(\R(J)_{\m \R(J)})}{e(\R(I)_{\m \R(I)})}\, , \]
where the last equality follows from  Theorem~\ref{rees} applied to the two ideals $J$ and $I$, respectively. 
\end{proof}

As a consequence we obtain the following numerical criterion for birationality in terms of the Rees ring or the special fiber ring, respectively. 

\begin{corollary}\label{C1}Adopt the data of {\rm \ref{d2}}.    The following statements are equivalent.
\begin{enumerate}[\quad\rm(1)]
\item The ring extension $A \subset B$ is birational, i.e. $[B:A]=1$. 
\item The ring $\R(I)_{\m \R(I)}$ is a discrete valuation ring.
\item The ring $A$ has dimension $s$ and $e(A)=e(\R(J)_{\m \R(J)})$, where $J$  is  a reduction of $I$ generated by $s$ forms of degree $d$.\end{enumerate}
\end{corollary}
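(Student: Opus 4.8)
The plan is to prove the equivalence $(1)\Leftrightarrow(2)\Leftrightarrow(3)$ by tying everything back to Theorem~\ref{rees}, which computes $e(\R(I)_{\m\R(I)})=[B:A]$ whenever $\dim A=s$, together with Remark~\ref{R1}, which shows that the localization $\R(I)_{\m\R(I)}$ is a one-dimensional local domain precisely when $\dim A=s$. The key observation is that birationality $[B:A]=1$ should correspond exactly to $e(\R(I)_{\m\R(I)})=1$, and for a one-dimensional Noetherian local domain, multiplicity one is equivalent to regularity (that is, being a DVR) by Nagata's theorem recorded in \ref{nagata}.

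First I would establish $(1)\Leftrightarrow(2)$. The subtlety here is that Theorem~\ref{rees} requires the hypothesis $\dim A=s$, whereas statements (1) and (2) do not presuppose it, so I must first argue that each of (1) and (2) forces $\dim A=s$. For the forward direction, if $A\subset B$ is birational then $\Quot(A)=\Quot(B)$, so $A$ and $B$ have the same dimension; since $B=k[R_d]$ is a Veronese subring of the $s$-dimensional domain $R$, one has $\dim B=s$, whence $\dim A=s$. Then Theorem~\ref{rees} gives $e(\R(I)_{\m\R(I)})=[B:A]=1$, and by Remark~\ref{R1} the local ring $\R(I)_{\m\R(I)}$ is a one-dimensional Noetherian local domain; Nagata's criterion \ref{nagata} then forces it to be regular, i.e.\ a DVR. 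Conversely, if $\R(I)_{\m\R(I)}$ is a DVR, then in particular it has Krull dimension one, so Remark~\ref{R1} gives $s+1-\dim\R(I)_{\m\R(I)}=\dim A$, i.e.\ $\dim A=s$. Now Theorem~\ref{rees} applies and yields $[B:A]=e(\R(I)_{\m\R(I)})=1$, since a DVR has multiplicity one.

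Next I would prove $(1)\Leftrightarrow(3)$. Assuming the existence of a reduction $J$ of $I$ generated by $s$ forms of degree $d$ (guaranteed by the Remark when $k$ is infinite, and this is the hypothesis we invoke), I would use Corollary~\ref{C0}. For $(3)\Rightarrow(1)$, statement (3) already supplies $\dim A=s$, so Corollary~\ref{C0} gives $e(A)=e(\R(J)_{\m\R(J)})/e(\R(I)_{\m\R(I)})$; combined with the assumed equality $e(A)=e(\R(J)_{\m\R(J)})$ this forces $e(\R(I)_{\m\R(I)})=1$, hence $[B:A]=1$ by Theorem~\ref{rees}, which is (1). For $(1)\Rightarrow(3)$, birationality gives $\dim A=s$ as above, so Corollary~\ref{C0} applies; moreover $(1)$ is equivalent to $(2)$, so $\R(I)_{\m\R(I)}$ is a DVR with $e(\R(I)_{\m\R(I)})=1$, and the displayed formula collapses to $e(A)=e(\R(J)_{\m\R(J)})$, which is (3).

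The main obstacle I anticipate is purely bookkeeping rather than conceptual: I must be careful that the dimension hypothesis $\dim A=s$ needed to invoke Theorem~\ref{rees} and Corollary~\ref{C0} is genuinely available in each implication, since these results are stated conditionally on that hypothesis while the three equivalent conditions are not. The cleanest route is to show at the outset that any one of (1), (2), (3) already implies $\dim A=s$ (for (3) it is explicit, for (1) it follows from equality of quotient fields, and for (2) it follows from Remark~\ref{R1}), after which all the multiplicity machinery becomes applicable and the equivalences reduce to the single clean fact that a one-dimensional Noetherian local domain is a DVR if and only if its multiplicity equals one.
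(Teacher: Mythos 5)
Your proposal is correct and follows essentially the same route as the paper: the paper's proof likewise first notes that birationality forces $\dim A=s$, invokes Remark~\ref{R1} to identify when $\R(I)_{\m\R(I)}$ is one-dimensional, and then derives the equivalences from Theorem~\ref{rees} and Corollary~\ref{C0} (with Nagata's multiplicity-one criterion \ref{nagata} supplying the ``multiplicity one iff DVR'' step, exactly as you use it). Your more explicit bookkeeping of where the hypothesis $\dim A=s$ comes from in each implication is just an expansion of what the paper leaves implicit.
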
 
\begin{proof} Notice that if 
the ring extension $A \subset B$ is birational, then $A$ and $B$ have the same transcendence degree over $k$ and therefore ${\rm dim} \, A={\rm dim}\, B=s$. Furthermore ${\rm dim}\, \R(I)_{\m\R(I)} = 1$ if and only if ${\rm dim}\, A=s$ according to Remark \ref{R1}. Now the asserted equivalences follow from Theorem~\ref{rees} and Corollary~\ref{C0}.
\end{proof}

More can be said if  the rational map $\Psi$ of  data of {\rm \ref{d1}}  is a morphism, or equivalently, if  the ideal $I$ is $\m$-primary. A crucial statement in the following corollary is the statement about canonical modules. Indeed, cores, adjoints, and  $S_2$-fications are all constructed using the canonical module  of the Rees ring.  

\begin{remark}\label{S_2} If $I$ is an $\m$-primary ideal generated by forms of degree $d$, then the ideal $\m^d$ is integral over $I$. Hence the ring extension \[\R(I) \subset \R(\m^d)\] is not only birational, but also 
module-finite. Thus applying the functor $-^{\vee}={\rm Hom}_{\R(I)}(-, \omega_{\R(I)})$ induces a containment of the canonical modules
\[ \omega_{\R(\m^d)}\subset  \omega_{\R(I)}\, , \]
and evaluating $-^{\vee}$ twice yields an inclusion of the $S_2$-ifications
\[\End_{\R(I)}(\omega_{\R(I)})\subset  \End_{\R(\m^d)}(\omega_{\R(\m^d)})\, .
\]
Applying $-^{\vee}$ once more to last inclusion returns the previous one. In particular, one of the two last inclusions is an equality if and only if  the other is.  
\end{remark}

\begin{corollary}\label{C2}Adopt the data of {\rm \ref{d1}}.  In addition assume that $I$ is $\m$-primary and the  field $k$ is infinite. 
Statements {\rm(1)\,--\,(5)} are equivalent.
\begin{enumerate}
\item[{\rm (1)}]  The morphism  $\, \Psi$ is birational onto its image, i.e. $[B:A]=1$. 
\item[{\rm (2)}]  The Rees ring $\R(I)$ satisfies Serre's condition $(R_1)$. 
\item[{\rm (3)}]  $\omega_{\R(I)}= \omega_{\R(\m^d)}$.
\item[{\rm (4)}]   $\End_{\R(I)}(\omega_{\R(I)})=  \End_{\R(\m^d)}(\omega_{\R(\m^d)})$.
\item[{\rm (5)}]   $e(A)=d^{s-1}$.\end{enumerate}
Furthermore,  statements {\rm(1)\,--\,(5)}  all  imply
\begin{enumerate}
\item[{\rm (6)}]  
${\rm gradedcore}(I)={\rm core}(I)={\rm core}(\m^d)=\m^{(d-1)s+1}={\rm
 adj}(I^{s})$.
 \item[{\rm (7)}]  If $\R(I)$ satisfies Serre's property $(S_2)$, then $\R(I)$ is Cohen-Macaulay.
\end{enumerate}
\end{corollary}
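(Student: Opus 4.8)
The plan is to prove the equivalence of (1)--(5) first, then derive (6) and (7) from them. For the equivalences, I would set up a cycle. The implication $(1)\Leftrightarrow(2)$ is essentially Corollary~\ref{C1}: birationality of $A\subset B$ is equivalent to $\R(I)_{\m\R(I)}$ being a DVR, and since $\R(I)$ is already known to satisfy $(R_0)$ away from $\m\R(I)$ (the Rees ring of an $\m$-primary ideal in a regular ring is nice in codimension one off the fiber), the condition $(R_1)$ for $\R(I)$ should reduce precisely to the single localization at $\m\R(I)$ being regular, i.e. a DVR. The implication $(1)\Leftrightarrow(5)$ follows immediately from Observation~\ref{der}, which gives $d^{s-1}=e(A)[B:A]$, so $[B:A]=1$ exactly when $e(A)=d^{s-1}$. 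The heart of the equivalences is linking $(1)$ to the canonical-module statements $(3)$ and $(4)$.

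For $(3)\Leftrightarrow(4)$, I would invoke Remark~\ref{S_2} directly: it states that the inclusion $\omega_{\R(\m^d)}\subset\omega_{\R(I)}$ is an equality if and only if the inclusion of $S_2$-ifications $\End_{\R(I)}(\omega_{\R(I)})\subset\End_{\R(\m^d)}(\omega_{\R(\m^d)})$ is an equality, because applying $-^\vee$ interchanges the two. So $(3)$ and $(4)$ are formally equivalent. To connect these to birationality, I would argue that $\R(I)\subset\R(\m^d)$ is a module-finite birational extension (Remark~\ref{S_2}), and the $S_2$-ification of $\R(I)$ lives inside $\Quot(\R(I))=\Quot(\R(\m^d))$. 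Since $\R(\m^d)$ is the Rees ring of a power of the maximal ideal of a polynomial ring, it is normal (indeed Cohen-Macaulay with nice properties), hence equal to its own $S_2$-ification, so $\End_{\R(\m^d)}(\omega_{\R(\m^d)})=\R(\m^d)$. Thus $(4)$ says $\End_{\R(I)}(\omega_{\R(I)})=\R(\m^d)$, i.e. the $S_2$-ification of $\R(I)$ is all of $\R(\m^d)$. The remaining task is to show this equals the birationality/$R_1$ condition; here I expect to use that $\R(I)$ satisfies $(R_1)$ iff it is already $S_2$-ified at codimension-one primes in the right way, and compare multiplicities via Theorem~\ref{rees} ($e(\R(I)_{\m\R(I)})=[B:A]$) to force the extension to be trivial exactly when $[B:A]=1$.

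For the implications into (6), assuming birationality, I would use the earlier results on cores and adjoints. The statement ${\rm core}(\m^d)=\m^{(d-1)s+1}$ is a known formula for powers of the maximal ideal (this is the graded core computation for $\m^d$ in an $s$-dimensional polynomial ring), and the adjoint/multiplier ideal ${\rm adj}(I^s)$ equals $\m^{(d-1)s+1}$ by the birational identification of $\R(I)$ with $\R(\m^d)$, which makes the adjoint filtration of $I$ agree with that of $\m^d$. The chain ${\rm gradedcore}(I)={\rm core}(I)={\rm core}(\m^d)$ should follow because under $(R_1)$ the core is integrally closed (by \cite{PU}, as cited in the introduction) and the canonical-module equality $(3)$ forces the core of $I$ to coincide with the core of $\m^d$; I would extract the core as a specific graded component of $\omega_{\R(I)}$ and use $(3)$ to identify it with the corresponding component for $\m^d$. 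Finally, (7) is a standard fact: if $\R(I)$ satisfies $(S_2)$ and also $(R_1)$ (which holds by $(2)$), then by the argument that it is normal, or more directly because a Rees ring that is $S_2$ and has the right depth is Cohen-Macaulay under these circumstances, one concludes Cohen-Macaulayness; I would cite or reprove that under birationality the associated graded ring is nice enough to lift $(S_2)$ to Cohen-Macaulay. The main obstacle I anticipate is the precise bookkeeping in the canonical-module argument linking $(3)$/$(4)$ to $(1)$, specifically verifying that the $S_2$-ification of $\R(I)$ equals $\R(\m^d)$ exactly in the birational case and translating the core as a graded piece of $\omega_{\R(I)}$ correctly.
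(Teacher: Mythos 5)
Your outline of the equivalences (1)--(5) matches the paper's proof in all essentials: the reduction of $(R_1)$ to regularity at the single height-one prime $\m\R(I)$ followed by Corollary~\ref{C1}, the appeal to Remark~\ref{S_2} for $(3)\Leftrightarrow(4)$, and the identification of $\omega_{\R(I)}=\omega_{\R(\m^d)}$ with the statement that $\R(\m^d)/\R(I)$ has codimension at least two (the paper runs this through the long exact sequence of $\operatorname{Ext}$ against $0\to\R(I)\to\R(\m^d)\to\R(\m^d)/\R(I)\to 0$, using that $\R(\m^d)$ is Cohen--Macaulay; your route through the $S_2$-ification being all of $\R(\m^d)$ is an equivalent packaging, since either condition amounts to the conductor having height at least two). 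Your derivation of $(1)\Leftrightarrow(5)$ directly from Observation~\ref{der} is in fact cleaner than the paper's, which computes $e(\R(J)_{\m\R(J)})=d^{s-1}$ for a complete-intersection reduction $J$ and invokes Corollary~\ref{C1}(3). Your treatment of (7) --- normality forces $\R(I)=\R(\m^d)$, which is Cohen--Macaulay as a direct summand of $\R(\m)$ --- is the paper's argument.

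The one genuine gap is in (6), at the step $\core(I)=\core(\m^d)$. You propose to read the core off a graded component of $\omega_{\R(I)}$ and then apply (3); but the core formulas of \cite{PU} and \cite{FPU} recover $\core(I)$ from the canonical module of the \emph{extended} Rees algebra $R[It,t^{-1}]$, not of $R[It]$. One must therefore first transfer the $(R_1)$ property (equivalently, the equality of canonical modules) from $R[It]$ to $R[It,t^{-1}]$; the paper does this by noting that the two rings have isomorphic projective spectra and that every remaining prime, i.e.\ every prime containing $t^{-1}$, contains the ideal $(It,t^{-1})$ of height $s+1>1$, and then rerunning the argument that $(2)\Rightarrow(3)$ for the extended Rees algebras. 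Two smaller inaccuracies: the equality $\operatorname{gradedcore}(I)=\core(I)$ is not a consequence of the core being integrally closed --- it is the separate result \cite[4.5]{CPU1}, available because $I$ is generated by forms of a single degree; and the equalities $\operatorname{adj}(I^{s})=\operatorname{adj}(\m^{ds})=\m^{(d-1)s+1}$ require no birationality --- they hold because $\m^{ds}$ is integral over $I^{s}$ and adjoints depend only on integral closures, combined with Lipman's computation of $\operatorname{adj}(\m^{ds})$.
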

\begin{proof}
Since $I$ is $\m$-primary the ring $A$ has Krull dimension $s$. In particular, $\m \R(I)$ is a prime ideal of  height one. If $Q$ is any other  height one prime ideal of $\R(I)$, then $Q$ does not contain $\m$ and hence does not contain $I$. It follows that $\R(I)_Q$ is regular. Thus $\R(I)$ satisfies Serre's condition $(R_1)$ if and only if $\R(I)_{\m \R(I)}$ is a discrete valuation ring. Now the equivalence of (1) and (2) follows from Corollary~\ref{C1}. 

To see the equivalence of (2) and (3) notice that $\R(\m^d)$ is the integral closure of $\R(I)$. Hence $\R(I)$ satisfies Serre's condition $(R_1)$ if and only if the $\R(I)$-module $\R(\m^d)/\R(I)$ has codimension at least two, which in turn is equivalent to the equality $\omega_{\R(I)}= \omega_{\R(\m^d)}$. The second equivalence is obtained by analyzing the long exact sequence of ${\rm Ext}^{\bullet}_{R\otimes_k S}(-, \omega_{R\otimes_k S})$ associated to the short exact sequence
\[ 0 \longrightarrow \R(I) \longrightarrow \R(\m^d) \longrightarrow \R(\m^d)/\R(I)\longrightarrow 0
\]
and bearing in mind that $\R(\m^d)$ is Cohen-Macaulay. 
The equivalence of (3) and (4) is explained in Remark~\ref{S_2}.

To see that (5) is equivalent to the other statements we apply Corollary~\ref{C1}. We may assume that $k$ is infinite. It suffices to show the equality  $e(\R(J)_{\m \R(J)})=d^{s-1}$ for the ideal $J$ of Corollary~\ref{C1}. Since $I$ is $\m$-primary, the ideal $J$ is generated by a  regular sequence of $s$ forms of degree $d$. Therefore, $[R: k[J_d]]=d^s$ and then $e(\R(J)_{\m \R(J)})= [B:k[J_d]]=d^{s-1}$ according to Theorem~\ref{rees} .

We  now show that (2) implies (6). We will repeatedly use the fact from \cite[2.1]{PUV} that the core of $\m$-primary ideals localizes. The first equality follows from  \cite[4.5]{CPU1} as $I$ is generated by forms of the same degree.  Recall that the Rees ring $R[It]$ and the extended Rees ring $R[It,t^{-1}]$ have isomorphic projective spectra. Hence if the Rees ring satisfies $R_1$, then  so does $\Proj(R[It,t^{-1}])$. It follows that $R[It,t^{-1}]$ satisfies $R_1$ as well because it suffices to consider prime ideals containing $t^{-1}$ and because the ideal $( It, t^{-1})R[It,t^{-1}]$ has height $s+1>1$. One uses the argument that (2) implies (3) to see that the $R_1$ property of the extended Rees ring implies the bi-graded isomorphism \[\omega_{R[It, t^{-1}] }\cong \omega_{R[\m^dt, t^{-1}]}\, .\]  
By \cite[2.2]{PU} and \cite[1.2]{FPU} the core can be recovered from the canonical module of the extended Rees ring. Therefore $\core(I)=\core(\m^d)$, which is the second equality in (6). The third equality follows from \cite[4.2]{CPU2}. Finally, one has $\m^{(d-1)s+1}={\rm adj} (\m^{ds})$ by \cite[1.3.2(c)]{L} and ${\rm adj}(\m^{ds})={\rm adj}\, (I^{s})$ because $\m^{ds}$ is integral over $I^s$. 

To see (7), notice that $\R(I)$ is normal, hence $\R(I)=\R(\m^d)$. The last ring is Cohen-Macaulay because it is a direct summand of 
$\R(\m)$, which is a Cohen-Macaulay ring and a finitely generated module over $\R(\m^d)$.
%
\end{proof}

The canonical module  and the $S_2$-ification of $\R(m^d)$ that appear in Corollary~\ref{C2} are known explicitly.  


\begin{remark}\label{canonical} Let  $R=k[x_1,\dots,x_s]$ be a standard graded polynomial ring  in $s$ variables over a field $k$, with maximal homogeneous ideal $\m$. Let $d$ be a positive integer and write $s-1=qd+r$ where $q$ and $r$ are non negative integers with $r<d$. The bi-graded canonical module of the Rees ring of  $\m^d$ is \[\omega_{\R(\m^d)}=\begin{cases}x_1^s t \, \m^{d-s+1}\R(\m^d) & \qquad  \mbox{if} \qquad s \le d\\
x_1^st((1,t)^{q-1}, \m^{d-r}t^{q})\R(\m^d) & \qquad  \mbox{if} \qquad d<s\, .\\
\end{cases}\]
Furthermore, the natural inclusion
$\R(m^d) \subset \End_{\R(m^d)}(\omega_{\R(m^d)})$
is an equality.
\end{remark}
\begin{proof} From the presentation of the Rees ring \[\R(\m)= \frac{R[T_1, \ldots, T_s]}{I_2\left(\bmatrix x_1 & \ldots & x_s \\ T_1 & \ldots & T_s\endbmatrix\right)}\] one computes  \[\omega_{\R(\m)}\cong x_1^2t (x_1,x_1t)^{s-2} \R(\m)\, ,\]
where the factor $x_1^2t$ is required to make the isomorphism bi-homogeneous. The Rees ring $\R(\m^d)$ is obtained by taking the $d$-Veronese sub-ring of $\R(\m)$ with respect to the grading given by the $T$-variables and then rescaling the grading. Since the Veronese functor commutes with taking canonical modules, we obtain the desired statement.

The natural inclusion
$\R(m^d) \subset \End_{\R(m^d)}(\omega_{\R(m^d)})$
is an equality because $\R(m^d)$ is Cohen-Macaulay and $\End_{\R(m^d)}(\omega_{\R(m^d)})$ is its $S_2$-ification. 
\end{proof}

In Corollaries~\ref{R-sub-i} and \ref{iso-singu} we extend the equivalence $(1)\iff (2)$ from Corollary~\ref{C2}. We begin by comparing the heights of conductors  for certain   extensions of Rees rings and special fiber rings.

\begin{proposition}\label{conductors}Adopt the data of {\rm\ref{d2}}. In addition, assume that $I$ is $\mathfrak m$-primary. Let $K$ be an ideal of $R$ which is generated by forms of degree $d$ and which contains $I$. Write $C$ for $k[K_d]$; so, $A=k[I_d]\subset C=k[K_d]$. Then 
$$\operatorname{ht} (\mathcal R(I):_{\mathcal R(I)}\mathcal R(K))= 1 + \operatorname{ht}(A:_AC)=1+\operatorname{ht}(\mathcal F(I):_{\mathcal F(I)}\mathcal F(K)).$$
\end{proposition}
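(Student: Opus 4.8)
The plan is to establish the two equalities separately, with $1+\operatorname{ht}(A:_AC)=\operatorname{ht}(\R(I):_{\R(I)}\R(K))$ as the substantial one. First I would record the finiteness facts that drive everything. Since $I$ is $\m$-primary and $A\subset C\subset B$, both inclusions are module-finite; and since $K$ is generated by forms of degree $d$ we have $K\subseteq \m^d\subseteq\overline I$ by Remark~\ref{S_2}, so $K$ is integral over $I$ and $\R(I)\subset \R(K)$ is a module-finite birational extension of domains. Consequently the conductors are annihilators: $\R(I):_{\R(I)}\R(K)=\operatorname{ann}_{\R(I)}M$ and $A:_AC=\operatorname{ann}_A(C/A)$, where $M=\R(K)/\R(I)$ is a \emph{finitely generated} $\R(I)$-module.

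The second equality is essentially formal. By Remark~\ref{R1} (see also \ref{typical}) the inclusion $\R(I)\hookrightarrow\R(K)$ induces, modulo $\m$, precisely the inclusion $\mathcal F(I)\cong A\hookrightarrow C\cong\mathcal F(K)$; thus the pair $(\mathcal F(I),\mathcal F(K))$ is isomorphic to $(A,C)$, the conductors $\mathcal F(I):_{\mathcal F(I)}\mathcal F(K)$ and $A:_AC$ correspond under this isomorphism, and they share the same height.

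For the first equality the guiding idea is that the conductor locus $V(\operatorname{ann}_{\R(I)}M)$ sits entirely over the height-one prime $\m\R(I)$ and, restricted there, is governed by $C/A$. I would proceed in three steps. \emph{(i) The support lies in $V(\m\R(I))$.} For each $x_i\in\m$ the module $(K/I)_{x_i}$ vanishes because $K/I$ is a submodule of the finite-length module $R/I$; hence $I_{x_i}=K_{x_i}$, so $\R(I)_{x_i}=\R(K)_{x_i}$ and $M_{x_i}=0$. Thus no prime missing $\m$ lies in $\operatorname{Supp}_{\R(I)}M$, giving $\operatorname{Supp}_{\R(I)}M\subseteq V(\m\R(I))$; as $M$ is finitely generated, $\m^N M=0$ for some $N$. \emph{(ii) Comparison with the fiber.} A direct computation identifies $M/\m M$ with $C/A$ as modules over $\R(I)/\m\R(I)=A$. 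For a prime $Q\supseteq\m\R(I)$ one has $(M/\m M)_Q\neq0\Rightarrow M_Q\neq0$ trivially, while the reverse holds because $M_Q=\m M_Q$ would force $M_Q=\m^N M_Q=0$. Hence, inside $V(\m\R(I))$, the locus $V(\operatorname{ann}_{\R(I)}M)$ equals the image of $V(A:_AC)$ under the closed immersion $\operatorname{Spec}A=V(\m\R(I))\hookrightarrow\operatorname{Spec}\R(I)$. \emph{(iii) Heights add.} Both $\R(I)$ and $A$ are finitely generated domains over $k$, hence equidimensional and catenary, and for $Q\supseteq\m\R(I)$ one has $\R(I)/Q\cong A/\overline Q$; comparing the dimension formulas $\operatorname{ht}Q=\dim\R(I)-\dim\R(I)/Q$ and $\operatorname{ht}\overline Q=\dim A-\dim A/\overline Q$, with $\dim\R(I)=s+1$ and $\dim A=s$ by Remark~\ref{R1}, yields $\operatorname{ht}_{\R(I)}Q=1+\operatorname{ht}_A\overline Q$. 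Minimizing over the primes of the conductor locus gives the claim.

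The main obstacle I expect is step (ii): one must be certain that passing to the special fiber neither shrinks nor enlarges the conductor locus. The $\m$-torsionness of $M$ from step (i), combined with Nakayama over each local ring $\R(I)_Q$, is exactly what supplies this, but it is worth verifying carefully that $M$ is finitely generated — which is precisely where the integrality $K\subseteq\overline I$ from Remark~\ref{S_2} enters — since without finite generation neither the annihilator description of the conductor nor the uniform bound $\m^N M=0$ would be available.
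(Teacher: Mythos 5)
Your proof is correct, and it establishes the same pivotal fact as the paper's proof --- that up to radical the Rees conductor equals $(\m,\, A'\colon_{A'}C')\R(I)$, where $A'=k[I_dt]$ and $C'=k[K_dt]$ --- before finishing with the identical endgame: $\m\R(I)$ is a height-one prime, $\R(I)$ and $A$ are affine domains over $k$ (hence catenary and satisfying $\operatorname{ht}Q+\dim \R(I)/Q=\dim\R(I)$), so heights increase by exactly one under the quotient $\R(I)\to\R(I)/\m\R(I)\cong A$. Where you genuinely diverge is in how that pivotal fact is proved. The paper argues directly on bihomogeneous elements of the conductor: an element $\alpha=ft^j$ of bidegree $(i,j)$ with $i>0$ lies in $\m\R(I)$, while one with $i=0$ lies in $A'$ and a degree count forces $\alpha C'\subseteq A'$; the reverse containment comes from the reduction property $K^tK^n\subseteq I^n$, which places $K^t$ in the conductor. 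You instead work with the finitely generated module $M=\R(K)/\R(I)$: localizing at the variables shows $\operatorname{Supp}M\subseteq V(\m\R(I))$, and the computation $M/\m M\cong C/A$ combined with the resulting $\m$-torsion of $M$ identifies $\operatorname{Supp}M$ inside $V(\m\R(I))$ with $V(A\colon_AC)$; finite generation of $M$ --- which you correctly trace to $K\subseteq\m^d\subseteq\overline{I}$ --- is what lets you pass between support and annihilator. Your module-theoretic route avoids the explicit bigraded bookkeeping and makes the role of integrality transparent, at the cost of not exhibiting generators of the radical of the conductor as the paper's element chase does. Both arguments are complete.
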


\begin{proof} Let $A'$  be the sub-ring $k[I_dt]$  of $\mathcal R(I)=R[I_dt]$ and $C'$ be the sub-ring $k[K_dt]$  of $\mathcal R(K)=R[K_dt]$. 
%
%
We first prove that the ideals 
\begin{equation}\label{eq-rad}\operatorname{rad} (\mathcal R(I):_{\mathcal R(I)}\mathcal R(K)) \quad\text{and}\quad 
\operatorname{rad} ((\mathfrak m, A':_{A'}C')\mathcal R(I))\end{equation}
of $\mathcal R(I)$ are equal. 

We prove the inclusion ``$\supset$'' for the ideals of (\ref{eq-rad}). The Rees algebra $\mathcal R(K)$ is a 
module-finite extension of $\mathcal R(I)$ because $I$ is $\mathfrak m$-primary; hence, $I$ is a reduction of $K$ and  there is a positive  integer $t$ with $K^tK^n\subset I^n$ for all non-negative integers $n$. It follows that 
$K^t\subset \mathcal R(I):_{\mathcal R(I)}\mathcal R(K)$; and therefore, $\mathfrak m\subset \operatorname{rad} (\mathcal R(I):_{\mathcal R(I)}\mathcal R(K))$. Clearly, $A':_{A'}C'\subset \mathcal R(I):_{\R(I)}\R(K)$ because $\R(I)=R[A']$ and $\R(K)=R[C']$.

We prove the inclusion ``$\subset$'' for the ideals of (\ref{eq-rad}). Let $\alpha$ be a bi-homogeneous element of $\mathcal R(I):_{\mathcal R(I)}\mathcal R(K)$. In particular,   $\alpha=ft^j$ for some homogeneous form $f$ in $I$ and some non-negative integer $j$. If the homogeneous form $f$ has degree $i+dj$ as a polynomial in 
$R=k[x_1,\dots,x_s]$, then the bi-degree of $\alpha$ in $\mathcal R(I)$ is $(i,j)$. If $i$ is positive, then $\alpha$ is in $\mathfrak m\mathcal R(I)$. If $i=0$, then $\alpha$ is in $A'$ and $\alpha\cdot C'\subset \alpha \cdot\mathcal R(K)\subset \mathcal R(I)$. One may compute $x$-degree to see that $\alpha \cdot C'$ is actually contained in $A'$. In either case, $\alpha\in  (\mathfrak m, A':_{A'}C')\mathcal R(I)$. 

Now that the assertion of (\ref{eq-rad}) has been established, we complete the proof of the result by applying the natural quotient  homomorphism $\mathcal R(I)\to \mathcal R(I)/\mathfrak m\mathcal R(I)$
to the ideals of (\ref{eq-rad}). The 
ring $\R(I)$ is a domain and a finitely generated algebra over a field, and the 
ideal $\mathfrak m\mathcal R(I)$ of $\mathcal R(I)$ 
has height $1$. 
Apply Proposition \ref{conductors}, with $K=\m^d$ to complete the proof. \end{proof}

In the situation of Data~\ref{d1},  $A$ is the homogeneous coordinate ring of the image of $\Psi$. The following geometric consequence of Corollary~\ref{R-sub-i} is now immediate.
\begin{corollary}\label{iso-singu}Adopt the data of {\rm \ref{d1}}.  In addition assume that $I$ is $\m$-primary.
Then the Rees ring $\R(I)$ has an isolated singularity if and only if 
the image of \,$\Psi$ is smooth  and $\Psi$ is birational onto its image.
\end{corollary}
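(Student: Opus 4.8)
The plan is to derive Corollary~\ref{iso-singu} directly from Corollary~\ref{R-sub-i} by recognizing that ``isolated singularity'' and ``smooth image'' are the geometric incarnations of the Serre conditions $(R_i)$ for all $i$, together with the birationality statement. The key observation is that a Noetherian local ring (or a variety) has an isolated singularity precisely when it satisfies $(R_i)$ for all $i$ strictly less than its dimension; equivalently, the singular locus has codimension equal to the dimension, i.e.\ is just the irrelevant point. So I would translate both sides of the desired equivalence into the language of Serre conditions and then invoke Corollary~\ref{R-sub-i} with the appropriate index.

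First I would unwind the left-hand side. Since $\R(I)$ is a finitely generated graded $k$-algebra that is a domain of dimension $s+1$ (by Remark~\ref{R1}), saying $\R(I)$ has an \emph{isolated singularity} means $\R(I)_Q$ is regular for every prime $Q$ of positive dimension, equivalently for every prime with $\dim \R(I)_Q \le s$. In the notation of Corollary~\ref{R-sub-i}, this is exactly the condition that $\R(I)$ satisfies $(R_s)$, where $s$ is the largest relevant index since $\dim \R(I)=s+1$. Next I would unwind the right-hand side similarly: $A$ is a standard graded domain of dimension $s$ (because $I$ is $\m$-primary forces $\dim A = s$), so its associated projective variety $\operatorname{Im}\Psi = \Proj A$ is \emph{smooth} precisely when $A_q$ is regular for every homogeneous prime $q$ of positive dimension, i.e.\ when $A$ satisfies $(R_{s-1})$.

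Then I would simply apply Corollary~\ref{R-sub-i} with the fixed positive integer $i=s$. That corollary states that $\R(I)$ satisfies $(R_s)$ if and only if $A$ satisfies $(R_{s-1})$ and $A\subset B$ is a birational extension. Invoking the translations above, the former is ``$\R(I)$ has an isolated singularity'' and the latter pair is ``$\operatorname{Im}\Psi$ is smooth and $\Psi$ is birational onto its image'' (using that $A\subset B$ birational is precisely birationality of $\Psi$ onto its image, as recorded in Data~\ref{d1} and Remark~\ref{degree}). This yields the asserted equivalence.

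\textbf{The main obstacle} I anticipate is justifying cleanly that ``$(R_s)$ for $\R(I)$'' and ``$(R_{s-1})$ for $A$'' are the correct reformulations of the geometric smoothness statements, in particular handling the irrelevant maximal ideals correctly. On the Rees side one must note that $\m\R(I)$ is a height-one prime, so the high-dimensional non-regular locus is confined appropriately; on the $A$ side one must check that regularity of $\Proj A$ at all closed points corresponds exactly to $(R_{s-1})$ of the cone $A$ (regularity away from the vertex), rather than accidentally imposing regularity of $A$ at its vertex. These are standard facts relating the Serre condition $(R_j)$ on a graded ring to regularity of its punctured spectrum up to the appropriate codimension, but they require stating precisely which primes are being tested so that the index $i=s$ in Corollary~\ref{R-sub-i} lines up correctly with ``isolated singularity'' on one side and ``smooth image'' on the other.
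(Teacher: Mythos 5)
Your proposal is correct and follows exactly the route the paper intends: the paper presents Corollary~\ref{iso-singu} as an immediate consequence of Corollary~\ref{R-sub-i}, and your application of that corollary with $i=s$, together with the standard translations ``$\R(I)$ has an isolated singularity $\iff$ $\R(I)$ satisfies $(R_s)$'' (since $\dim\R(I)=s+1$) and ``$\operatorname{Im}\Psi$ smooth $\iff$ $A$ satisfies $(R_{s-1})$'' (since $\dim A=s$ and the singular locus of a graded ring is defined by a homogeneous ideal), is precisely the intended argument.
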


For $s=2$ we harvest consequences about canonical modules, $S_2$-fications, and  cores for any ideal generated by forms of the same degree. Indeed, when $s=2$ we can pass by way of a faithfully flat descent to the birational situation according to Corollary~\ref{H}. We compute canonical modules, $S_2$-fications, and  cores in the birational situation using Corollary~\ref{C2} and Remark~\ref{canonical}. We can pull  these objects back  because  they are preserved under faithfully flat extensions, whereas adjoint ideals are not preserved under such extensions. 

\begin{theorem}\label{core}  Adopt the data of {\rm \ref{d2}}. In addition assume  that $R=k[x_1, x_2]$ is a polynomial ring in two variables over an infinite field and $I$ is $\m$-primary. Write  $r=[B:A]$ and let 
$f_1$ and $f_2$ be the forms in $R$ of degree $r$ of  Corollary~\ref{H}.  The following statements hold.\begin{enumerate}[\quad\rm(1)]
\item $\omega_{\R(I)}=\omega_{\R((f_1,f_2)^{d/r})}=f_1^2t(f_1,f_2)^{\frac{d}{r}-1}\R((f_1,f_2)^{d/r})$.
\item The $S_2$-fication of the Rees ring $\R(I)$  is  \[{\rm End}\, (\omega_{\R(I)})=\R((f_1,f_2)^{d/r})\, , \]which is a Cohen-Macaulay ring. 
\item $e(A)=\frac{d}{r}$.
\item ${\rm gradedcore}(I)={\rm core}(I)=(f_1,f_2)^{2\frac{d}{r}-1}$.
\item If $\R(I)$ satisfies Serre's property $(S_2)$, then $\R(I)$ is Cohen-Macaulay.
\end{enumerate}
\end{theorem}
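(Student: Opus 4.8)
The plan is to reduce to the birational case treated in Corollary~\ref{C2} by means of the faithfully flat descent furnished by Corollary~\ref{H}. Set $R'=k[f_1,f_2]$ and $I'=I_dR'$, and write $D:=d/r$. By Corollary~\ref{H} the ring $R'$ is a standard graded polynomial ring in two variables, $R$ is a free (hence faithfully flat) $R'$-module of rank $r^2$ with $I=I'R$, the ideal $I'$ is generated by forms of degree $D$, and the extension $A=k[I'_{D}]\subset k[R'_{D}]$ is birational. Because $R$ is flat over $R'$ and $I=I'R$, one has $(I')^nR=I^n$ for all $n$ and therefore $\R(I)=\R(I')\otimes_{R'}R$; the fiber of $R'\to R$ over the maximal homogeneous ideal $\m'=(f_1,f_2)R'$ is $R/(f_1,f_2)R$, an Artinian complete intersection, hence Gorenstein. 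The first thing I would check is that $I'$ is $\m'$-primary: since $R/I=(R'/I')\otimes_{R'}R$ is Artinian and $R$ is module-finite and faithfully flat over $R'$, the ring $R'/I'$ is Artinian as well.

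Next I would compute everything in the birational setting of $R',I'$. Part~(3) is immediate and independent of this: Observation~\ref{der}, with $s=2$ and $[B:A]=r$, gives $d=e(A)\cdot r$, so $e(A)=d/r$. For the canonical module, Corollary~\ref{C2}(3) applied to $R',I'$ (legitimate, since $I'$ is $\m'$-primary, $k$ is infinite, and $A\subset k[R'_{D}]$ is birational) yields $\omega_{\R(I')}=\omega_{\R(\m'^{D})}$, and Remark~\ref{canonical}, specialized to the two-variable polynomial ring $R'=k[f_1,f_2]$, computes $\omega_{\R(\m'^{D})}=f_1^2t\,(f_1,f_2)^{D-1}\R(\m'^{D})$ (the two cases $D\ge 2$ and $D=1$ of that remark both collapse to this single formula when the number of variables is $2$). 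In the same birational setting, Remark~\ref{canonical} gives $\End(\omega_{\R(\m'^{D})})=\R(\m'^{D})$ together with the Cohen-Macaulayness of $\R(\m'^{D})$, while Corollary~\ref{C2}(6) gives $\operatorname{gradedcore}(I')=\core(I')=\m'^{2D-1}=(f_1,f_2)^{2D-1}$.

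Finally I would descend these identities along the faithfully flat map $R'\to R$. Since this map is flat with Gorenstein closed fiber, the canonical module base-changes, $\omega_{\R(I)}=\omega_{\R(I')}\otimes_{R'}R$, and $\R(\m'^{D})\otimes_{R'}R=\R((f_1,f_2)^{D}R)$; substituting the formula above proves~(1), the first equality coming from $\omega_{\R(I')}=\omega_{\R(\m'^{D})}$ after base change. Formation of the endomorphism ring commutes with this flat base change, and Cohen-Macaulayness is preserved by a flat extension with Cohen-Macaulay fibers, which yields~(2). The core is likewise preserved under the faithfully flat extension $R'\subset R$---this is the crucial point emphasized in the introduction, and it is exactly where adjoint ideals would fail---so $\core(I)=\core(I')R=(f_1,f_2)^{2D-1}R$, and similarly for the graded core, proving~(4). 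For~(5), if $\R(I)$ satisfies $(S_2)$ then so does $\R(I')$, because $(S_2)$ descends to the base along any faithfully flat map; then Corollary~\ref{C2}(7) makes $\R(I')$ Cohen-Macaulay, and $\R(I)=\R(I')\otimes_{R'}R$ is Cohen-Macaulay by flat ascent. The main obstacle is precisely the descent of these invariants: one must verify carefully that the canonical module, the $S_2$-ification, and especially the core are preserved under the free extension $R'\subset R$, exploiting the Gorenstein (Artinian complete intersection) nature of its fiber, and---for the core---routing the argument through the canonical module of the (extended) Rees ring rather than through adjoints.
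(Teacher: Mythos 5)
Your proposal is correct and follows essentially the same route as the paper: reduce to the birational situation over $R'=k[f_1,f_2]$ via Corollary~\ref{H}, apply Corollary~\ref{C2} and Remark~\ref{canonical} there, and transfer the canonical module, $S_2$-ification, and core back along the flat extension $R'\subset R$ with Gorenstein fibers (the paper cites \cite[2.1]{PUV} and \cite[4.8]{CPU1} for the descent of the core, which you correctly flag as the delicate step).
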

\begin{proof} We write $R'=k[f_1,f_2]$ and $I'=I\cap R'$. By Corollary~\ref{H} the ideal $I$ is extended from $I'$, i.e. $I=I'R$. We think of $R'$ as a standard graded polynomial ring in the variables $f_1$ and $f_2$ with maximal homogeneous ideal $\m'$. The ideal $I'$ is generated by forms of degree $d/r$ in this ring. From Corollary~\ref{H} we know that the extension $k[I'_{d/r}]\subset k[R'_{d/r}]$ is birational . Now Corollary~\ref{C2} 
and Remark~\ref{canonical} imply the asserted statements for the ideals $I'\subset (\m')^{d/r}=(f_1,f_2)^{d/r}R'$ in $R'$. To pass back to the ideals $I=I'R \subset (f_1,f_2)^{d/r}R$ in $R$ we use the fact that the map $R'\subset R$ is  flat with Gorenstein fibers. We remind the reader that 
\[\core(I)=\core(I'R)=\core(I')R\, ,\]
according to \cite[2.1]{PUV}  and \cite[4.8]{CPU1}. 
\end{proof}

Remarkably, if $s=2$ then all the statements of Corollary~\ref{C2} are equivalent. More specifically, the  integral  closedness of the core, which is a single graded component of the canonical module of the Rees ring, forces the shape of the entire canonical module. 
 
\begin{corollary}\label{C3}Adopt the data of {\rm \ref{d1}}.   In addition assume that $I$ is $\m$-primary, $s=2$, and the field $k$ is infinite.
%
Statements {\rm(1)\,--\,(8)} are equivalent.
\begin{itemize}
\item[\rm(1)] The morphism  $\, \Psi$ is birational onto its image, i.e. $[B:A]=1$. 
\item[\rm(2)] The Rees ring $\R(I)$ satisfies Serre's condition $(R_1)$. 
\item[\rm(3)] $\omega_{\R(I)}= \omega_{\R(\m^d)}$.
\item[\rm(4)]  $\End_{\R(I)}(\omega_{\R(I)})=  \End_{\R(\m^d)}(\omega_{\R(\m^d)})$.
\item[\rm(5)] $e(A)=d$.
\item[\rm(6)] ${\rm core}(I)=\m^{2d-1}$.
\item[\rm(7)] ${\rm core}(I)={\rm
 adj}(I^{2})$.
 \item[\rm(8)] The ideal ${\rm core}(I)$ is integrally closed.
\end{itemize}
  Furthermore, statements {\rm(1)\,--\,(8)} are all  implied by
\begin{enumerate}
\item[{\rm (9)}]  $\gcd($column degrees of $\varphi)=1$.
\end{enumerate}
\end{corollary}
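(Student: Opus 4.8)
The plan is to lean on Corollary~\ref{C2}, which, specialized to $s=2$ (so that $d^{s-1}=d$ and $I^s=I^2$), already supplies the equivalence of statements (1)--(5) and shows that any of them forces the chain $\operatorname{core}(I)=\mathfrak m^{2d-1}=\operatorname{adj}(I^2)$, that is, (6) and (7). Thus the only genuinely new content is the converse, and I would route it entirely through the integral-closedness condition (8). Since $\mathfrak m^{2d-1}$ is a power of the maximal homogeneous ideal of a polynomial ring it is integrally closed, giving (6)$\Rightarrow$(8); and since adjoint (equivalently, multiplier) ideals are always integrally closed, also (7)$\Rightarrow$(8). Hence it suffices to establish the single implication (8)$\Rightarrow$(1); once that is done, the cycle $(1)\Rightarrow\{(6),(7)\}\Rightarrow(8)\Rightarrow(1)$ closes and merges with (1)--(5). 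Finally, the implication (9)$\Rightarrow$(1) is exactly Corollary~\ref{corn}(3) and requires no further work.

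The heart of the matter is (8)$\Rightarrow$(1), and here I would use the \emph{unconditional} formulas of Theorem~\ref{core}, which hold for every $\mathfrak m$-primary $I$ irrespective of birationality. Writing $r=[B:A]$ and $e=e(A)$, Theorem~\ref{core}(3)--(4) give $e=d/r$ and $\operatorname{core}(I)=(f_1,f_2)^{2d/r-1}=(f_1,f_2)^{2e-1}$, where $f_1,f_2$ is a regular sequence of forms of degree $r$ by Corollary~\ref{H}. The key computation is the integral closure of this ideal. Because $(f_1,f_2)\subseteq\mathfrak m^r$ are both $\mathfrak m$-primary with the same Hilbert--Samuel multiplicity $r^2$ (the first since $f_1,f_2$ is a complete intersection of two degree-$r$ forms, the second since $e(\mathfrak m^r)=r^2e(\mathfrak m)=r^2$), Rees's multiplicity criterion for integral dependence gives $\overline{(f_1,f_2)}=\overline{\mathfrak m^r}=\mathfrak m^r$. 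Passing to powers via $(\overline{J})^{m}\subseteq\overline{J^{m}}$ together with $J^{m}\subseteq(\mathfrak m^r)^m$, one obtains
\[
\overline{\operatorname{core}(I)}=\overline{(f_1,f_2)^{2e-1}}=\mathfrak m^{r(2e-1)}=\mathfrak m^{2d-r}.
\]

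With this in hand I would read off birationality from a dimension count in the single degree $2d-r$. The ideal $(f_1,f_2)^{2e-1}$ is generated by the monomials $f_1^if_2^{2e-1-i}$, $0\le i\le 2e-1$, and these are $k$-linearly independent because $k[f_1,f_2]\hookrightarrow R$; hence its degree-$(2d-r)$ component has dimension $2e=2d/r$, whereas $[\mathfrak m^{2d-r}]_{2d-r}$ has dimension $2d-r+1$. Hypothesis (8) forces $\operatorname{core}(I)=\overline{\operatorname{core}(I)}=\mathfrak m^{2d-r}$, so $2d/r=2d-r+1$; substituting $d=re$ simplifies this to $(r-1)(2e-1)=0$, and since $e\ge 1$ we conclude $r=1$, which is precisely (1). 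I expect this implication to be the main obstacle: everything hinges on applying Theorem~\ref{core} verbatim in the \emph{non-birational} case and on an airtight integral-closure computation, since the whole surprise of the statement---that integral closedness of one graded component of $\omega_{\R(I)}$ already forces birationality---rests on it. The remaining implications are either citations (Corollaries~\ref{C2} and~\ref{corn}) or the elementary facts that powers of $\mathfrak m$ and adjoint ideals are integrally closed.
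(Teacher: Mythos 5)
Your proposal is correct and follows essentially the same route as the paper: Corollary~\ref{C2} handles (1)--(5) and (1)$\Rightarrow$(6),(7), the implications (6),(7)$\Rightarrow$(8) are immediate, (9)$\Rightarrow$(1) is Corollary~\ref{corn}(3), and the crux (8)$\Rightarrow$(1) is extracted from the unconditional formula $\core(I)=(f_1,f_2)^{2d/r-1}$ of Theorem~\ref{core} followed by the observation that integral closedness forces this ideal to be $\m^{2d-r}$, which is numerically impossible unless $r=1$. The only difference is cosmetic: you justify $\overline{(f_1,f_2)^{2e-1}}=\m^{2d-r}$ explicitly via Rees's multiplicity criterion and compare dimensions of the degree-$(2d-r)$ components, where the paper compares minimal numbers of generators; both give $(r-1)(2e-1)=0$.
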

\begin{proof} Corollary~\ref{C2} shows that items (1)~--~(5) are equivalent and that they imply (6) and (7). Item (8) follows immediately from (6) or (7). We prove that (8) implies (1). From Theorem~\ref{core}, we have ${\rm core}(I)=(f_1,f_2)^{2\frac{d}{r}-1}$. Hence  $\core(I)$ is an $\m$-primary ideal generated by $2\frac{d}{r}$ forms of degree $2d-r$. If such an ideal is integrally closed, it would have to be $\m^{2d-r}$, which is minimally generated by $2d-r+1$ forms. This forces $r=1$.  Finally we appeal to Corollary~\ref{corn}(3) to see that (9) implies (1). 
\end{proof}

\begin{remarks}\begin{enumerate}[\rm(a)] \item If $I$ is generated by monomials then statements {\rm(1)\,--\,(9)} of Corollary ~\ref{C3} are all equivalent according to Observation~\ref{min-ideal-col-deg}.\item When  statements {\rm(1)\,--\,(8)} of Corollary ~\ref{C3} hold, then it follows from
 Corollary~\ref{C2}  that all of the equalities
$${\rm gradedcore}(I)={\rm core}(I)={\rm core}(\m^d)=\m^{2d-1}={\rm
 adj}(I^{2})$$
hold.
\item  Angela Kohlhaas \cite{K10} has proven that if  $I \subset k[x_1, \ldots, x_s]$ is an $\m$-primary  monomial ideal which has a 
reduction generated by $s$ monomials, then the following three conditions are equivalent.
 \begin{itemize}
\item[(2)]  $R[It]$ satisfies Serre's condition $R_1$.
\item[(7)] $\core(I)={\rm adj}(I^s)$.
\item[(8)]   $\core(I)$ is integrally
closed.
 \end{itemize}\end{enumerate}
\end{remarks}

\bigskip\noindent{\bf Acknolwedgment.} This paper was written at
 Centre International de Rencontres Math\'ematiques (CIRM) in Luminy, France, while the authors participated in a Petit Groupe de Travail. The authors are very appreciative of the hospitality offered by the Soci\'et\'e Math\'ematique de France.


\begin{thebibliography}{99}

\bibitem{AM}{R. Achilles and M.  Manaresi, {\em Multiplicity for ideals of maximal analytic spread and intersection theory}, J. Math. Kyoto Univ. {\bf 33} (1993),  1029--1046.}

\bibitem{BD}{T. Cortadellas Benitez and C. D'Andrea, {\it Rational plane curves parameterizable by conics}, J. Algebra \textbf{373} (2013), 453--480.}

\bibitem{BD13} T. Cortadellas Benitez and C. D'Andrea, {\em Minimal Generators of the Defining Ideal of the Rees Algebra Associated with a Rational Plane Parametrization with $\mu=2$}, Canad. J. Math. {\tt http://dx.doi.org/10.4153/CJM-2013-035-1}

\bibitem{BBC} N.~Botbol, L.~Bus\'e, and M.~Chardin, {\em Fitting ideals and multiple-points of surface parameterizations}, {\tt http://front.math.ucdavis.edu/1310.4915}



 \bibitem{BH}
W.~Bruns and J.~Herzog, {\em Cohen-Macaulay Rings}, Cambridge Studies in Advanced Mathematics {\bf 39}, Cambridge University Press, Cambridge, 1993. 

\bibitem{BS}{J. Brian{\c{c}}on and H. Skoda,
Sur la cl\^oture int\'egrale d'un id\'eal de germes de fonctions
holomorphes en un point de ${\bf {C}}\sp{n}$, C. R. Acad. Sci. Paris
S\'er. A  {\bf 278} (1974), 949-951.}


\bibitem{bu}{L. Bus\'{e}, {\it On the equations of the moving curve ideal of a rational
algebraic plane curve}, J. Algebra {\bf 321} (2009), 2317--2344.}

\bibitem{BB}  L.~Bus\'e and T. Ba, {\em Matrix-based implicit representations of rational algebraic curves and applications}, 
Comp. Aided Geom. Design \textbf{27} (2010),  681--699.
 
\bibitem{BD'A}  L.~Bus\'e and C. D'Andrea, {\em
Singular factors of rational plane curves},
J. Algebra \textbf{357} (2012), 322--346. 

\bibitem{C06} M.~Chardin, {\em Implicitization using approximation complexes}, Algebraic geometry and geometric modeling, 23--35, Math. Vis., Springer, Berlin, 2006.



\bibitem{CWL}{ F. Chen, W. Wang, and Y. Liu, {\it Computing singular points of plane rational curves},
J. Symbolic Comput. {\bf 43} (2008),  92--117.}



\bibitem{CPU1} A.~Corso, C.~Polini and B.~Ulrich, {\em The structure of the core of ideals}, Math. Ann. \textbf {321} (2001), 89--105.




\bibitem{CPU2} A.~Corso, C.~Polini and B.~Ulrich,
{\em Core and residual intersections of ideals}, Trans. Amer. Math.
Soc. \textbf{354} (2002), 2579--2594.


\bibitem{4} A.~Corso, C.~Polini, and B.~Ulrich, {\em The core of projective dimension one modules}, Manuscripta Math. \textbf{111} (2003),
427--433.

\bibitem{CHW}{D. Cox, J. W. Hoffman, and H. Wang, {\it  Syzygies and the Rees algebra},
J. Pure Appl. Algebra {\bf 212} (2008), 1787--1796.}

\bibitem{CKPU} D.~Cox, A.~Kustin, C.~Polini, and B.~Ulrich, {\em A study of singularities on rational curves via syzygies}, Mem. Amer. Math. Soc. \textbf{222}, 2013.

\bibitem{DHS} A.~Doria, S.~H.~Hassanzadeh, and A.~Simis, {\em A 
characteristic-free criterion of birationality}, Adv. Math. \textbf{230} (2012),  390--413. 

\bibitem{EHU} D.~Eisenbud, C.~Huneke, and B.~Ulrich, {\em A simple proof of some generalized principal ideal theorems}, Proc. Amer. Math. Soc.  \textbf{129}  (2001), 2535--2540.


\bibitem{EU} D.~Eisenbud and B.~Ulrich, {\em Row ideals and fibers of morphisms}, Mich. Math. J. \textbf{57} (2008), 261--268. 

\bibitem{FPU} L.~Fouli, C.~Polini, and B.~Ulrich, {\em The core of ideals in arbitrary characteristic}, Mich. Math. J. \textbf{57} (2008), 305--319.

\bibitem{G} M. Green, {\em
The Eisenbud-Koh-Stillman conjecture on linear syzygies}, 
Invent. Math. \textbf{136} (1999), 411--418. 




\bibitem{H03}C.~Hacon, {\em
Effective criteria for birational morphisms},
J. London Math. Soc. (2) \textbf{67} (2003),  337--348. 


\bibitem{HH}  M.~Hochster and C.~Huneke, {\em Indecomposable canonical modules and connectedness},  Commutative Algebra: Syzygies, Multiplicities, and Birational Algebra,
(South Hadley, MA, 1992), Contemp. Math. {\textbf {159}}, American Mathematical Society, Providence, RI, 1994;
197--208.




\bibitem{HSV} {J. Hong, A. Simis, and W. V. Vasconcelos,  {\it	On the homology of two-dimensional elimination},
J.  Symbolic Comput. {\bf 43} (2008), 275--292.}  

\bibitem{HSV12} {J. Hong, A. Simis, and W. V. Vasconcelos,  {\it The equations of almost complete intersections}, Bull. Braz. Math. Soc. (N.S.) {\bf 43} (2012),   171--199.}



\bibitem{HKS} K.~Hulek, S.~Katz, F.-O.~Schreyer, {\em Cremona transformations and syzygies}, Math. Z. \textbf{209} (1992) 419--443.



\bibitem{11} C.~Huneke and I.~Swanson, {\em Cores of ideals in 2-dimensional regular local rings}, Michigan Math. J. \textbf{42} (1995), 193--208.

\bibitem{HT} C.~Huneke and N.V.~Trung, {\em
On the core of ideals},
Compos. Math. \textbf{141} (2005),  1--18. 

\bibitem{14} E.~Hyry and K.~Smith, {\em On a non-vanishing conjecture of Kawamata and the core of an ideal}, Amer. J. Math. \textbf{125}
(2003), 1349--1410.

\bibitem{14.5} E.~Hyry and K.~Smith, {\em Core versus graded core, and global sections of line bundles}, Trans. Amer. Math. Soc. \textbf{356} (2004), 3143--3166.


\bibitem{JM} J.~Jeffries and J.~Monta\~{n}o, {\em The j-multiplicity of monomial ideals}. Math. Res. Lett. {\bf 20} (2013), 729--744. 
\bibitem{JMV} J.~Jeffries, J.~Monta\~{n}o, and M.~Varbaro, {\em Multiplicities of classical varieties}, preprint 2014.


\bibitem{K}S. Kleiman, {\em Tangency and duality}, in Proc. 1984 Vancouver Conf. in Algebraic Geometry (eds. J. Carrell, A. V. Geramita and P. Russell),  CMS Conf. Proc. {\bf 6} (Amer. Math. Soc. 1986), pp. 163--226.


\bibitem{K10} A.~Kohlhaas, {\em 
The core of an ideal and its relationship to the adjoint and coefficient ideals},
Ph.D. Thesis, University of Notre Dame, 2010. 

\bibitem{K93} J.~Koll\'ar, {\em Shafarevich maps and plurigenera of algebraic varieties}, Invent. Math. \textbf{113} (1993),  177--215.

\bibitem{IEaI} A.~Kustin, C.~Polini, and B.~Ulrich, {\em Integral extensions and the a-invariant}, J. Pure Appl. Algebra \textbf{216} (2012),   2800--2806.



\bibitem{BSSA} A.~Kustin, C.~Polini, and B.~Ulrich, {\em The bi-graded structure of
Symmetric Algebras with applications to Rees rings}, {\tt http://front.math.ucdavis.edu/1301.7106}


\bibitem{KPUGor}A.~Kustin, C.~Polini, and B.~Ulrich, {\em Matrices of linear forms which are annihilated by a vector of indeterminates, with a view towards blowup algebras of Gorenstein ideals}, in preparation. 



\bibitem{La} {R.~Lazarsfeld, {\em Positivity in algebraic geometry. II.
Positivity for vector bundles, and multiplier ideals}, Ergebnisse der Mathematik und ihrer Grenzgebiete. 3. Folge,  \textbf{49} Springer-Verlag, Berlin, 2004.} 

\bibitem{L}  J.~Lipman, {\em Adjoints of ideals in regular local rings}, Math. Res. Lett. \textbf{1} (1994),  739--755. 

\bibitem{N} M.~Nagata, {\em Local rings},  Robert E. Krieger Publishing Co., Huntington, N.Y., 1975.

\bibitem{NU} K.~Nishida and B.~Ulrich, {\em Computing j-multiplicities}, J. Pure Appl. Algebra {\bf 214} (2010),  2101--2110. 


\bibitem{P} R. ~Piene, {\em Polar classes of singular varieties}, Ann. Sci. Ecole Norm. Sup. {\bf 11} (1978), 247--276.


\bibitem{PU} C.~Polini and B.~Ulrich, {\em A formula for the core of an ideal},
Math. Ann. \textbf{331} (2005), 487--503.








\bibitem{PUV} C.~Polini, B.~Ulrich, and M.~Vitulli, {\em The core of zero-dimensional monomial
ideals}, Adv. Math. {\textbf 211} (2007), 72--93.

\bibitem{18} D.~Rees and J.~D.~Sally, {\em General elements and joint reductions}, Michigan Math. J. \textbf{35} (1988), 241--254.


\bibitem{RS} F.~Russo and A.~Simis, {\em On birational maps and Jacobian matrices}, Compos. Math. \textbf{126} (2001), 335--358.

\bibitem{Si} A.~Simis, {\em Cremona transformations and some related algebras}, J. Algebra \textbf{280} (2004), 162--179.

\bibitem{SUV} A.~Simis, B.~Ulrich, and W.~Vasconcelos, {\em Codimension, multiplicity and integral extensions}, Math. Proc. Cambridge Philos. Soc. \textbf{130} (2001), 237--257.


\bibitem{SCG}{N.
Song,  F. Chen, and R. Goldman, {\it
Axial moving lines and singularities of rational planar curves},
Comput. Aided Geom. Design {\bf 24} (2007),   200--209.}


\bibitem{SH} I.~Swanson and C.~Huneke, {\em Integral closure of ideals, rings, and modules}, London Mathematical Society Lecture Note Series {\textbf {336}}, Cambridge University Press, Cambridge, 2006.





\bibitem{Va}{J. Validashti, {\em Relative multiplicities of graded algebras}, J. Algebra  {\bf 333} (2011), 14--25. }

\bibitem{Xie}{Y. Xie, {\em Formulas for the multiplicity of graded algebras}, Trans. Amer. Math. Soc. {\bf 364} (2012), 4085--4106.}


\end{thebibliography}
\end{document}